\newcommand\blfootnote[1]{%
  \begingroup
  \renewcommand\thefootnote{}\footnote{#1}%
  \addtocounter{footnote}{-1}%
  \endgroup
}
\title[Mixed local and nonlocal double phase functionals]{Regularity results for mixed local and nonlocal double phase functionals}
\author[Byun]{Sun-Sig Byun}
\address{Department of Mathematical Sciences and Research Institute of Mathematics,
Seoul National University, Seoul 08826, Korea}
\email{byun@snu.ac.kr}
\author[Lee]{Ho-Sik Lee}
\address{Department of Mathematical Sciences,
Seoul National University, Seoul 08826, Korea}
\email{lshnsu92@snu.ac.kr}
\author[Song]{Kyeong Song}
\address{Department of Mathematical Sciences,
	Seoul National University, Seoul 08826, Korea}
\email{kyeongsong@snu.ac.kr}
\subjclass[2020]{Primary: 49N60; Secondary: 35R11, 47G20, 35B65, 35R05}
 \keywords{Mixed local and nonlocal functionals, Double phase, Local boundedness, H\"{o}lder continuity, Harnack's inequality}
\newtheorem{theorem}{Theorem}[section]
\newtheorem{lemma}[theorem]{Lemma}
\theoremstyle{definition}
\newtheorem{definition}[theorem]{Definition}
\newtheorem{remark}[theorem]{Remark}
\numberwithin{equation}{section}
\def\eqn#1$$#2$${\begin{equation}\label#1#2\end{equation}}
\def\charfn_#1{{\raise1.2pt\hbox{$\chi_{\kern-1pt\lower3pt\hbox{{$\scriptstyle#1$}}}$}}}
\newcommand{\pushright}[1]{\ifmeasuring@#1\else\omit\hfill$\displaystyle#1$\fi\ignorespaces}
\newcommand{\pushleft}[1]{\ifmeasuring@#1\else\omit$\displaystyle#1$\hfill\fi\ignorespaces}
\def\ep{\varepsilon}
\newcommand{\tail}{{\mathrm{Tail}}}
 \DeclareMathOperator*{\osc}{osc}
\newcommand{\data}{\mathtt{data}}
\def\er{\mathbb R}
\newcommand{\ern}{\mathbb{R}^n}
\def\loc{{\operatorname{loc}}}
\newcommand{\supp}{{\rm supp}}
\def\mean#1{\mathchoice%
          {\mathop{\kern 0.2em\vrule width 0.6em height 0.69678ex depth -0.58065ex
                  \kern -0.8em \intop}\nolimits_{\kern -0.4em#1}}%
          {\mathop{\kern 0.1em\vrule width 0.5em height 0.69678ex depth -0.60387ex
                  \kern -0.6em \intop}\nolimits_{#1}}%
          {\mathop{\kern 0.1em\vrule width 0.5em height 0.69678ex
              depth -0.60387ex
                  \kern -0.6em \intop}\nolimits_{#1}}%
          {\mathop{\kern 0.1em\vrule width 0.5em height 0.69678ex depth -0.60387ex
                  \kern -0.6em \intop}\nolimits_{#1}}}
\def\vintslides_#1{\mathchoice%
          {\mathop{\kern 0.1em\vrule width 0.5em height 0.697ex depth -0.581ex
                  \kern -0.6em \intop}\nolimits_{\kern -0.4em#1}}%
          {\mathop{\kern 0.1em\vrule width 0.3em height 0.697ex depth -0.604ex
                  \kern -0.4em \intop}\nolimits_{#1}}%
          {\mathop{\kern 0.1em\vrule width 0.3em height 0.697ex depth -0.604ex
                  \kern -0.4em \intop}\nolimits_{#1}}%
          {\mathop{\kern 0.1em\vrule width 0.3em height 0.697ex depth -0.604ex
                  \kern -0.4em \intop}\nolimits_{#1}}}
\newcommand{\aveint}[2]{\mathchoice%
          {\mathop{\kern 0.2em\vrule width 0.6em height 0.69678ex depth -0.58065ex
                  \kern -0.8em \intop}\nolimits_{\kern -0.45em#1}^{#2}}%
          {\mathop{\kern 0.1em\vrule width 0.5em height 0.69678ex depth -0.60387ex
                  \kern -0.6em \intop}\nolimits_{#1}^{#2}}%
          {\mathop{\kern 0.1em\vrule width 0.5em height 0.69678ex depth -0.60387ex
                  \kern -0.6em \intop}\nolimits_{#1}^{#2}}%
          {\mathop{\kern 0.1em\vrule width 0.5em height 0.69678ex depth -0.60387ex
                  \kern -0.6em \intop}\nolimits_{#1}^{#2}}}
\newtoks\by
\newtoks\paper
\newtoks\book
\newtoks\jour
\newtoks\yr
\newtoks\pages
\newtoks\vol
\newtoks\publ
\def\ota{{\hbox{\bf ???}}}
\def\cLear{\by=\ota\paper=\ota\book=\ota\jour=\ota\yr=\ota
\pages=\ota\vol=\ota\publ=\ota}
\def\endpaper{\the\by, \textit{\the\paper},
{\the\jour} \textbf{\the\vol} (\the\yr), \the\pages.\cLear}
\def\endbook{\the\by, \textit{\the\book},
\the\publ, \the\yr.\cLear}
\def\endpap{\the\by, \textit{\the\paper}, \the\jour.\cLear}
\def\endproc{\the\by, \textit{\the\paper}, \the\book, \the\publ,
\the\yr, \the\pages.\cLear}
\begin{document}
\maketitle
\begin{abstract}
We investigate the De Giorgi-Nash-Moser theory for minimizers of mixed local and nonlocal functionals modeled after
\[ v \mapsto \int_{\mathbb{R}^{n}}\int_{\mathbb{R}^{n}}\dfrac{|v(x)-v(y)|^{p}}{|x-y|^{n+sp}}\,dxdy+\int_{\Omega}a(x)|Dv|^{q}\,dx, \]
where $0<s<1<p \le q$ and $a(\cdot) \ge 0$. 
In particular, we prove H\"older regularity and Harnack's inequality under possibly sharp assumptions on $s,p,q$ and $a(\cdot)$.
\end{abstract}

\blfootnote{S. Byun was supported by NRF-2022R1A2C1009312. H. Lee
was supported by NRF-2020R1C1C1A01013363. K. Song was supported by NRF-2021R1A4A1027378}

\section{Introduction}
We study a mixed local and nonlocal functional whose prototype is 
\begin{align}\label{eq:ftnal0}
\mathcal{E}_0(v;\Omega) \coloneqq \iint_{\mathcal{C}_{\Omega}}\dfrac{|v(x)-v(y)|^{p}}{|x-y|^{n+sp}}\,dxdy+\int_{\Omega}a(x)|Dv|^{q}\,dx,
\end{align}
where $\Omega\subset\ern$ $(n\geq 2)$ is a bounded domain,
\begin{align*}
\mathcal{C}_{\Omega} \coloneqq (\ern\times\ern)\setminus((\ern\setminus\Omega)\times(\ern\setminus\Omega)),
\end{align*}
and 
\begin{align}\label{eq:spq}
s \in (0,1), \quad 1<p \le q.
\end{align}
Here, the modulating coefficient function $a:\Omega\rightarrow\er$ is measurable and bounded such that
\begin{align}\label{eq:a}
0\leq a(x)\leq\|a\|_{L^{\infty}}\quad (x\in\Omega).
\end{align}

The problem under consideration exhibits a purely nonlocal feature on $\{ a(x)=0 \}$, but both local and nonlocal features on $\{ a(x) > 0 \}$.
Note that in light of \eqref{eq:spq}, the $W^{1,q}$-energy has a higher regularization effect than that of the $W^{s,p}$-energy.
In this respect, our problem is deeply related to the following two topics which have been intensively studied currently: mixed local and nonlocal problems and double phase problems.

Mixed local and nonlocal linear operators, like $-\triangle + (-\triangle)^{s}$, naturally appear in the L\'evy process; in particular, nonlocal operators, like $(-\triangle)^{s}$, are related to a special case of the so-called purely jump process. 
There have been many researches on regularity for elliptic and parabolic equations involving these two kinds of linear operators; we refer to \cite{CCV11,CS07,CS09} for nonlocal equations and to \cite{BBCK09,CKSV10, CKSV12, CK10, Fo09} for mixed local and nonlocal equations, respectively.
Note that in those papers, Harnack's inequality is proved for a solution which is nonnegative in the whole domain $\mathbb{R}^{n}$. This positivity assumption on $\mathbb{R}^{n}$ cannot be relaxed, as shown in \cite{K1}.

Later in \cite{DKP14,DKP16}, H\"{o}lder regularity and Harnack's inequality were established for fractional $p$-Laplacian type equations via the approach using nonlocal Caccioppoli estimates and logarithmic estimates. In particular, a new form of Harnack's inequality was obtained in \cite{DKP14} without an extra positivity assumption, which is an extension of the results in \cite{K2}. In \cite{C1}, the results in \cite{DKP14,DKP16} generalized to functions in a fractional De Giorgi class, regarding a wider class of nonlinear nonlocal problems with lower order terms. The approach used in \cite{C1} involves improved Caccioppoli estimates and nonlocal isoperimetric type inequalities. We refer to \cite{KMS15a,KMS15b,MSY,NoMAAN} for more on regularity results for fractional $p$-Laplacian type problems.

Subsequently, in \cite{GK1} the purely analytic approaches in \cite{DKP14,DKP16} were applied to mixed local and nonlocal $p$-Laplacian type equations.
For more on regularity and other qualitative behaviors, see \cite{BDVV21, BDVV22,BS,DM,GK0, SVWZ22}. In particular, the maximal regularity in \cite{DM} was achieved for general problems modeled after
\begin{align*}
v \mapsto \int_{\mathbb{R}^{n}}\int_{\mathbb{R}^{n}}\dfrac{|v(x)-v(y)|^{p}}{|x-y|^{n+sp}}\,dxdy+\int_{\Omega}|Dv|^{q}\,dx, \qquad sp<q.
\end{align*}

Such mixed local and nonlocal problems have an anisotropic feature, and they naturally link to other kinds of problems, namely nonlocal problems with nonstandard growth. Recently, the methods and results in \cite{DKP14,DKP16} have been extended to nonlocal problems with various nonstandard growth conditions \cite{BKO,BKS, BOS, DP19JDE, FSV22, FZ, MS, Ok1}; see also \cite{CK, CKW1,CKW2} for the extensions of the methods in \cite{C1}. Specifically, in \cite{BOS} local boundedness and H\"older continuity results were proved for the nonlocal double phase problem
\[ v \mapsto \iint_{\mathcal{C}_{\Omega}}\left(\frac{|v(x)-v(y)|^{p}}{|x-y|^{n+sp}}+a(x,y)\frac{|v(x)-v(y)|^{q}}{|x-y|^{n+sp}}\right)\,dxdy, \qquad 0<s\le t<1<p \le q. \]
They are analogous to those for the local double phase problem
\begin{equation*}
v \mapsto \int_{\Omega}(|Dv|^{p} + a(x)|Dv|^{q})\,dx, \qquad 1<p<q.
\end{equation*}
We refer to \cite{BCM,BCM18,CM15a,CM15b,CM16JFA,DM19} and references therein for comprehensive regularity results, including gradient regularity, for local double phase problems. 
Of particular interest to the present paper are the low regularity results in \cite{BCM, CM15a, CMM15}, which we summarize as follows:
\begin{itemize}
\item If $a(\cdot) \in L^{\infty}_{\loc}(\Omega)$ and
\begin{align*}
\begin{cases}
 p\le q \leq  \frac{np}{n-p} & \text{when }\ p<n,\\  
p\le q< \infty & \text{when }\ p= n,
\end{cases}
\end{align*}
then $u \in L^{\infty}_{\loc}(\Omega)$.
\item If $p \le n$, $u \in L^{\infty}_{\mathrm{loc}}(\Omega)$ and
\begin{align*}
a(\cdot) \in C^{0,\alpha}_{\loc}(\Omega), \quad q \leq p + \alpha,
\end{align*}
then $u \in C^{0,\gamma}_{\loc}(\Omega)$ for some $\gamma \in (0,1)$.
\item If $u$ is nonnegative in a ball $B_{9R} \subset \Omega$, and
\begin{equation*}
\left\{
\begin{aligned}
u \in L^{\infty}_{\mathrm{loc}}(\Omega),\,\, a(\cdot) \in C^{0,\alpha}_{\loc}(\Omega), \,\, q \leq p + \alpha \quad & \text{when }\; p\le n, \\
a(\cdot) \in C^{0,\alpha}_{\loc}(\Omega), \,\, \frac{q}{p} \le 1 + \frac{\alpha}{n} \quad & \text{when }\; p>n,
\end{aligned}
\right.
\end{equation*}
then 
\begin{equation*}
\sup_{B_{R}}u \le c\inf_{B_{R}}u
\end{equation*}
for some $c>0$.
\end{itemize}

In this paper, we prove the local boundedness, H\"older continuity and Harnack's inequality for minimizers of \eqref{eq:ftnal0} under the natural assumptions on $s,p,q$ and $a(\cdot)$ which are analogous to those for local double phase problems. To the best of our knowledge, each of our results are the first one that deals with the functional \eqref{eq:ftnal0} to make a systematic study on the regularity of its minimizers in the literature. Moreover, Harnack's inequality stated in Theorem \ref{thm:har} below is a new result even when $a(\cdot)$ is constant. One of main points in our problem is the interplay between local and nonlocal phenomena in the double phase structure, which gives rise to several difficulties in combining the local theory with the nonlocal theory. 
Indeed, the approaches in the present paper are different from those in \cite{BOS, DM}. 
On one hand, since the second term in \eqref{eq:ftnal0} is of a local nature, we could not derive the same form of the logarithmic estimate as in \cite[Lemma~5.1]{BOS}. On the other hand, since \eqref{eq:ftnal0} features purely nonlocal behaviors on the set $\{a(x)=0\}$, we could not compare \eqref{eq:ftnal0} with a local problem as in \cite{DM}.  
We thus develop a different method motivated from the ones in \cite{BBL22,BCM,C1}, whose crucial tools include the expansion of positivity results described in Lemmas~\ref{lem:key} and \ref{lem:grow} below. For fractional $p$-Laplacian type problems, analogous results are proved in \cite[Lemma~6.3]{C1}, but their proofs are not directly applicable to our double phase setting. 
To overcome this difficulty, we first make use of the local boundedness of minimizers and the H\"older continuity of $a(\cdot)$ to establish an improved Caccioppoli estimate given in Lemma~\ref{lem:DG} below. Then we prove Lemma~\ref{lem:key} by considering two alternatives, say ``the nonlocal phase'' and ``the mixed phase''. 
Moreover, for Harnack's inequality, we take into account the case that $sp > n$ as well, where optimal assumptions on $s,p,q$ and $a(\cdot)$ accordingly change. We also describe the precise nonlocal contribution of minimizers to our results via nonlocal tails.

\subsection{Assumptions and main results}
We actually consider a general functional of the type
\begin{align*}
\mathcal{E}(u;\Omega)\coloneqq\iint_{\mathcal{C}_{\Omega}}|u(x)-u(y)|^{p}K_{sp}(x,y)\,dxdy+\int_{\Omega}a(x)F(x,Du)\,dx,
\end{align*}
where $F:\Omega\times\ern\rightarrow\er$ is a Carath\'{e}odory function such that
\begin{align}\label{eq:F}
\Lambda^{-1}|\xi|^q\leq F(x,\xi)\leq \Lambda|\xi|^q
\end{align}
for some $\Lambda>1$, and $K_{sp}:\ern\times\ern\rightarrow\er$ is a symmetric kernel with order $(s,p)$; i.e., it is a measurable function satisfying 
\begin{align}\label{eq:ker}
\dfrac{\Lambda^{-1}}{|x-y|^{n+sp}}\leq K_{sp}(x,y) = K_{sp}(y,x) \leq\dfrac{\Lambda}{|x-y|^{n+sp}}
\end{align}
for a.e. $(x,y)\in\ern\times\ern$. 

Let us introduce relevant function spaces which will be used throughout the paper. We denote by
\begin{align}\label{eq:AOmega}
\mathcal{A}(\Omega)\coloneqq \left\{v:\ern\rightarrow\er\,\,\Big|\,\, v|_{\Omega}\in L^p(\Omega)\,\,\text{and}\,\,\mathcal{E}_0(v;\Omega)<\infty\right\},
\end{align}
and
\begin{align}\label{eq:tailsp}
L^{p-1}_{sp}(\ern)\coloneqq\left\{v:\ern\rightarrow\er\,\,\Big|\int_{\ern}\dfrac{|v(x)|^{p-1}}{(1+|x|)^{n+sp}}\,dx<\infty\right\}.
\end{align}
Then we define minimizers of the functional $\mathcal{E}$ as follows.
\begin{definition}
We say that $u\in\mathcal{A}(\Omega)$ is a minimizer of $\mathcal{E}$ if
\begin{align}\label{eq:mini}
\mathcal{E}(u;\Omega)\leq \mathcal{E}(v;\Omega)
\end{align}
for any measurable function $v:\ern\rightarrow\er$ with $v=u$ a.e. in $\ern\setminus\Omega$.
\end{definition}

The first theorem is about the local boundedness of minimizers in the case that $sp \le n$. 

\begin{theorem}[Local boundedness]\label{thm:bdd}
Assume \eqref{eq:spq}--\eqref{eq:ker} for the functional $\mathcal{E}$. Suppose that
$s,p,q$ satisfy 	
\begin{align}\label{eq:pq}
\begin{cases}
p\leq q\leq\frac{np}{n-sp}\quad&\text{when}\,\,\,sp<n,\\
p\leq q<\infty\quad&\text{when}\,\,\,sp = n.
\end{cases}
\end{align}
Then every minimizer $u\in\mathcal{A}(\Omega)\cap L^{p-1}_{sp}(\ern)$	of $\mathcal{E}$ is locally bounded in $\Omega$.
\end{theorem}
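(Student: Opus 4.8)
The plan is to run a De Giorgi iteration on super-level sets, adapted to the mixed local/nonlocal double phase structure. Fix a ball $B_{r_0} \equiv B_{r_0}(x_0) \Subset \Omega$ and, for $k \ge 0$ and $r_0/2 \le \rho < R \le r_0$, set $w_k \coloneqq (u-k)_+$ together with the nested balls $B_\rho \subset B_R$. The first step is to derive a Caccioppoli-type inequality on these level sets: testing the minimality \eqref{eq:mini} with $v = u - \eta^q w_k$ (or the comparison competitor obtained by subtracting a cutoff times $w_k$), where $\eta$ is a cutoff with $\eta \equiv 1$ on $B_\rho$, $\supp \eta \subset B_R$, $|D\eta| \lesssim (R-\rho)^{-1}$. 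This produces, on the left, the gradient term $\int_{B_\rho \cap \{u>k\}} a(x)|Dw_k|^q\dx$ and the nonlocal Sobolev–Slobodeckij energy of $\eta w_k$, and on the right a bulk term $\int (R-\rho)^{-q}\big(a(x)+\|a\|_\infty\big) w_k^q\dx$ controlled via \eqref{eq:a}, plus a nonlocal tail term of the form $\sup_{x\in B_\rho}\int_{\ern\setminus B_R} \frac{w_k(y)^{p-1}}{|x_0-y|^{n+sp}}\dd y \cdot \int_{B_R} w_k\dx$, which is finite because $u \in L^{p-1}_{sp}(\ern)$ and is absorbed into the tail quantities. The mismatch between the exponents $p$ (nonlocal) and $q$ (local) is handled by keeping both energies on the left and by exploiting $p \le q$ together with the boundedness assumption \eqref{eq:a} on $a$.

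The second step is to convert this energy bound into a gain of integrability via Sobolev embedding. On the set $\{a(x)=0\}$ only the nonlocal energy is available, so one uses the fractional Sobolev inequality $\|\eta w_k\|_{L^{p^*_s}(B_R)}^p \lesssim [\eta w_k]_{W^{s,p}(B_R)}^p$ with $p^*_s = \frac{np}{n-sp}$ when $sp<n$ (and an arbitrarily high exponent, via $W^{s,p}\hookrightarrow L^m$ for all $m<\infty$, when $sp=n$); this is exactly where the hypothesis \eqref{eq:pq} enters, since we must interpolate the $L^q$ level-set integral against this $L^{p^*_s}$ gain, and $q \le p^*_s = \frac{np}{n-sp}$ is precisely what makes the interpolation exponent work. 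On $\{a(x)>0\}$ one additionally has the local energy, which only helps. The upshot is a reverse-Hölder / energy inequality of the form
\[
\Xint-_{B_\rho}\!\! w_k^q\dx \;\le\; \frac{c\,\|a\|_{\mathrm{data}}}{(R-\rho)^{\chi}}\,\Big(\Xint-_{B_R}\!\! w_k^{q}\dx\Big)^{1+\delta}\,\frac{1}{(k-h)^{\text{(power)}}}\;+\;(\text{tail})
\]
for levels $h<k$, with $\delta>0$ coming from the Sobolev gain and $\chi,c$ depending only on $\data \coloneqq (n,s,p,q,\Lambda,\|a\|_{L^\infty})$.

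The third step is the standard fast-geometric-convergence lemma (De Giorgi): choosing levels $k_j = M(2-2^{-j})$ and radii $r_j = r_0/2 + r_0 2^{-j-1}$, one shows that the sequence $Y_j \coloneqq \Xint-_{B_{r_j}}(u-k_j)_+^q\dx$ satisfies $Y_{j+1}\le C\,b^{j}Y_j^{1+\delta}$, hence $Y_j\to 0$ provided $Y_0 = \Xint-_{B_{r_0}}(u-M)_+^q\dx$ is below an explicit threshold; taking $M$ comparable to $\big(\Xint-_{B_{r_0}} u_+^q\dx\big)^{1/q}$ plus the nonlocal tail $\mathrm{Tail}_{p-1}(u_+;x_0,r_0/2)$ makes $Y_0$ small enough, giving $\esssup_{B_{r_0/2}} u \le 2M < \infty$. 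Applying the same argument to $-u$ bounds $u$ from below, and a covering argument upgrades this to local boundedness on all of $\Omega$.

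The main obstacle I expect is the first step: producing the right Caccioppoli inequality in which the nonlocal tail is genuinely controlled and the local $L^q$ term and the nonlocal $W^{s,p}$ term are balanced despite $p\le q$. Because the functional degenerates to a purely nonlocal one where $a$ vanishes, one cannot lean on the local term for the gain of integrability there, so the $W^{s,p}$-Sobolev embedding must carry the full iteration on $\{a=0\}$ — and this is exactly why the restriction $q\le \frac{np}{n-sp}$ (rather than the weaker Morrey-type condition one might hope for from the local term) is the natural, essentially sharp hypothesis. Handling the cross term that appears when estimating $|\eta(x)w_k(x) - \eta(y)w_k(y)|^p$ against $|u(x)-u(y)|^p$ on the nonlocal diagonal — and splitting the integration region into the near-diagonal part, the $B_R\times B_R$ part, and the long-range $B_R\times(\ern\setminus B_R)$ part — is the delicate computational core, but it is by now a well-understood nonlocal technique (cf. \cite{DKP14,DKP16,GK1,BOS}).
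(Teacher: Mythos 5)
Your proposal is correct and follows essentially the same route as the paper: a Caccioppoli estimate with tail obtained by testing minimality with $u$ minus a cutoff times $(u-k)_+$, a fractional Sobolev--interpolation step (the paper's Lemma~\ref{lem:sp2}) in which the restriction $q\le \frac{np}{n-sp}$ enters exactly as you describe because only the $W^{s,p}$-energy is available on $\{a=0\}$, and a De~Giorgi iteration on the level-set quantities. The only cosmetic difference is that the paper iterates $\int_{A^+(k_i,\sigma_i)}H_0((u-k_i)_+)\,dx$ with $H_0(t)=t^p+\|a\|_{L^\infty}t^q$ and chooses the starting level $k_0$ large enough qualitatively (using $H_0(u)\in L^1$), rather than prescribing $M$ explicitly in terms of an $L^q$ average plus the tail.
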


The second theorem is concerned with the H\"{o}lder continuity of bounded minimizers in the case that $sp\leq n$. 
For this, we additionally assume the H\"{o}lder continuity of $a(\cdot)$:
\begin{align}\label{eq:hol.a}
|a(x)-a(y)|\leq [a]_{\alpha}|x-y|^{\alpha},\quad\alpha\in(0,1]
\end{align}
for every $x,y\in\Omega$.

\begin{theorem}[H\"{o}lder continuity]\label{thm:hol}
Assume \eqref{eq:spq}, \eqref{eq:F} and \eqref{eq:ker} for the functional $\mathcal{E}$. Suppose that \eqref{eq:hol.a} holds for $a(\cdot):\Omega\rightarrow\er$. Let $s,p$ and $q$ satisfy $sp\leq n$ and
\begin{equation*}
q\leq sp+\alpha.
\end{equation*}	
Then every minimizer $u\in\mathcal{A}(\Omega)\cap L^{p-1}_{sp}(\ern)$	of $\mathcal{E}$ which is locally bounded in $\Omega$ is locally H\"{o}lder continuous in $\Omega$. Moreover, for any open set $\Omega'\Subset\Omega$, there exists $\gamma\in(0,1)$ depending only on $n,s,p,q,\Lambda,\alpha,[a]_{\alpha}$ and $\|u\|_{L^{\infty}(\Omega')}$ such that $u\in C^{\gamma}_{\loc}(\Omega')$.
\end{theorem}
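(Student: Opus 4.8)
The plan is to run a De Giorgi iteration scheme adapted to the mixed local/nonlocal double phase setting, following the strategy that Theorem~\ref{thm:bdd} has already supplied local boundedness, so that the coefficient $a(\cdot)$ may be treated as genuinely H\"older continuous with the extra structural gain $q \le sp + \alpha$ fully exploited. The first step is to record the improved Caccioppoli inequality (the analogue of Lemma~\ref{lem:DG} mentioned in the introduction): testing the minimality of $u$ against truncations $v = u - (u-k)_{\pm}\eta^q$ with a cutoff $\eta$ supported in a ball $B_r \Subset \Omega'$, one obtains, on super-/sublevel sets $A^{\pm}_{k,r} = \{\pm(u-k)>0\}\cap B_r$, a bound of the form
\begin{align*}
\iint_{\mathcal{C}_{B_\rho}} |w_{\pm}(x)-w_{\pm}(y)|^p K_{sp}\,dxdy + \int_{B_\rho} a(x)|Dw_{\pm}|^q\,dx
\le \frac{c}{(r-\rho)^q}\left( \int_{B_r} a(x) + |B_r|^{1-sp/n} \right) w_{\pm}^{q}\big|_{\sup} + (\text{tail terms}),
\end{align*}
where $w_{\pm} = (u-k)_{\pm}$ and the nonlocal long-range interaction is absorbed into a $\tail$ contribution; here the condition $q \le sp + \alpha$ is what lets one dominate the oscillation of $a$ on $B_r$ by a power of the radius matching the $W^{1,q}$ scaling and thus fold the ``$a$-phase'' into the same iteration as the ``$p$-phase''.

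The second step is the oscillation-reduction lemma: combine the Caccioppoli estimate with the fractional Sobolev inequality on $B_\rho$ (using $sp \le n$, so $W^{s,p}\hookrightarrow L^{p^*_s}$ with $p^*_s = np/(n-sp)$, or any $L^{\chi p}$ with $\chi>1$ when $sp=n$) to produce a reverse-H\"older-type gain on the level sets, and run the standard De Giorgi iteration over the shrinking balls $B_{\rho_j}$, $\rho_j \downarrow \rho/2$, with levels $k_j \uparrow k$. This yields a quantitative decay: if $u \le M$ on $B_r$ and $k$ is close to $M$, then $u \le k$ on $B_{r/2}$ once a smallness threshold on $|A^+_{k,r}|/|B_r| + (\text{scaled tail})$ is met. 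Iterating over dyadic annuli and tracking the tail via the $L^{p-1}_{sp}$ membership of $u$ — estimating $\tail(u;x_0,r)$ by $\|u\|_{L^\infty}$ near $x_0$ plus a fixed finite contribution from far away — gives $\essosc_{B_r} u \le \lambda\, \essosc_{B_{2r}} u + c r^{\beta_0}$ for some $\lambda \in (0,1)$, $\beta_0>0$, on every ball $B_{2r}\Subset\Omega'$. A standard iteration lemma then converts this into $u \in C^{\gamma}_{\loc}(\Omega')$ with $\gamma \in (0,1)$ depending only on $n,s,p,q,\Lambda,\alpha,[a]_{\alpha},\|u\|_{L^\infty(\Omega')}$, as claimed.

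The main obstacle I expect is precisely the one flagged in the introduction: making the local $W^{1,q}$-term and the nonlocal $W^{s,p}$-term cooperate in a single iteration when $a$ may vanish. On $\{a(x)=0\}$ there is no local regularization at all, so every Sobolev/Poincar\'e-type step in the iteration must be driven by the nonlocal energy alone, while on $\{a(x)>0\}$ the local energy dominates at small scales because $q > sp$ (strictly, in the worst case $q = sp+\alpha$). The device to handle this is the ``two alternatives'' dichotomy alluded to for Lemma~\ref{lem:key}: in the \emph{nonlocal phase}, where the nonlocal piece of the Caccioppoli estimate controls things, one argues as in the fractional $p$-Laplacian case (following \cite{C1, BOS}); in the \emph{mixed phase}, where $\int_{B_r} a(x)\,w_\pm^q$ is the dominant term, one uses that $q \le sp+\alpha$ together with $\osc_{B_r} a \le [a]_\alpha r^\alpha$ and the local boundedness bound $w_\pm \le 2\|u\|_{L^\infty(\Omega')}$ to reabsorb the $a$-term at the cost of a controlled power of $r$, so that the same geometric decay constant $\lambda$ works in both regimes. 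A secondary technical point is the careful bookkeeping of the nonlocal tail across scales — one must show the tail contribution at radius $r$ is $O(r^{\beta_0})$-small relative to the current oscillation, which again relies on $u \in L^{p-1}_{sp}(\ern)\cap L^\infty_{\loc}$ and on choosing the annuli so that the tail splits into a near part bounded by the current supremum and a harmless fixed far part. Once these two ingredients are in place, the De Giorgi machinery runs routinely.
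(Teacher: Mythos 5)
Your overall architecture --- an improved Caccioppoli estimate exploiting $q\le sp+\alpha$, the local boundedness of $u$ and the H\"older continuity of $a(\cdot)$; a critical-mass (De Giorgi) iteration on shrinking balls; a dyadic oscillation scheme with careful tail bookkeeping --- coincides with the paper's (Lemmas \ref{lem:DG} and \ref{lem:grow} and the induction in Section \ref{sec:5}). The gap is in the step you declare routine: passing from the dichotomy ``$u$ lies above the midpoint level on at least half of the ball'' to the smallness threshold $|B\cap\{u<2\delta t\}|\le\varepsilon|B|$ that the critical-mass lemma requires. This is exactly Lemma \ref{lem:key} of the paper, and it is the heart of the proof. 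In the regime where the $q$-phase dominates, i.e. $(\delta t/R^{s})^{p}<a^{-}_{B}(\delta t/R)^{q}$, the nonlocal cross-term argument of \cite{C1} that you invoke does not close: comparing the $q$-phase Caccioppoli bound with the cross-term lower bound leaves an extra factor of order $a^{-}_{B}(\delta t)^{q-p}R^{sp-q}$, which is unbounded as $R\to0$ because $q\ge p>sp$; and the logarithmic-estimate route of \cite{BOS} is unavailable here, as the paper points out, because the second energy term is local. Your proposed fix for the mixed phase --- reabsorbing the $a$-term using $q\le sp+\alpha$, $\osc_{B_r}a\le[a]_{\alpha}r^{\alpha}$ and $\|u\|_{L^{\infty}}$ --- only repairs the right-hand side of the Caccioppoli inequality (this is \eqref{eq:DG4} in the paper); it produces no decay of the sublevel-set measure as $\delta\to0$.

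What is actually needed in the mixed phase is a genuinely local mechanism: since $a^{-}_{B}>0$ forces $u\in W^{1,q}$ there, one applies De Giorgi's isoperimetric inequality (Lemma \ref{lem:iso}) along the dyadic ladder of levels $2^{-j}t$ and sums over $j$, which yields only a logarithmic decay $|B\cap\{u\le2\delta t\}|\lesssim\nu^{-\frac{2n}{n-1}}|\log\delta|^{-\frac{n}{n-1}\frac{q-1}{q}}|B|$; a further sub-dichotomy is then required when the dominant phase switches between the levels $t$ and $\delta t$ (Step 3 of Lemma \ref{lem:key}), which interpolates the power decay $\delta^{\frac{p-1}{2}}$ with the logarithmic one. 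Because the resulting rate is only logarithmic in $\delta$, the constant $\delta$ in the expansion-of-positivity lemma must be extracted from this weaker decay, and this propagates into the size of the H\"older exponent $\gamma$. As written, your scheme would either fail in the mixed phase or silently assume a power-type measure decay that is not available there; once the isoperimetric argument and the phase-switching sub-case are inserted, the remainder of your proposal does match the paper's proof.
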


Finally, in order to state a nonlocal version of Harnack's inequality, we define the tail as follows:
\begin{equation}\label{eq:tail}
\tail(v;x_0,R) \coloneqq \int_{\ern\setminus B_R(x_0)}\dfrac{|v(x)|^{p-1}}{|x-x_0|^{n+sp}}\,dx.
\end{equation}
For any subset $\Omega_0\Subset\Omega$, let us denote
\begin{align*}
\data(\Omega_0) \coloneqq
\begin{cases}
n,s,p,q,\Lambda,\alpha,[a]_{\alpha},\|u\|_{L^{\infty}(\Omega_0)}&\quad\text{when}\quad sp\leq n,\\
n,s,p,q,\Lambda,\alpha,[a]_{\alpha},[u]_{W^{s,p}(\Omega_0)}&\quad\text{when}\quad sp> n.
\end{cases}
\end{align*}
Then we have the following.
\begin{theorem}[Harnack's inequality]\label{thm:har}
Assume \eqref{eq:spq}, \eqref{eq:F} and \eqref{eq:ker} for the functional $\mathcal{E}$. Suppose that \eqref{eq:hol.a} for $a(\cdot)$. Let $s,p,q$ and $\alpha$ satisfy
\begin{align}\label{eq:qspa2}
\begin{cases}
q\leq sp+\alpha\quad&\text{when}\,\,\,sp\leq n,\\
q\leq p+\frac{p\alpha}{n}+\frac{(s-1)pq}{n}\quad&\text{when}\,\,\,sp> n.
\end{cases}
\end{align}
Let $u\in\mathcal{A}(\Omega)\cap L^{p-1}_{sp}(\ern)$ be a minimizer of $\mathcal{E}$ which is nonnegative in a ball $B_{16R}  = B_{16R}(x_{0}) \Subset \Omega$. When $sp \le n$, assume further that $u$ is bounded in $B_{16R}$. Then
\begin{align}\label{eq:har}
\sup_{B_{R}}u\leq c\inf_{B_{R}}u+c\left[R^{sp}\tail(u_-;x_0,2R)\right]^{\frac{1}{p-1}}
\end{align}	
holds for a constant $c = c(\data(B_{2R}))$, where $u_-=\max\{-u,0\}$.
\end{theorem}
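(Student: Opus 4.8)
The plan is to follow the classical two-step strategy: first a weak Harnack (infimum) estimate controlling $\inf_{B_R} u$ from below by an integral average of $u$, and then an $L^\infty$–$L^\gamma$ estimate controlling $\sup_{B_R} u$ from above by the same average, together with the nonlocal tail of the negative part. Both pieces will be built on the De Giorgi machinery already set up in the earlier lemmas: the improved Caccioppoli estimate (Lemma~\ref{lem:DG}) and, crucially, the expansion of positivity results (Lemmas~\ref{lem:key} and \ref{lem:grow}). Since $u\ge 0$ in $B_{16R}$, the nonlocal tail that appears in all the Caccioppoli-type estimates splits as a "good" part coming from $B_{16R}$ (which is nonnegative and absorbable) plus the tail of $u_-$ supported in $\ern\setminus B_{16R}$, which is exactly the term appearing on the right-hand side of \eqref{eq:har}. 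I would first fix this bookkeeping: set $k_0 \coloneqq \bigl[R^{sp}\tail(u_-;x_0,2R)\bigr]^{1/(p-1)}$ and work throughout with the shifted nonnegative quantity, or equivalently carry $k_0$ as an additive error in every estimate.

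\medskip

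\textbf{Step 1: the supremum bound.} Starting from Lemma~\ref{lem:DG} applied to the super-level sets $\{u>k\}$ on a chain of shrinking balls $B_{\rho_j}\subset B_{2R}$, a standard De Giorgi iteration (using the Sobolev embedding appropriate to the double-phase energy — here the nonlocal $W^{s,p}$ embedding when $sp\le n$, and the $W^{s,p}\hookrightarrow C^{0,\cdot}$ embedding when $sp>n$, which is why the $\data$ set changes) yields, for any $\delta\in(0,1]$,
\begin{align}\label{eq:supstep}
\sup_{B_{R}} u \;\le\; \delta\,\Bigl(\sup_{B_{2R}} u\Bigr) \;+\; c\,\delta^{-\beta}\,\Bigl(\;\mean{B_{2R}} u^{\gamma}\dx\Bigr)^{1/\gamma} \;+\; c\,k_0
\end{align}
for suitable exponents $\beta,\gamma>0$. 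One then removes the $\delta\sup_{B_{2R}}u$ term by the usual interpolation/covering lemma (a geometric iteration over concentric balls between $B_R$ and $B_{2R}$). The exponent $\gamma$ can be taken arbitrarily small, which is what will make it match Step~2.

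\medskip

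\textbf{Step 2: the infimum bound via expansion of positivity.} This is the heart of the argument and the main obstacle. Working with $v \coloneqq u + k_0$, which by the tail splitting behaves like a nonnegative minimizer with harmless tail, I would use the logarithmic-type estimate (derived from the Caccioppoli estimate tested against a power of $v$) to get a measure-theoretic density statement: if $v$ is "large on a fixed fraction of $B_r$", then by Lemma~\ref{lem:key} and the growth lemma Lemma~\ref{lem:grow} the positivity expands, i.e. $\inf_{B_{r/2}} v$ is bounded below by a fixed fraction of that large value, up to the tail error. Iterating Lemma~\ref{lem:grow} across dyadic scales and combining with a Krylov–Safonov-type measure argument (or directly the "$L^\gamma$ weak Harnack" formulation available from the De Giorgi class membership) gives
\begin{align}\label{eq:infstep}
\Bigl(\;\mean{B_{2R}} u^{\gamma}\dx\Bigr)^{1/\gamma} \;\le\; c\,\inf_{B_{R}} u \;+\; c\,k_0
\end{align}
for the \emph{same} small $\gamma$ as in \eqref{eq:supstep}. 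The delicate points here, which are precisely where the double-phase structure forces us to depart from \cite{C1}, are: (i) the Caccioppoli/logarithmic estimate now carries the coefficient $a(\cdot)$ in front of the local gradient term, so the assumption $q\le sp+\alpha$ (resp.\ $q\le p+\frac{p\alpha}{n}+\frac{(s-1)pq}{n}$ when $sp>n$) is exactly what is needed so that the $a(x)|Dv|^q$ contribution, after using Hölder continuity of $a$ and local boundedness (resp.\ the $W^{s,p}$-seminorm bound) of $u$, is dominated by the nonlocal $W^{s,p}$ term on the relevant scale; and (ii) in the $sp>n$ regime one must track the Morrey exponent carefully, which is why $[u]_{W^{s,p}(\Omega_0)}$ rather than $\|u\|_{L^\infty}$ enters the constant.

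\medskip

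\textbf{Step 3: conclusion.} Chaining \eqref{eq:supstep} (with $\delta$ fixed small after the covering argument) with \eqref{eq:infstep} gives
\[
\sup_{B_R} u \;\le\; c\,\Bigl(\;\mean{B_{2R}} u^{\gamma}\dx\Bigr)^{1/\gamma} + c\,k_0 \;\le\; c\,\inf_{B_R} u + c\,k_0,
\]
which is exactly \eqref{eq:har} after recalling the definition of $k_0$. All constants depend only on $\data(B_{2R})$ as claimed, since every ingredient (Lemmas~\ref{lem:DG}, \ref{lem:key}, \ref{lem:grow}, the Sobolev/Morrey embeddings, the iteration lemmas) has constants of that form. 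I expect the bulk of the work — and the genuinely new contribution relative to the purely nonlocal case — to be in carefully verifying the expansion-of-positivity step in the "mixed phase" alternative of Lemma~\ref{lem:key}, where both the local and nonlocal terms are active and the sharp threshold on $q$ is saturated.
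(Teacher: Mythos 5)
Your high-level architecture --- a local sup-estimate via De Giorgi iteration from Lemma~\ref{lem:DG}, a weak Harnack estimate via expansion of positivity from Lemmas~\ref{lem:key} and \ref{lem:grow}, and a final combination through covering/interpolation --- is indeed the skeleton of the paper's proof. However, there is a genuine gap in your Step~1. The De Giorgi iteration built on Lemma~\ref{lem:DG} naturally produces a term involving $\tail(u_+;\cdot)$, not a term controlled by $\tail(u_-;\cdot)$, since the Caccioppoli estimate for $w_+=(u-k)_+$ carries $\tail(w_+;\cdot)\le\tail(u_+;\cdot)$, which includes a large positive contribution from $u$ on $B_{16R}\setminus B_r$. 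Your remark that this is ``absorbable'' identifies the right issue, but the absorption is not automatic; the paper devotes a separate Step~1 to the pointwise tail comparison
\[
\tail(u_+;z,r)\le c\,g_{B_r(z)}\!\Bigl(\sup_{B_r(z)}u\Bigr)+c\,\tail(u_-;z,r),
\]
obtained by testing Lemma~\ref{lem:DG} at level $k=2\sup_{B_r(z)}u$ and exploiting $(u(y)-2M)_+^{p-1}\ge\min\{1,2^{2-p}\}u_+(y)^{p-1}-2^{p-1}M^{p-1}$. Only after this, by taking the free parameter $\delta_1$ in Lemma~\ref{lem:bdd} small, interpolating via Jensen and Young on the double-phase average, and running a covering argument over concentric balls between $B_R$ and $B_{2R}$, is the $\sup$ factor absorbed and $\tail(u_-;\cdot)$ isolated. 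The shift $v=u+k_0$ does not sidestep this, as $\tail(v_+;\cdot)$ still carries the $u_+$ contribution from $B_{16R}\setminus B_r$.

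A secondary issue: your Step~2 appeals to a ``logarithmic-type estimate'', but the paper's Introduction explicitly records that no analogue of the nonlocal logarithmic estimate is available in the mixed setting --- this is precisely why Lemma~\ref{lem:key} (based on the isoperimetric inequality, Lemma~\ref{lem:iso}) and Lemma~\ref{lem:grow} were introduced as replacements. Moreover, the weak Harnack (Lemma~\ref{lem:wh}) does not iterate Lemma~\ref{lem:grow} across dyadic scales; it uses the refined single-scale variant Lemma~\ref{lem:grow*} with level-dependent exponents $\nu^k,\delta^k$, followed by a Cavalieri-type integration over levels $t$, rather than a Krylov--Safonov covering. These refinements need to be filled in for the proposal to close.
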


\begin{remark}
We can obtain the following result for general minimizers by combining the results of Theorems~\ref{thm:bdd} and \ref{thm:hol}. Namely, under the same assumptions on $K_{sp}$, $F$ and $a(\cdot)$ as in Theorem~\ref{thm:hol}, every minimizer $u\in\mathcal{A}(\Omega)\cap L^{p-1}_{sp}(\ern)$ of $\mathcal{E}$ is locally H\"{o}lder continuous in $\Omega$, provided
\begin{align*}
\begin{cases}
p\leq q\leq\min\left\{\frac{np}{n-sp},sp+\alpha\right\}&\quad\text{when}\,\,\,sp<n,\\
p\leq q\leq sp+\alpha=n+\alpha&\quad\text{when}\,\,\,sp=n. 
\end{cases}
\end{align*}
Also, we can combine the results of Theorems~\ref{thm:bdd} and \ref{thm:har} as follows. With the same assumptions as Theorem \ref{thm:har} for $K_{sp}$, $F$ and $a(\cdot)$, if
\begin{align*}
\begin{cases}
p\leq q\leq\min\left\{\frac{np}{n-sp},sp+\alpha\right\}&\quad\text{when}\,\,\,sp<n,\\
p\leq q\leq sp+\alpha=n+\alpha&\quad\text{when}\,\,\,sp=n,\\
p\leq q\leq p+\frac{p\alpha}{n}+\frac{(s-1)pq}{n}&\quad\text{when}\,\,\,sp>n,
\end{cases}
\end{align*}
then we have the estimate \eqref{eq:har} for every minimizer $u\in\mathcal{A}(\Omega)\cap L^{p-1}_{sp}(\ern)$ of $\mathcal{E}$.
\end{remark}
We organize the paper as follows. Section~\ref{sec:2} is devoted to basic notations and inequalities which will be used throughout the paper. In Section~\ref{sec:3}, we obtain Caccioppoli estimates to prove Theorem~\ref{thm:bdd}. In Section~\ref{sec:4}, we prove the expansion of positivity lemma. Finally, in Section~\ref{sec:5} we prove Theorems~ \ref{thm:hol} and \ref{thm:har}.

\section{Preliminaries}\label{sec:2}
For $x_0\in\ern$ and $r>0$, $B_r(x_0)$ is the open ball in $\ern$ with center $x_0$ and radius $r$. We omit the center of a ball if it is not important in the context. Throughout the paper, $c$ is a general constant with $c\geq 1$, and its value may differ from each line. The notation $f\eqsim g$ means that there is a constant $c\geq 1$ such that $\frac{1}{c}f\leq g\leq cf$. We write $v_{\pm}\coloneqq \max\{\pm v,0\}$ for a measurable function $v$.
Additionally, if $v$ is integrable over a measurable set $S$ with $0<|S|<\infty$, we denote the integral average over $S$ by
\begin{align*}
(v)_S=\mean{S}v\,dx=\dfrac{1}{|S|}\int_{S}v\,dx.
\end{align*} 
When $S \subset \Omega$, we also denote
\[ a^{+}_{S} \coloneqq \sup_{x\in S} a(x) \qquad \text{and} \qquad a^{-}_{S} \coloneqq \inf_{x\in S}a(x). \]

We recall the definition and basic properties of fractional Sobolev spaces; for more details, see \cite{DPV}. 
With an open set $U\subseteq\ern$, $s\in(0,1)$ and $p\geq 1$, the fractional Sobolev space $W^{s,p}(U)$ consists of all measurable functions $v:U\rightarrow\er$ with
\begin{align*}
\|v\|_{W^{s,p}(U)} \coloneqq \|v\|_{L^p(U)}+[v]_{W^{s,p}(U)}=\left(\int_{U}|v|^p\,dx\right)^{\frac{1}{p}}+\left(\int_{U}\int_{U}\dfrac{|v(x)-v(y)|^p}{|x-y|^{n+sp}}\,dxdy\right)^{\frac{1}{p}}<\infty.
\end{align*}
We denote the $s$-fractional Sobolev conjugate of $p$ by
\begin{align*}
p^*_s=
\begin{cases}
\frac{np}{n-sp}&\text{if }sp<n,\\
\text{any number in }(p,\infty)&\text{if }sp\geq n.
\end{cases}
\end{align*}
Then we have the following embedding of $W^{s,p}(U)$, which holds, for instance, when $U$ is a Lipschitz domain (see for instance \cite{DPV}):
\begin{itemize}
\item If $sp \le n$, then $W^{s,p}(U) \hookrightarrow L^{p_{s}^{*}}(U)$.
\item If $sp > n$, then $W^{s,p}(U) \hookrightarrow C^{0,s-\frac{n}{p}}(U)$.
\end{itemize}
Moreover, we recall the corresponding fractional Sobolev-Poincar\'{e} type inequality on balls.
\begin{lemma}[\cite{NoMAAN,NoIHP}]
Let $s\in(0,1)$ and $p\geq 1$. For any $v\in W^{s,p}(B_r)$ there holds
\begin{align}\label{eq:fsp}
\left(\mean{B_r}|v-(v)_{B_r}|^{p^*_s}\,dx\right)^{\frac{p}{p^*_s}}\leq cr^{sp}\mean{B_r}\int_{B_r}\dfrac{|v(x)-v(y)|^p}{|x-y|^{n+sp}}\,dydx
\end{align}
for a constant $c=c(n,s,p)$. Moreover, if $sp > n$, then there holds
\begin{align}\label{eq:fsp2}
[v]_{C^{0,s-\frac{n}{p}}(B_r)}\leq c[v]_{W^{s,p}(B_r)}
\end{align}
for a constant $c=c(n,s,p)$.
\end{lemma}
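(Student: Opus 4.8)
The plan is to reduce everything to the unit ball by scaling, establish an elementary fractional $L^p$-Poincar\'e inequality, and then feed it into the fractional Sobolev and Morrey embeddings of $W^{s,p}(B_1)$ already recorded above. First I would observe that both \eqref{eq:fsp} and \eqref{eq:fsp2} are scaling invariant: for $v \in W^{s,p}(B_r)$ set $\tilde v(x) \coloneqq v(x_0+rx)$ on $B_1$; a change of variables gives $[\tilde v]_{W^{s,p}(B_1)}^p = r^{sp-n}[v]_{W^{s,p}(B_r)}^p$, $\mean{B_1}|\tilde v-(\tilde v)_{B_1}|^{q}\,dx = \mean{B_r}|v-(v)_{B_r}|^{q}\,dx$ for every $q$, and $[\tilde v]_{C^{0,s-\frac{n}{p}}(B_1)} = r^{s-\frac{n}{p}}[v]_{C^{0,s-\frac{n}{p}}(B_r)}$, while $|B_r| = r^n|B_1|$. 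Matching powers of $r$, it suffices to prove both inequalities on $B_1$ with a constant depending only on $n,s,p$.

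Next I would prove the fractional $L^p$-Poincar\'e inequality $\|v-(v)_{B_1}\|_{L^p(B_1)}^p \le c(n,s,p)[v]_{W^{s,p}(B_1)}^p$. Writing $w \coloneqq v-(v)_{B_1} = \mean{B_1}(v(\cdot)-v(y))\,dy$ and applying Jensen's inequality twice yields $\int_{B_1}|w(x)|^p\,dx \le \frac{1}{|B_1|}\int_{B_1}\int_{B_1}|v(x)-v(y)|^p\,dy\,dx$; since $|x-y|\le 2$ for $x,y\in B_1$ we have $|v(x)-v(y)|^p \le 2^{n+sp}\,|v(x)-v(y)|^p|x-y|^{-(n+sp)}$, and the claim follows.

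I would then conclude on $B_1$. For \eqref{eq:fsp}, apply the embedding $W^{s,p}(B_1)\hookrightarrow L^{p^*_s}(B_1)$ (valid for $sp\le n$) to $w$:
\[ \|w\|_{L^{p^*_s}(B_1)} \le c\|w\|_{W^{s,p}(B_1)} = c\big(\|w\|_{L^p(B_1)}+[v]_{W^{s,p}(B_1)}\big) \le c[v]_{W^{s,p}(B_1)} \]
by the Poincar\'e estimate, and dividing through by the appropriate powers of $|B_1|$ and scaling back gives \eqref{eq:fsp}. For \eqref{eq:fsp2}, since the H\"older seminorm annihilates constants, $[v]_{C^{0,s-\frac{n}{p}}(B_1)} = [w]_{C^{0,s-\frac{n}{p}}(B_1)} \le \|w\|_{C^{0,s-\frac{n}{p}}(B_1)} \le c\|w\|_{W^{s,p}(B_1)} \le c[v]_{W^{s,p}(B_1)}$ by the embedding $W^{s,p}(B_1)\hookrightarrow C^{0,s-\frac{n}{p}}(B_1)$ (valid for $sp>n$) and the Poincar\'e estimate; scaling back gives \eqref{eq:fsp2}.

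There is no deep obstacle here; the only real care is homogeneity bookkeeping so that the constant ends up depending on $n,s,p$ alone (handled by the reduction to $B_1$, which has fixed geometry) and, in the borderline case $sp=n$, the fact that $p^*_s$ is a fixed but arbitrary exponent in $(p,\infty)$, so the constant is permitted to depend on it as well. If one prefers a proof that does not invoke the embeddings as black boxes, the first inequality follows from the fractional Sobolev inequality on $\mathbb{R}^n$ together with a $W^{s,p}$-extension operator for the ball applied to $w$, while the second follows from a Campanato-type telescoping of the averages $\mean{B_\rho(x)\cap B_1}v$, using the Jensen--H\"older bound $\mean{B_\rho(x)\cap B_1}|v-(v)_{B_\rho(x)\cap B_1}|\,dz \le c\rho^{s-\frac{n}{p}}[v]_{W^{s,p}(B_1)}$ and the corkscrew property $|B_\rho(x)\cap B_1|\ge c(n)\rho^n$ of the ball; this is the step where one would have to do the (routine) work if the embeddings were not already available.
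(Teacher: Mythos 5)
The paper offers no proof of this lemma --- it is imported verbatim from \cite{NoMAAN,NoIHP} --- so there is no internal argument to compare against; your proposal supplies the standard self-contained proof and it is correct. The scaling reduction to $B_1$ is sound (both sides of \eqref{eq:fsp} pick up no net power of $r$ once the seminorm on the right is averaged over $B_r$, and both sides of \eqref{eq:fsp2} scale by $r^{-(s-n/p)}$), the fractional Poincar\'e inequality via Jensen and the crude bound $1\le 2^{n+sp}|x-y|^{-(n+sp)}$ on $B_1$ is exactly how one controls the $L^p$-part of the $W^{s,p}$-norm of $v-(v)_{B_1}$, and feeding this into the recorded embeddings finishes both claims. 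The one point to tighten: \eqref{eq:fsp} is asserted for all $s\in(0,1)$ and $p\ge1$ with no restriction on $sp$ versus $n$, and the paper does use it in the regime $sp>n$ (through the exponent $\kappa=p^*_s/p$ in Lemmas~\ref{lem:grow} and \ref{lem:bdd}), whereas you invoke the embedding $W^{s,p}(B_1)\hookrightarrow L^{p^*_s}(B_1)$ only ``for $sp\le n$''. This is not a real gap: when $sp>n$ the same chain closes after inserting $W^{s,p}(B_1)\hookrightarrow C^{0,s-\frac{n}{p}}(B_1)\hookrightarrow L^{\infty}(B_1)\hookrightarrow L^{p^*_s}(B_1)$ on the bounded ball, with the constant then also depending on the (arbitrary but fixed) choice of $p^*_s\in(p,\infty)$, exactly as you already noted for the borderline case $sp=n$.
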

Recalling \eqref{eq:ftnal0} and \eqref{eq:AOmega}, we see that
\begin{align*}
\mathcal{A}(\Omega)\subset W^{s,p}(\Omega).
\end{align*}
Also, \eqref{eq:fsp} implies that
\begin{align*}
\mathcal{A}(\Omega)\subset L^q(\Omega)\quad\text{if}\,\,
\begin{cases}
p \le q\leq\frac{np}{n-sp}\quad&\text{when}\,\, sp<n,\\
p \le q<\infty&\text{when}\,\,sp\geq n.
\end{cases}
\end{align*}

We next recall the tail space and tail given in \eqref{eq:tailsp} and \eqref{eq:tail}, respectively. Note that our definition of nonlocal tail is slightly different from those in \cite{C1,DKP14,DKP16}. 
Observe that if $v\in L^{q_0}(\ern)$ for some $q_0\geq p-1$, or if $v\in L^{p-1}(B_{R}(0))\cap L^{\infty}(\ern\setminus B_{R}(0))$ for some $R>0$, then $v\in L^{p-1}_{sp}(\ern)$. In particular, we have $W^{s,p}(\ern)\subset L^{p-1}_{sp}(\ern)$.
From the inequality
\begin{align*}
\dfrac{1+|x|}{|x-x_0|}\leq\dfrac{1+|x-x_0|+|x_0|}{|x-x_0|}\leq 1+\frac{1+|x_0|}{R} \qquad \text{for } x \in \mathbb{R}^{n}\setminus B_{R}(x_{0}),
\end{align*}
we have $\tail(v;x_0,R)<\infty$ for any $v\in L^{p-1}_{sp}(\ern)$ and $B_R(x_0)\subset\ern$.
If the center $x_0$ is not important, then we omit it and simply write $\tail(v;x_{0},R) \equiv \tail(v;R)$. 

\subsection{Useful lemmas}
We collect some inequalities which will be used in the proof of main theorems. The following lemma will be used in the proof of Theorem \ref{thm:bdd}; its proof is essentially the same as that of \cite[Lemma~2.4]{BOS}.

\begin{lemma}\label{lem:sp2}
Let the constants $s,p$ and $q$ satisfy \eqref{eq:spq} and \eqref{eq:pq}. Then for any $f\in W^{s,p}(B_r)$ and any constant $L_0\geq 0$, we have
\begin{equation*}
\begin{aligned}
\mean{B_r}\left(\left|\dfrac{f}{r^s}\right|^p+L_0\left|\dfrac{f}{r}\right|^q\right)\,dx&\leq cL_0r^{(s-1)q}\left(\mean{B_r}\int_{B_r}\dfrac{|f(x)-f(y)|^p}{|x-y|^{n+sp}}\,dxdy\right)^{\frac{q}{p}}\\
&\quad +c\left(\dfrac{|\supp f|}{|B_r|}\right)^{\frac{sp}{n}}\mean{B_r}\int_{B_r}\dfrac{|f(x)-f(y)|^p}{|x-y|^{n+sp}}\,dxdy\\
&\quad+c\left(\dfrac{|\supp f|}{|B_r|}\right)^{p-1}\mean{B_r}\left(\left|\dfrac{f}{r^s}\right|^p+L_0\left|\dfrac{f}{r}\right|^q\right)\,dx
\end{aligned}
\end{equation*}
for some constant $c=c(n,s,p,q)$ independent of $L_0$.
\end{lemma}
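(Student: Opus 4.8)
The plan is to prove Lemma~\ref{lem:sp2} by interpolating the two terms on the left using the fractional Sobolev--Poincar\'e inequality \eqref{eq:fsp} together with a support-truncation argument, exactly following the blueprint of \cite[Lemma~2.4]{BOS} but keeping track of the local $W^{1,q}$-type term. First I would deal with the genuinely nonlocal term $\mean{B_r}|f/r^s|^p\,dx$. Writing $p = \theta \cdot 1 + (1-\theta)p^*_s/\,$... more precisely, since $p < p^*_s$ when $sp<n$, I would use H\"older's inequality on $B_r$ with the pair of exponents $p^*_s/p$ and its conjugate to get
\[
\mean{B_r}\left|\frac{f}{r^s}\right|^p\,dx \le \left(\frac{|\supp f|}{|B_r|}\right)^{1-\frac{p}{p^*_s}}\left(\mean{B_r}\left|\frac{f}{r^s}\right|^{p^*_s}\,dx\right)^{\frac{p}{p^*_s}},
\]
and then estimate the last factor by \eqref{eq:fsp}; note $1-\frac{p}{p^*_s} = \frac{sp}{n}$ exactly, which produces the $\big(|\supp f|/|B_r|\big)^{sp/n}$ weight in the second line of the claimed estimate. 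One has to be slightly careful replacing $f-(f)_{B_r}$ by $f$ in \eqref{eq:fsp}: since $f$ is supported in a subset of $B_r$, $|(f)_{B_r}| \le \big(|\supp f|/|B_r|\big)^{1-1/p}\big(\mean{B_r}|f|^p\big)^{1/p}$, so the mean-value correction gets absorbed into the third term on the right (the one with the $(p-1)$-power weight), which is where that last term comes from. When $sp=n$ one instead picks any finite $q$, uses a slightly subcritical Sobolev exponent, and the same bookkeeping goes through with harmless constants.

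Next I would handle the local term $L_0\mean{B_r}|f/r|^q\,dx$. The idea is to first pass to the fractional scale: estimate $\mean{B_r}|f/r|^q\,dx$ by writing $|f/r|^q = r^{(s-1)q}|f/r^s|^q$ and then bounding the $L^q$ average of $f/r^s$. Since $q$ may exceed $p^*_s$ is \emph{not} allowed by \eqref{eq:pq} — we have $q \le p^*_s$ — I can again invoke \eqref{eq:fsp} but with the observation that the $L^q$ norm is controlled by the $L^{p^*_s}$ norm when $q\le p^*_s$, up to a volume factor, and in fact when $q = p^*_s$ directly by \eqref{eq:fsp}. Raising \eqref{eq:fsp} to the $q/p$ power (note $p^*_s \ge q \ge p$) and using that the Gagliardo seminorm appears, I obtain
\[
\mean{B_r}\left|\frac{f}{r^s}\right|^q\,dx \le c\, r^{sq}\left(\mean{B_r}\int_{B_r}\frac{|f(x)-f(y)|^p}{|x-y|^{n+sp}}\,dxdy\right)^{\frac{q}{p}} + (\text{mean-value correction}),
\]
so that multiplying by $L_0 r^{(s-1)q}/r^{sq} = L_0 r^{-q}$... i.e. collecting the powers of $r$, the $r^{sq}$ from the estimate combines with the $r^{-q}$ to leave $r^{(s-1)q}$, which is precisely the exponent in the first term of the claim. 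The mean-value corrections again feed the third term with weight $\big(|\supp f|/|B_r|\big)^{p-1}$.

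Collecting the two estimates and using that all the weights $\big(|\supp f|/|B_r|\big)^{\beta}$ with $\beta \in \{sp/n, p-1\}$ are bounded by $1$ (so no absorption into the left side is actually needed — the third term on the right is genuinely on the right), I get the asserted inequality with $c = c(n,s,p,q)$ and, crucially, independent of $L_0$ because $L_0$ appears linearly and multiplies both a left-hand term and a right-hand term in a matched way. The main obstacle I anticipate is the bookkeeping of the powers of $r$ and of the volume ratio $|\supp f|/|B_r|$ so that the three terms come out with exactly the stated exponents $(s-1)q$, $sp/n$, and $p-1$; in particular one must resist absorbing the last term and instead verify that $\big(|\supp f|/|B_r|\big)^{p-1} \le 1$ suffices for the argument downstream (it does, since this lemma is used inside a De Giorgi iteration where that factor is made small by choosing the truncation level). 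The degenerate case $sp=n$ needs a separate but routine adjustment of the Sobolev exponent, and $sp>n$ does not occur here since \eqref{eq:pq} restricts to $sp\le n$.
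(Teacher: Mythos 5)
Your approach — split the left-hand side into the $p$-term and the $q$-term, use H\"older with exponents $p^*_s/p$ and its conjugate to pull out the factor $\left(|\supp f|/|B_r|\right)^{1-p/p^*_s} = \left(|\supp f|/|B_r|\right)^{sp/n}$, then invoke the fractional Sobolev--Poincar\'e inequality \eqref{eq:fsp}, and absorb the average $(f)_{B_r}$ via another H\"older into the $(p-1)$-weighted term — is correct and is indeed the mechanism behind \cite[Lemma~2.4]{BOS}, which the paper cites without reproducing. Two small points deserve attention. First, your displayed inequality for the $q$-term carries a spurious factor $r^{sq}$: raising \eqref{eq:fsp} to the power $q/p$ and combining with Jensen (using $q\le p^*_s$) gives directly
\[
\mean{B_r}\left|\frac{f-(f)_{B_r}}{r^s}\right|^q\,dx \le c\left(\mean{B_r}\int_{B_r}\frac{|f(x)-f(y)|^p}{|x-y|^{n+sp}}\,dxdy\right)^{q/p},
\]
so multiplication by $L_0 r^{(s-1)q}$ (not $L_0 r^{-q}$) already yields the first term of the lemma; your error cancels in the end, but the intermediate step is dimensionally off. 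Also note that the average correction for the $q$-term comes out with exponent $q-1$, not $p-1$; you need the observation $q\ge p$ and $|\supp f|/|B_r|\le 1$ to weaken it to $(p-1)$. Second, your treatment of $sp=n$ is genuinely hand-wavy: in that case $p^*_s$ is any fixed exponent in $(q,\infty)$, the H\"older argument produces the weight $\left(|\supp f|/|B_r|\right)^{1-p/p^*_s}$ with $1-p/p^*_s<1=sp/n$, which (since the base is $\le 1$) is a \emph{larger} factor than the claimed $(|\supp f|/|B_r|)^{sp/n}$. One therefore proves the lemma with this smaller, $p^*_s$-dependent exponent — this is still a positive exponent and suffices for the De Giorgi iteration in Theorem~\ref{thm:bdd}, but it is worth being explicit that the literal exponent $sp/n$ is only attained when $sp<n$.
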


The following lemma is originated from \cite{D1} and used in \cite{BCM}.
\begin{lemma}\label{lem:iso}
Let $u\in W^{1,1}(B)$ for some ball $B\subset\ern$ and $m,l\in\er$ with $m<l$. Then we have
\begin{align*}
(l-m)|B\cap\{u\leq m\}|^{1-\frac{1}{n}}\leq \dfrac{c(n)|B|}{|B\cap\{u\geq l\}|}\int_{B\cap\{m<u\leq l\}}|Du|\,dx.
\end{align*}
\end{lemma}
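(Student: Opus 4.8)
The plan is to run the classical De~Giorgi isoperimetric argument: truncate $u$ to isolate the ``middle'' set $\{m<u\le l\}$, and feed the truncation into the pointwise (Riesz potential) form of the Poincar\'e inequality on a ball. Throughout write $\rho$ for the radius of $B$ and set
\[ A_m := B\cap\{u\le m\}, \qquad A_l := B\cap\{u\ge l\}, \qquad D := B\cap\{m<u\le l\}. \]
We may assume $|A_m|>0$ and $|A_l|>0$, since otherwise either the left-hand side vanishes or the right-hand side equals $+\infty$, and there is nothing to prove.

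First I would consider the truncation $v := \min\{(u-m)_+,\,l-m\}\in W^{1,1}(B)$, so that $0\le v\le l-m$ in $B$, $v\equiv 0$ a.e.\ on $A_m$, $v\equiv l-m$ a.e.\ on $A_l$, and $Dv = Du\,\chi_{\{m<u<l\}}$ a.e.\ in $B$ (using that $Du=0$ a.e.\ on the level set $\{u=l\}$); in particular $\int_B|Dv|\,dy \le \int_D|Du|\,dy$. Next I would invoke the standard pointwise Poincar\'e estimate: for a ball $B$ of radius $\rho$ and any $w\in W^{1,1}(B)$,
\[ |w(x)-(w)_B| \le \frac{c(n)\,\rho^n}{|B|}\int_B\frac{|Dw(y)|}{|x-y|^{n-1}}\,dy = c(n)\int_B\frac{|Dw(y)|}{|x-y|^{n-1}}\,dy \qquad \text{for a.e.\ } x\in B, \]
which follows by integrating $w$ along segments issued from $x$ and averaging over $B$, or may simply be quoted. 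Applying this to $w=v$, and using that $v\ge 0$ with $v\equiv l-m$ on $A_l$ (hence $(v)_B\ge (l-m)|A_l|/|B|$) together with $v(x)=0$ for a.e.\ $x\in A_m$, yields for a.e.\ $x\in A_m$
\[ \frac{(l-m)|A_l|}{|B|}\le (v)_B = |v(x)-(v)_B| \le c(n)\int_D\frac{|Du(y)|}{|x-y|^{n-1}}\,dy. \]

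Finally I would integrate this in $x$ over $A_m$ and apply Fubini, obtaining
\[ \frac{(l-m)|A_l|}{|B|}\,|A_m| \le c(n)\int_D|Du(y)|\left(\int_{A_m}\frac{dx}{|x-y|^{n-1}}\right)dy . \]
Since $t\mapsto t^{1-n}$ is decreasing, for fixed $y$ the integral $\int_{A_m}|x-y|^{1-n}\,dx$ is maximized, among all sets of measure $|A_m|$, by the ball $B_\sigma(y)$ with $|B_\sigma|=|A_m|$; a one-line polar-coordinate computation gives $\int_{B_\sigma(y)}|x-y|^{1-n}\,dx = c(n)\sigma = c(n)|A_m|^{1/n}$. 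Plugging this in, dividing by $|A_m|^{1/n}$, and then by $|A_l|/|B|$, produces exactly
\[ (l-m)|A_m|^{1-\frac1n}\le \frac{c(n)|B|}{|A_l|}\int_{B\cap\{m<u\le l\}}|Du|\,dx. \]

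The one genuine point requiring care is the pointwise Poincar\'e step: one must make sure the constant there is dimensional (for a ball this is automatic, the relevant eccentricity being fixed) and that the a.e.\ statement is legitimately applied on the positive-measure set $A_m$. Everything else—the properties of the truncation $v$ and the rearrangement bound for $\int_{A_m}|x-y|^{1-n}\,dx$—is routine bookkeeping.
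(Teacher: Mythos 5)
Your proof is correct: the paper itself gives no proof of this lemma (it only cites De Giorgi's original paper and \cite{BCM}), and your argument — truncating $u$ between the levels $m$ and $l$, applying the pointwise Riesz-potential form of the Poincar\'e inequality on the ball, evaluating on the zero set $B\cap\{u\le m\}$, and closing with Fubini plus the rearrangement bound $\int_{A_m}|x-y|^{1-n}\,dx\le c(n)|A_m|^{1/n}$ — is exactly the classical De Giorgi isoperimetric argument. All the steps you flag as needing care (the dimensional constant in the pointwise Poincar\'e inequality on a ball, $Du=0$ a.e.\ on $\{u=l\}$, and the degenerate cases $|A_m|=0$ or $|A_l|=0$) are handled adequately.
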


Finally, we need the iteration lemma from \cite[Lemma 7.1]{Giu1}.
\begin{lemma}\label{lem:it}
Let $\{y_i\}^{\infty}_{i=0}$ be a sequence of nonnegative numbers with the inequality
\begin{align*}
y_{i+1}\leq b_1b_2^iy_i^{1+\beta},\quad i=0,1,2,\dots
\end{align*}
for some constants $b_1,\beta>0$ and $b_2>1$. If
\begin{align*}
y_0\leq b_1^{-1/\beta}b_2^{-1/\beta^2},
\end{align*}
then $y_i\rightarrow 0$ as $i\rightarrow \infty$.
\end{lemma}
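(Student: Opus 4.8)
The plan is to show that the hypothesis forces \emph{fast geometric decay}, and in fact to prove by induction the explicit bound
\[
y_i \le b_2^{-i/\beta}\, y_0 \qquad \text{for every } i \ge 0.
\]
Once this is established, the conclusion is immediate: since $b_2 > 1$ we have $b_2^{-i/\beta} \to 0$, and $y_0$ is a fixed finite number, so $y_i \to 0$ as $i \to \infty$.

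For the induction, the base case $i=0$ is the trivial equality $y_0 \le y_0$. For the inductive step, assume $y_i \le b_2^{-i/\beta} y_0$. Substituting into the given recursion and collecting the powers of $b_2$ (using the identity $i - i(1+\beta)/\beta = -i/\beta$),
\[
y_{i+1} \le b_1 b_2^{i} y_i^{1+\beta} \le b_1 b_2^{i} \bigl(b_2^{-i/\beta} y_0\bigr)^{1+\beta} = b_1\, b_2^{-i/\beta}\, y_0^{1+\beta} = \bigl(b_1 y_0^{\beta}\bigr)\, b_2^{-i/\beta} y_0 .
\]
Thus it suffices to check $b_1 y_0^{\beta} \le b_2^{-1/\beta}$, for then $y_{i+1} \le b_2^{-1/\beta} b_2^{-i/\beta} y_0 = b_2^{-(i+1)/\beta} y_0$, which is exactly what is needed to continue the induction. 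Raising both sides of the smallness hypothesis $y_0 \le b_1^{-1/\beta} b_2^{-1/\beta^2}$ to the power $\beta$ gives $y_0^{\beta} \le b_1^{-1} b_2^{-1/\beta}$, i.e. $b_1 y_0^{\beta} \le b_2^{-1/\beta}$; so the required inequality holds and the induction closes.

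There is essentially no obstacle here: the argument is a one-line induction, and the only care needed is bookkeeping of the exponents of $b_2$ and noticing that the precise threshold $y_0 \le b_1^{-1/\beta} b_2^{-1/\beta^2}$ is calibrated exactly so that each step contributes the contracting factor $b_2^{-1/\beta} < 1$ instead of letting errors accumulate. (Equivalently, one could rescale $z_i \coloneqq b_1^{1/\beta} b_2^{1/\beta^2} b_2^{i/\beta} y_i$, which turns the recursion into $z_{i+1} \le z_i^{1+\beta}$ with $z_0 \le 1$, whence $z_i \le z_0^{(1+\beta)^i} \le 1$ and the decay of $y_i$ follows; but the direct induction above is the cleanest presentation.)
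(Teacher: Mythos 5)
Your proof is correct: the induction hypothesis $y_i \le b_2^{-i/\beta}y_0$ closes exactly because the smallness condition gives $b_1 y_0^{\beta}\le b_2^{-1/\beta}$, and the exponent bookkeeping $i-i(1+\beta)/\beta=-i/\beta$ is right. The paper does not prove this lemma but cites it from Giusti's book, and your argument is precisely that standard proof, so nothing further is needed.
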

\section{Caccioppoli estimates and local boundedness}\label{sec:3}
First, we show a Caccioppoli type estimate with tail, which plays an important role throughout the paper. 
\begin{lemma}\label{lem:cacc}
Let $u\in\mathcal{A}(\Omega)\cap L^{p-1}_{sp}(\Omega)$ be a minimizer of $\mathcal{E}$ under the assumptions \eqref{eq:spq}-\eqref{eq:ker}. Then for any ball $B_{2r}=B_{2r}(x_0)\Subset\Omega$ and $0<\rho<\sigma\leq r$ we have
\begin{equation*}
\begin{aligned}
\int_{B_{\rho}}&\int_{B_\rho}\dfrac{|w_{\pm}(x)-w_{\pm}(y)|^p}{|x-y|^{sp}}\dfrac{dxdy}{|x-y|^n}+\int_{B_{\rho}}a(x)|Dw_{\pm}|^q\,dx\\
&\quad+\int_{B_{\rho}}w_{\pm}(x)\left[\int_{\ern}\dfrac{w_{\mp}^{p-1}(y)}{|x-y|^{n+sp}}\,dy\right]\,dx\\
&\quad\quad\quad\leq \dfrac{c}{(\sigma-\rho)^{p}}\int_{B_{\sigma}}\int_{B_{\sigma}}\dfrac{|w_{\pm}(x)+w_{\pm}(y)|^p}{|x-y|^{(s-1)p}}\dfrac{dxdy}{|x-y|^n}+\dfrac{c}{(\sigma-\rho)^q}\int_{B_{\sigma}}a(x)\left|w_{\pm}\right|^q\,dx\\
&\quad\quad\quad\quad+\frac{c\sigma^{n+sp}}{(\sigma-\rho)^{n+sp}}
[\tail(w_{\pm};\sigma)]\int_{B_{\sigma}}w_{\pm}\,dx
\end{aligned}
\end{equation*}
for some $c=c(n,s,p,q,\Lambda)$, where $w_{\pm} \coloneqq (u-k)_{\pm}$  with $k\geq 0$.
\end{lemma}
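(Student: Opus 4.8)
The plan is to use the minimality of $u$ directly, comparing $u$ with a suitably truncated competitor. Fix $B_{2r} = B_{2r}(x_0) \Subset \Omega$ and $0 < \rho < \sigma \le r$, and work with the $+$ sign (the $-$ case being symmetric after replacing $u$ by $-u$). Choose a cutoff $\tau \in C_c^\infty(B_\sigma)$ with $0 \le \tau \le 1$, $\tau \equiv 1$ on $B_\rho$, and $|D\tau| \le c/(\sigma-\rho)$, and define the competitor $v \coloneqq u - \tau^q w_+$ where $w_+ = (u-k)_+$. Since $\tau$ is supported in $B_\sigma \Subset \Omega$, we have $v = u$ outside $\Omega$, so $\mathcal{E}(u;\Omega) \le \mathcal{E}(v;\Omega)$ is admissible. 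Then I would subtract $\mathcal{E}(v)$ from $\mathcal{E}(u)$ and estimate each of the two terms (the nonlocal Gagliardo term and the local $a(x)F(x,Du)$ term) separately, extracting the left-hand side and absorbing everything else into the right-hand side.

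For the \textbf{local term}: on $\{u \le k\}$ one has $w_+ = 0$ and $Dv = Du$, so there is no contribution; on $\{u > k\}$, $Dv = (1-\tau^q)Du - q\tau^{q-1}w_+ D\tau$. Using the growth bounds \eqref{eq:F}, the difference $\int_{B_\sigma} a(x)[F(x,Du) - F(x,Dv)]\,dx$ produces, after the elementary inequality $|A-B|^q \ge 2^{1-q}|A|^q - |B|^q$ type manipulations (or convexity together with Young's inequality), a lower bound of order $\int_{B_\rho} a(x)|Dw_+|^q\,dx$ minus a term controlled by $(\sigma-\rho)^{-q}\int_{B_\sigma} a(x)|w_+|^q\,dx$; the cross terms are handled by Young's inequality with a small parameter to be absorbed into $\int_{B_\sigma}a(x)|Dw_+|^q$ on the larger ball (a standard hole-filling / iteration on $\rho,\sigma$ via Lemma~\ref{lem:it}-type absorption, or simply by first proving it with an extra $\theta\int_{B_\sigma}a|Dw_+|^q$ on the right and then iterating).

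For the \textbf{nonlocal term}: writing $U(x,y) = u(x)-u(y)$ and $V(x,y) = v(x)-v(y)$, one needs a pointwise inequality of the form
\begin{align*}
|U(x,y)|^p - |V(x,y)|^p \ge c_p^{-1}\frac{|w_+(x)-w_+(y)|^p}{}\bigl(\text{localized by }\tau\bigr) + 2 w_+(x)\,w_-(y)^{p-1}\chi\bigl(\cdot\bigr) - (\text{error}),
\end{align*}
which is precisely the kind of algebraic estimate used for the fractional $p$-Laplacian: one splits $\mathcal{C}_\sigma$ into $B_\sigma\times B_\sigma$, and $(\ern\setminus B_\sigma)\times B_\sigma$ (and its symmetric copy). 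On $B_\sigma\times B_\sigma$ the pointwise inequality, multiplied by $K_{sp}$ and using \eqref{eq:ker} together with $|x-y|^{-n-sp}\eqsim |x-y|^{-n}|x-y|^{-sp}$, yields the Gagliardo term $\int_{B_\rho}\int_{B_\rho}|w_+(x)-w_+(y)|^p|x-y|^{-n-sp}\,dxdy$ on the left and the error $(\sigma-\rho)^{-p}\int_{B_\sigma}\int_{B_\sigma}|w_+(x)+w_+(y)|^p|x-y|^{-(s-1)p-n}\,dxdy$ (coming from the cutoff, where a discrete factor $|\tau(x)-\tau(y)|\le |D\tau|\,|x-y|$ converts the $sp$-weight into an $(s-1)p$-weight) on the right. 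On the mixed region $x\in B_\sigma$, $y\notin B_\sigma$, the competitor change is $v(x)-v(y) = u(x)-u(y) - \tau^q(x)w_+(x)$; expanding and using that for $y\notin B_\sigma$ one has $|x-y| \gtrsim \sigma-\rho$ on the support of $\tau$ combined with the monotonicity of $t\mapsto t^p$, one extracts the positivity term $\int_{B_\rho} w_+(x)\bigl[\int_{\ern} w_-^{p-1}(y)|x-y|^{-n-sp}\,dy\bigr]\,dx$ (using $(u(x)-u(y))_+ \ge w_+(x) + w_-(y) - $ stuff, or more carefully the inequality $-|a-b|^p + |a|^p \ge \dots$ applied with $a = u(x)-u(y)$, $b = w_+(x)$) and an error term estimated by $\frac{\sigma^{n+sp}}{(\sigma-\rho)^{n+sp}}\tail(w_+;\sigma)\int_{B_\sigma}w_+\,dx$, precisely because $\int_{\ern\setminus B_\sigma}|v|^{p-1}|x-x_0|^{-n-sp}\,dx$ controls, up to the displayed geometric constant, $\int_{\ern\setminus B_\sigma}|v|^{p-1}|x-y|^{-n-sp}\,dx$ for $x$ in the support of $\tau$.

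The \textbf{main obstacle} is the algebra on the mixed region: one must simultaneously (i) produce the positive cross-term $w_+(x)w_-(y)^{p-1}$ with the correct sign and constant, which requires a careful case analysis according to the signs of $u(x)-k$ and $u(y)-k$ (this is where the "$w_\mp$" appears and where one uses $(u(x)-k)_+ + (u(y)-k)_- \le (u(x)-u(y))_+$ on the relevant set), and (ii) bound the genuinely nonlocal error by the tail of $w_+$ rather than of $u$, which forces one to absorb the "$k$-part" and to use $|u(y)| \le |w_+(y)| + |w_-(y)| + k$ on $\ern\setminus B_\sigma$ together with the elementary splitting $|a+b|^{p-1}\le c(|a|^{p-1}+|b|^{p-1})$; the $k^{p-1}$-piece and the $w_-^{p-1}$-piece then need to be reorganized so that what survives on the right is exactly $\tail(w_+;\sigma)$ times $\int_{B_\sigma} w_+\,dx$ with the stated geometric prefactor. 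Once these pointwise inequalities are in hand, the rest is a routine combination: add the local and nonlocal estimates, absorb the small multiples of the left-hand-side terms that appear on the right by the standard iteration Lemma~\ref{lem:it} (applied to the function $\sigma \mapsto$ [left-hand side on $B_\sigma$]), and relabel constants to obtain the claimed inequality with $c = c(n,s,p,q,\Lambda)$.
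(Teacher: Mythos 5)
Your proposal follows essentially the same route as the paper: compare $u$ with the truncated competitor $v=u-\phi\,w_+$ (the extra power $\tau^q$ on the cutoff is an immaterial variation), split the energy difference into the diagonal part on $B_\sigma\times B_\sigma$, the mixed part, and the local $a(x)F(x,Du)$ part, extract the Gagliardo seminorm, the positivity term $w_+(x)w_-(y)^{p-1}$ and the tail error by the standard fractional Caccioppoli algebra (the paper defers exactly this to \cite[Proposition~7.5]{C1}), handle the local term by Young's inequality, and finally absorb the surviving annulus terms by hole-filling and iteration on the radii. One correction: the absorption step cannot be carried out with Lemma~\ref{lem:it}, which is the superlinear sequence lemma $y_{i+1}\le b_1b_2^iy_i^{1+\beta}$ used for De Giorgi level-set iteration; what is needed is the linear radii-iteration lemma (if $\Phi(\rho_1)\le\theta\Phi(\sigma_1)+A(\sigma_1-\rho_1)^{-\alpha}+B$ with $\theta<1$ then $\Phi(\rho)\le c[A(\sigma-\rho)^{-\alpha}+B]$), which is \cite[Lemma~2.5]{DM} as invoked in the paper; your verbal description of the hole-filling is otherwise correct.
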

\begin{proof}
We only prove the estimate for $w_+$, since the proof of the one for $w_-$ is similar. Choose two radii $\rho_{1},\sigma_{1}$ satisfying $\rho\leq\rho_1<\sigma_1\leq\sigma$ and then a cut-off function $\phi\in C^{\infty}_0(B_{\frac{\sigma_1+\rho_1}{2}})$ satisfying $0\leq\phi\leq 1$, $\phi=1$ on $B_{\rho_1}$ and $|D\phi|\leq\frac{4}{\sigma_1-\rho_1}$. We test \eqref{eq:mini} with $v=u-\phi w_+$. Then, since $u=v$ in $\ern\setminus B_{\sigma}$, we have
\begin{align}\label{eq:ca1.1}
\begin{split}
0&\leq \iint_{\mathcal{C}_{\Omega}}(|v(x)-v(y)|^{p}-|u(x)-u(y)|^{p})K_{sp}(x,y)\,dxdy\\
&\quad+\int_{\Omega}a(x)(F(x,Dv)-F(x,Du))\,dx\\
&\leq \int_{B_{\sigma}}\int_{B_{\sigma}}(|v(x)-v(y)|^{p}-|u(x)-u(y)|^{p})K_{sp}(x,y)\,dxdy\\
&\quad+2\int_{\ern\setminus B_{\sigma}}\int_{B_{\sigma}}(|v(x)-v(y)|^{p}-|u(x)-u(y)|^{p})K_{sp}(x,y)\,dxdy\\
&\quad+\int_{B_{\sigma}}a(x)(F(x,Dv)-F(x,Du))\,dx\\
&\eqqcolon I_1+I_2+I_3.
\end{split}
\end{align} 

Both $I_1$ and $I_2$ are estimated in the same way as in the proof of \cite[Proposition~7.5]{C1}:
\begin{align}\label{eq:ca2}
\begin{split}
I_1+I_2
&\leq c\int_{B_{\sigma_1}\setminus B_{\rho_1}}\int_{B_{\sigma_1}\setminus B_{\rho_1}}\dfrac{|w_+(x)-w_+(y)|^p}{|x-y|^{sp}}\dfrac{dxdy}{|x-y|^n}\\
&\quad+\dfrac{c}{(\sigma_1-\rho_1)^p}\int_{B_{\sigma}}\int_{B_{\sigma}}\dfrac{\left|w_+(x)+w_+(y)\right|^p}{|x-y|^{(s-1)p}}\dfrac{dxdy}{|x-y|^n}\\
&\quad-\frac{1}{c}\int_{B_{\rho_1}}\int_{B_{\rho_1}}\dfrac{|w_+(x)-w_+(y)|^p}{|x-y|^{sp}}\dfrac{dxdy}{|x-y|^n}\\
&\quad-\frac{1}{c}\int_{B_{\rho_1}}\int_{\ern}\dfrac{w_{-}(y)^{p-1}w_+(x)}{|x-y|^{sp}}\dfrac{dydx}{|x-y|^n}\\
&\quad +c\frac{\sigma^{n+sp}}{(\sigma_1-\rho_1)^{n+sp}}\left[\tail(w_{+};x_{0},\sigma)\right]\int_{B_{\sigma}}w_+\,dx
\end{split}
\end{align}
with $c=c(n,s,p,\Lambda)$. 

For $I_3$, we note that $\supp(u-v) \subset A^{+}(k,\sigma_{1}) \coloneqq \{x \in B_{\sigma_{1}}: u(x) \ge k \}$, which implies
\begin{align*}
\begin{split}
I_3&=\int_{A^{+}(k,\sigma_{1})}a(x)(F(x,Dv)-F(x,Du))\,dx\\
&\leq \Lambda\int_{A^{+}(k,\sigma_{1})}a(x)|Dv|^q\,dx-\Lambda^{-1}\int_{A^{+}(k,\sigma_1)}a(x)|Du|^q\,dx\\
&\leq \Lambda\int_{A^{+}(k,\sigma_1)}a(x)|Dv|^q\,dx-\Lambda^{-1}\int_{A^{+}(k,\sigma_1)}a(x)|Dw_+|^q\,dx.
\end{split}
\end{align*}
Here, we observe that
\begin{align*}
|Dv|^q & = |Du-(D\phi)w_+-\phi (Dw_+)|^q\\
& = |(1-\phi)Dw_+-(D\phi)w_+|^q \\
& \leq c|(1-\phi)Dw_+|^q+c\left|\dfrac{w_+}{\sigma_1-\rho_1}\right|^q
\end{align*}
holds in $A^{+}(k,\sigma_{1})$. 
In turn, we have
\begin{align}\label{eq:ca6}
\begin{split}
I_3&\leq c\int_{B_{\sigma_1}}a(x)\left(|(1-\phi)Dw_+|^q+\left|\dfrac{w_+}{\sigma_1-\rho_1}\right|^q\right)\,dx-\Lambda^{-1}\int_{B_{\sigma_1}}a(x)|Dw_+|^q\,dx\\
&\leq c\int_{B_{\sigma_1}\setminus B_{\rho_1}}a(x)|Dw_+|^q\,dx+c\int_{B_{\sigma}}a(x)\left|\dfrac{w_+}{\sigma_1-\rho_1}\right|^q\,dx-\Lambda^{-1}\int_{B_{\sigma_1}}a(x)|Dw_+|^q\,dx
\end{split}
\end{align}
for a constant $c = c(n,q,\Lambda)$. 

Combining the above estimates \eqref{eq:ca1.1}, \eqref{eq:ca2} and \eqref{eq:ca6}, we find
\begin{align}\label{eq:ca7}
\begin{split}
\int_{B_{\rho_1}}&\int_{B_{\rho_1}}\dfrac{|w_+(x)-w_+(y)|^p}{|x-y|^{sp}}\dfrac{dxdy}{|x-y|^n}+\int_{B_{\rho_1}}a(x)|Dw_+|^q\,dx\\
&+\int_{B_{\rho_1}}\int_{\ern}\dfrac{w_{-}(y)^{p-1}w_+(x)}{|x-y|^{sp}}\dfrac{dydx}{|x-y|^n}\\
&\quad\leq c\left(\int_{B_{\sigma_1}\setminus B_{\rho_1}}\int_{B_{\sigma_1}\setminus B_{\rho_1}}\dfrac{|w_+(x)-w_+(y)|^p}{|x-y|^{sp}}\dfrac{dxdy}{|x-y|^n}+\int_{B_{\sigma_1}\setminus B_{\rho_1}}a(x)|Dw_+|^q\,dx\right)\\
&\quad\quad+\dfrac{c}{(\sigma_1-\rho_1)^p}\int_{B_{\sigma}}\int_{B_{\sigma}}\dfrac{\left|w_+(x)+w_+(y)\right|^p}{|x-y|^{(s-1)p}}\dfrac{dxdy}{|x-y|^n}\\
&\quad\quad+\dfrac{c}{(\sigma_1-\rho_1)^q}\int_{B_{\sigma}}a(x)\left|w_+\right|^q\,dx\\
&\quad\quad+\frac{c\sigma^{n+sp}}{(\sigma_1-\rho_1)^{n+sp}}[\tail(w_{+};x_{0},\sigma)]\int_{B_{\sigma}}w_+\,dx.
\end{split}	
\end{align}
Now, we define for $t>0$,
\begin{align*}
\Phi(t)&=\int_{B_{t}}\int_{B_{t}}\dfrac{|w_+(x)-w_+(y)|^p}{|x-y|^{sp}}\dfrac{dxdy}{|x-y|^n}\\
&\quad+\int_{B_{t}}a(x)|Dw_+|^q\,dx+\int_{B_{t}}\int_{\ern}\dfrac{w_{-}(y)^{p-1}w_+(x)}{|x-y|^{sp}}\dfrac{dydx}{|x-y|^n}.
\end{align*}
Then \eqref{eq:ca7} reads as
\begin{align*}
\Phi(\rho_1)\leq c(\Phi(\sigma_1)-\Phi(\rho_1))&+\dfrac{c}{(\sigma_1-\rho_1)^p}\int_{B_{\sigma}}\int_{B_{\sigma}}\dfrac{\left|w_+(x)+w_+(y)\right|^p}{|x-y|^{(s-1)p}}\dfrac{dxdy}{|x-y|^n}\\
&+\dfrac{c}{(\sigma_1-\rho_1)^q}\int_{B_{\sigma}}a(x)\left|w_+\right|^q\,dx\\
&+\frac{c\sigma^{n+sp}}{(\sigma_1-\rho_1)^{n+sp}}[\tail(w_{+};x_{0},\sigma)]\int_{B_{\sigma}}w_+\,dx
\end{align*}
with $c=c(n,s,p,q,\Lambda)$. Now, the technical lemma~\cite[Lemma 2.5]{DM} gives the conclusion.
\end{proof}

We now prove the local boundedness result in Theorem \ref{thm:bdd}.

\begin{proof}[Proof of Theorem \ref{thm:bdd}] Throughout the proof, we denote
\begin{align*}
H_0(t) \coloneqq t^p+\|a\|_{L^{\infty}}t^q\qquad (t\geq 0).
\end{align*}	
Fix a ball $B_r\equiv B_r(x_0)\Subset\Omega$ with $r\leq 1$. Let $r/2\leq\rho<\sigma\leq r$ and $k>0$. We define the upper level set
\begin{align*}
A^+(k,\rho)\coloneqq\{x\in B_{\rho}:u(x)\geq k\}.
\end{align*}
Applying Lemma \ref{lem:sp2} with $f\equiv (u-k)_+$, we obtain 
\begin{align}\label{eq:bdd1}
\begin{split}
\rho^{-sp}\mean{B_{\rho}}H_0(f)\,dx&\leq\mean{B_{\rho}}\left[\left(\dfrac{f}{\rho^s}\right)^p+\|a\|_{L^{\infty}}\left(\dfrac{f}{\rho}\right)^q\right]\,dx\\
&\leq c\|a\|_{L^{\infty}}\rho^{(s-1)q}\left(\mean{B_{\rho}}\int_{B_{\rho}}\dfrac{|f(x)-f(y)|^p}{|x-y|^{n+sp}}\,dxdy\right)^{\frac{q}{p}}\\
&\quad+c\left(\dfrac{|A^+(k,\rho)|}{|B_{\rho}|}\right)^{\frac{sp}{n}}\mean{B_{\rho}}\int_{B_{\rho}}\dfrac{|f(x)-f(y)|^p}{|x-y|^{n+sp}}\,dxdy\\
&\quad+c\left(\dfrac{|A^+(k,\rho)|}{|B_{\rho}|}\right)^{p-1}\mean{B_{\sigma}}\left[\left(\dfrac{f}{\rho^s}\right)^p+\|a\|_{L^{\infty}}\left(\dfrac{f}{\rho}\right)^q\right]\,dx.
\end{split}
\end{align}
For fixed $0<h<k$, we see that 
\begin{align*}
(u(x)-h)_+=u(x)-h\geq k-h\quad\text{and}\quad
(u(x)-h)_+=u(x)-h\geq u(x)-k=(u(x)-k)_+
\end{align*}
for $x\in A^+(k,\rho)\subset A^+(h,\rho)$. 
Then we find
\begin{align*}
\begin{split}
\mean{B_{\rho}}(u-k)_+\,dx&\leq\mean{B_{\rho}}(u-h)_+\left(\dfrac{(u-h)_+}{k-h}\right)^{p-1}\,dx\leq\dfrac{1}{(k-h)^{p-1}}\mean{B_{\sigma}}H_0((u-h)_+)\,dx
\end{split}
\end{align*}
and
\begin{align}\label{eq:bdd3}
\begin{split}
\dfrac{|A^+(k,\rho)|}{|B_{\rho}|}&\leq\dfrac{1}{|B_{\rho}|}\int_{A^+(k,\rho)}\dfrac{(u-h)^p_+}{(k-h)^p}\,dx\\
&\leq\dfrac{1}{(k-h)^p|B_{\rho}|}\int_{A^+(h,\rho)}H_0((u-h)_+)\,dx\\
&\leq\dfrac{1}{(k-h)^p}\mean{B_{\rho}}H_0((u-h)_+)\,dx.
\end{split}
\end{align}
By Lemma \ref{lem:cacc}, we have
\begin{equation*}
\begin{aligned}
\lefteqn{ \mean{B_{\rho}}\int_{B_{\rho}}\dfrac{|f(x)-f(y)|^p}{|x-y|^{n+sp}}\,dxdy }\\
&\leq\dfrac{c}{(\sigma-\rho)^p}\mean{B_{\sigma}}(u(x)-h)^p_+\int_{B_{\sigma}}\dfrac{1}{|x-y|^{n+(s-1)p}}\,dydx+\dfrac{c\|a\|_{L^{\infty}}}{(\sigma-\rho)^q}\mean{B_{\sigma}}(u-h)^q_+\,dx\\
&\quad+c\left(\frac{\sigma^{n+sp}}{(\sigma-\rho)^{n+sp}}\tail(f;\sigma)\right)\mean{B_{\sigma}}(u-h)_+\,dx\\
&\leq \dfrac{c\rho^{(1-s)p}}{(\sigma-\rho)^p}\mean{B_{\sigma}}(u-h)^p_+\,dx+\dfrac{c\|a\|_{L^{\infty}}}{(\sigma-\rho)^q}\mean{B_{\sigma}}(u-h)^q_+\,dx\\
&\quad+\frac{c\sigma^{n+sp}}{(\sigma-\rho)^{n+sp}}(\tail(f;\sigma))\mean{B_{\sigma}}(u-h)_+\,dx\\
&\leq\dfrac{c}{(\sigma-\rho)^q}\mean{B_{\sigma}}H_0((u-h)_+)\,dx+\frac{c\tail(f;\sigma)}{(\sigma-\rho)^{n+sp}}\mean{B_{\sigma}}(u-h)_+\,dx.
\end{aligned}
\end{equation*}
Recalling $f\equiv (u-k)_+$ and combining the above estimate with \eqref{eq:bdd1}--\eqref{eq:bdd3} yield
\begin{equation*}
\begin{aligned}
\lefteqn{ \rho^{-sp}\mean{B_{\rho}}H_0((u-k)_+)\,dx }\\
&\leq \dfrac{c\rho^{(s-1)q}}{(\sigma-\rho)^{q^2/p}}\left(\mean{B_{\sigma}}H_0((u-h)_+)\,dx\right)^{\frac{q}{p}}\\
&\quad+\frac{c}{(k-h)^{q/p'}}\dfrac{\rho^{(s-1)q}[\tail((u-k)_+;\sigma)]^{q/p}}{(\sigma-\rho)^{(n+sp)q/p}}\left(\mean{B_{\sigma}}H_0((u-h)_+)\,dx\right)^{\frac{q}{p}}\\
&\quad+\dfrac{c}{(k-h)^{sp^2/n}}\dfrac{1}{(\sigma-\rho)^q}\left(\mean{B_{\sigma}}H_0((u-h)_+)\,dx\right)^{1+\frac{sp}{n}}\\
&\quad+\dfrac{c\tail((u-k)_+;\sigma)}{(k-h)^{sp^2/n+p-1}(\sigma-\rho)^{n+sp}}\left(\mean{B_{\sigma}}H_0((u-k)_+)\,dx\right)^{1+\frac{sp}{n}}\\
&\quad+\dfrac{cr^{-q}}{(k-h)^{p(p-1)}}\left(\mean{B_{\sigma}}H_0((u-h)_+)\,dx\right)^p.
\end{aligned}
\end{equation*}

Now, for $i=0,1,2,\dots$ and $k_0>1$, we denote
\begin{align*}
\sigma_i \coloneqq \frac{r}{2}(1+2^{-i}),\quad k_i \coloneqq 2k_0(1-2^{-i-1})\quad\text{and}\quad y_i \coloneqq \int_{A^+(k_i,\sigma_i)}H_0((u-k_i)_+)\,dx.
\end{align*}
Since $H_0(u)\in L^1(\Omega)$ from \eqref{eq:fsp} and \eqref{eq:pq}, it follows that
\begin{align*}
y_0=\int_{A^+(k_0,r)}H_0((u-k_0)_+)\,dx\quad\longrightarrow\quad 0\quad\text{as}\quad k_0\rightarrow\infty.
\end{align*}
Consider a large number $k_0>1$ for which
\begin{align*}
y_i\leq y_{i-1}\leq\cdots\leq y_0\leq 1,\quad i=1,2,\dots.
\end{align*}
Then since $u\in L^{p-1}_{sp}(\ern)$ and so $\tail((u-k_i)_+;\sigma_i)\leq \tail(u;r/2)<\infty$, we obtain
\begin{equation*}
\begin{aligned}
y_{i+1}&\leq\tilde{c}\left(2^{iq^2/p}y_i^{q/p}+2^{i(q/p'+(n+sp)q/p)}y_i^{q/p}\right.\\
&\hspace{1cm}\left.+2^{i(sp^2/n+q)}y_i^{1+(sp/n)}+2^{i(sp^2/n+p+n+sp)}y_i^{1+(sp/n)}+2^{ip(p-1)}y_i^p\right)\\
&\leq \tilde{c}2^{\theta i}y_i^{1+\beta}
\end{aligned}
\end{equation*}
for some constant $\tilde{c}>0$ depending on $n,s,p,q,\Lambda,\|a\|_{L^{\infty}},r$ and $\tail(u;r/2)$, where
\begin{align*}
\theta=\max\left\{\frac{q^2}{p},\frac{q}{p'}+(n+sp)\frac{q}{p},\frac{sp^2}{n}+q,\frac{sp^2}{n}+p+n+sp,p(p-1)\right\}
\end{align*}
and
\begin{align*}
\quad\beta=\min\left\{\frac{q}{p}-1,\frac{sp}{n},p-1\right\}.
\end{align*}

Now we select a constant $k_0$ sufficiently large to satisfy
\begin{align*}
y_0\leq\tilde{c}^{-1/\beta}2^{-\theta/\beta^2}.
\end{align*}
Then Lemma \ref{lem:it} yields
\begin{align*}
y_{\infty}=\int_{A^+(2k_0,r/2)}H_0((u-2k_0)_+)\,dx=0,
\end{align*}
and so $u\leq k_0$ a.e. in $B_{r/2}$.

Applying the same argument to $-u$, we finally obtain $u\in L^{\infty}(B_{r/2})$. 
\end{proof}

\section{Expansion of positivity}\label{sec:4}
Throughout this section we assume that $K_{sp}$ is symmetric and satisfies \eqref{eq:ker}. We suppose \eqref{eq:F} for $F$, and let $a(\cdot)$ satisfy \eqref{eq:a} and \eqref{eq:hol.a}. Also, assume that $s,p,q$ and $\alpha$ satisfy \eqref{eq:spq} and \eqref{eq:qspa2}.

\begin{lemma}\label{lem:DG}
Let $u \in \mathcal{A}(\Omega) \cap L^{p-1}_{sp}(\mathbb{R}^{n})$ be a minimzer of $\mathcal{E}$, and let $B_{R} \Subset \Omega$ be a ball with $R \le 1$. When $sp \le n$, assume further that $u$ is bounded in $B_{R}$. Then for $w_{\pm}\coloneqq(u-k)_{\pm}$ with $|k| \le \|u\|_{L^{\infty}(B_{R})}$, we have
\begin{align}\label{eq:D}
\begin{split}
\lefteqn{ [w_{\pm}]^p_{W^{s,p}(B_{\rho})}+a^{-}_{B_{R}}[w_{\pm}]^q_{W^{1,q}(B_{\rho})}+\int_{B_{\rho}}w_{\pm}(x)\left[\int_{\ern}\dfrac{w_{\mp}^{p-1}(y)}{|x-y|^{n+sp}}\,dy\right]\,dx }\\
&\leq c\left(\frac{r}{r-\rho}\right)^{n+q}\left[ \dfrac{\|w_{\pm}\|^p_{L^p(B_r)}}{r^{sp}}+ a^{-}_{B_R}\dfrac{\|w_{\pm}\|^q_{L^q(B_r)}}{r^q} 
+ \|w_{\pm}\|_{L^1(B_r)}\tail(w_{\pm};r)\right]
\end{split}
\end{align}
for a constant $c=c(\data(B_{R}))$, whenever $B_{\rho} \subset B_{r} \subset B_{R}$ are concentric balls with $R/2 \le \rho \le r \le R$.
\end{lemma}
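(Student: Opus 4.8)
The plan is to upgrade the Caccioppoli estimate of Lemma~\ref{lem:cacc} into the ``improved'' form \eqref{eq:D} by exploiting two extra pieces of information that were not used there: the boundedness of $u$ on $B_R$ (when $sp\le n$), and the H\"older continuity \eqref{eq:hol.a} of $a(\cdot)$, which allows us to replace the $a(x)$ inside the local term by the infimum $a^-_{B_R}$ up to a controlled error. First I would start from the conclusion of Lemma~\ref{lem:cacc} applied with the radii $\rho<\sigma$ to be chosen, which already controls the left-hand side of \eqref{eq:D} \emph{except} that: (i) the first term on the right is $\|w_\pm(x)+w_\pm(y)\|_{L^p}$ weighted by $|x-y|^{(1-s)p-n}$ rather than a clean $r^{-sp}\|w_\pm\|_{L^p}^p$; (ii) the local term is $(\sigma-\rho)^{-q}\int_{B_\sigma}a(x)|w_\pm|^q$, which we want with $a^-_{B_R}$; and (iii) the tail term has the exponent $\sigma^{n+sp}/(\sigma-\rho)^{n+sp}$ which is already acceptable. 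For (i), since $(s-1)p<0$, the kernel $|x-y|^{(1-s)p-n}$ is integrable near the diagonal over $B_\sigma$, and integrating out one variable gives $\int_{B_\sigma}|x-y|^{(1-s)p-n}\,dy\le c\sigma^{(1-s)p}$; combined with $(a+b)^p\le c(a^p+b^p)$ this converts the first right-hand term of Lemma~\ref{lem:cacc} into $c\sigma^{(1-s)p}(\sigma-\rho)^{-p}\|w_\pm\|_{L^p(B_\sigma)}^p\le c\left(\tfrac{r}{r-\rho}\right)^{n+q}r^{-sp}\|w_\pm\|_{L^p(B_r)}^p$ after bounding $\sigma\le r\le R\le 1$ and absorbing powers of $r/(r-\rho)$.

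The genuinely new step is (ii): controlling $\int_{B_\sigma}a(x)|w_\pm|^q\,dx$ by $a^-_{B_R}\int_{B_r}|w_\pm|^q\,dx$ plus absorbable terms. Writing $a(x)=a^-_{B_R}+(a(x)-a^-_{B_R})$ and using \eqref{eq:hol.a} together with $\mathrm{diam}(B_R)\le 2R\le 2$, we have $a(x)-a^-_{B_R}\le c[a]_\alpha R^\alpha$ on $B_R$, so the error term is $c[a]_\alpha R^\alpha\int_{B_\sigma}|w_\pm|^q\,dx$. The point of the hypothesis $q\le sp+\alpha$ (resp.\ $q\le p+\tfrac{p\alpha}{n}+\tfrac{(s-1)pq}{n}$ when $sp>n$) is precisely that this error is dominated by the left-hand side: when $sp\le n$ we use $|w_\pm|\le 2\|u\|_{L^\infty(B_R)}$ to bound $|w_\pm|^q\le (2\|u\|_{L^\infty})^{q-sp}|w_\pm|^{sp}\le c|w_\pm|^{p}\cdot(\cdots)$ — more precisely, $R^\alpha\|w_\pm\|_{L^q}^q\le cR^{\alpha-(q-sp)\cdot 0}\cdots$; the cleanest route is to note $R^{\alpha}|w_\pm|^{q}\le R^{\alpha-q+sp}\|u\|_{L^\infty(B_R)}^{q-sp}\,R^{q-sp}|w_\pm|^{sp}$ and then use the Sobolev–Poincar\'e inequality \eqref{eq:fsp} / \eqref{eq:fsp2} to trade $r^{-sp}\|w_\pm\|^{p}$-type quantities (which appear on the right of \eqref{eq:D}) for the fractional seminorm, closing the circle. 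When $sp>n$, $w_\pm\in C^{0,s-n/p}$ by \eqref{eq:fsp2} with seminorm controlled by $[u]_{W^{s,p}(B_R)}$, and the exponent condition is exactly what makes the H\"older bound on $w_\pm$ convert $R^\alpha\|w_\pm\|_{L^q}^q$ into something of size $r^{-sp}\|w_\pm\|_{L^p}^p$ up to constants in $\data(B_R)$. In either case the error is \emph{not} absorbed into the left side but is bounded by the right side of \eqref{eq:D}, which is what we want.

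The last step is the passage from the pair of radii $(\rho_1,\sigma_1)$ in Lemma~\ref{lem:cacc} to the stated pair $(\rho,r)$ with the explicit factor $\left(\tfrac{r}{r-\rho}\right)^{n+q}$: this is a standard iteration/covering argument. One fixes $\rho\le r$, writes $r-\rho$ in dyadic pieces, applies the estimate on adjacent annuli and sums, or — more simply — applies Lemma~\ref{lem:cacc} once with $\sigma=r$ and notes that every occurrence of $(\sigma-\rho)^{-m}$ with $m\in\{p,q,n+sp\}$ is bounded by $(r-\rho)^{-(n+q)}$ times a power of $r$ (using $r\le 1$ and $m\le n+q$), and similarly $\sigma^{n+sp}\le r^{n+sp}\le 1$. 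I expect the main obstacle to be step (ii), specifically getting the exponent bookkeeping right so that the borderline cases $q=sp+\alpha$ and $sp=n$ are covered without losing a power of $R$ in the wrong direction; this is where the precise form of \eqref{eq:qspa2} is used and where one must be careful whether to absorb a term on the left or to keep it on the right. Everything else — the diagonal integrability of $|x-y|^{(1-s)p-n}$, the splitting of $a(\cdot)$, and the radii iteration — is routine.
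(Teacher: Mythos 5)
Your overall plan matches the paper's proof closely: start from Lemma~\ref{lem:cacc} with $\sigma=r$, integrate out the $(1-s)p$-kernel to turn the first right-hand term into $r^{-sp}\|w_\pm\|_{L^p(B_r)}^p$ (your step (i), correct), split $a(x)=a^-_{B_R}+(a(x)-a^-_{B_R})$ and bound the deviation by $[a]_\alpha r^\alpha$ (the right idea for step (ii)), leave the tail alone (step (iii), correct), and use $p\le q$ to collect the $(r/(r-\rho))$-powers into the single exponent $n+q$. No extra dyadic iteration over annuli is needed, as you also note.

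However, the execution of step (ii) — the one genuinely new ingredient — contains a real gap. To control $\frac{c}{(r-\rho)^q}\int_{B_r}r^\alpha|w_\pm|^q\,dx$ when $sp\le n$, you propose to write
$R^\alpha|w_\pm|^q\le\|u\|_{L^\infty}^{q-sp}R^\alpha|w_\pm|^{sp}$
and then invoke the Sobolev--Poincar\'e inequality \eqref{eq:fsp} ``to trade $r^{-sp}\|w_\pm\|^p$-type quantities for the fractional seminorm.'' This does not close. First, $sp<p$, so $|w_\pm|^{sp}$ is \emph{not} bounded above by $|w_\pm|^p$ (pointwise it can go the other way on the set where $|w_\pm|<1$). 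Second, \eqref{eq:fsp} bounds an $L^{p^*_s}$-oscillation by the seminorm, not $\|w_\pm\|_{L^{sp}}^{sp}$ by anything — and the seminorm is on the \emph{left} of \eqref{eq:D}, not the right, so there is no ``circle'' to close. The correct move (which the paper uses) is much more direct: since $p\le q$,
\begin{align*}
\frac{1}{(r-\rho)^q}\int_{B_r}r^\alpha|w_\pm|^q\,dx
\le \frac{\|w_\pm\|_{L^\infty(B_r)}^{q-p}}{(r-\rho)^q}\int_{B_r}r^\alpha|w_\pm|^p\,dx
\le c\left(\frac{r}{r-\rho}\right)^q\frac{\|w_\pm\|_{L^p(B_r)}^p}{r^{sp}},
\end{align*}
where the last step uses $r^\alpha\le r^{q-sp}$, i.e.\ $q\le sp+\alpha$ together with $r\le 1$, and $\|w_\pm\|_{L^\infty(B_r)}^{q-p}\le c(\|u\|_{L^\infty(B_R)})$ gets absorbed into $\data(B_R)$. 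Sobolev--Poincar\'e plays no role in this regime. For $sp>n$, your instinct is right that one passes through the Morrey embedding \eqref{eq:fsp2}: rewrite the assumption as $q\le sp+\alpha+(s-\tfrac{n}{p})(q-p)$, bound $(\osc_{B_r}u)^{q-p}\le c[u]_{W^{s,p}(B_r)}^{q-p}r^{(s-n/p)(q-p)}$, and then the same $|w_\pm|^q\le(\osc_{B_r}u)^{q-p}|w_\pm|^p$ argument produces the needed $r^{q-sp-\alpha}$. So the blueprint is right, but the arithmetic in the heart of step (ii) as written would not lead to \eqref{eq:D}.
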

\begin{proof}
Lemma~\ref{lem:cacc} directly implies
\begin{align}\label{eq:DG1}
\begin{split}
\lefteqn{ [w_{\pm}]^p_{W^{s,p}(B_{\rho})}+a^{-}_{B_R}[w_{\pm}]^q_{W^{1,q}(B_{\rho})}+\int_{B_{\rho}}w_{\pm}(x)\left[\int_{\ern}\dfrac{w_{\mp}^{p-1}(y)}{|x-y|^{n+sp}}\,dy\right]\,dx }\\
&\leq \dfrac{c}{(r-\rho)^{p}}\int_{B_{r}}\int_{B_{r}}\dfrac{|w_{\pm}(x)+w_{\pm}(y)|^p}{|x-y|^{(s-1)p}}\dfrac{dxdy}{|x-y|^n}+\dfrac{c}{(r-\rho)^q}\int_{B_{r}}a(x)|w_{\pm}|^q\,dx\\
&\quad+\dfrac{cr^{n+q}}{(r-\rho)^{n+q}}\|w_{\pm}\|_{L^1(B_r)}\tail(w_{\pm};r).
\end{split}
\end{align}
We estimate the first integral in the right-hand side by using the symmetry of $x$ and $y$: 
\begin{align*}
\begin{split}
\lefteqn{ \dfrac{c}{(r-\rho)^{p}}\int_{B_{r}}\int_{B_{r}}\dfrac{|w_{\pm}(x)+w_{\pm}(y)|^p}{|x-y|^{(s-1)p}}\dfrac{dxdy}{|x-y|^n} }\\
&\leq \dfrac{c}{(r-\rho)^p}\int_{B_r}\int_{B_r}\dfrac{|w_{\pm}(x)|^p}{|x-y|^{(s-1)p}}\dfrac{dxdy}{|x-y|^n}\\
&\leq \dfrac{c}{(r-\rho)^p}\int_{B_r}|w_{\pm}(x)|^p\int_{B_{2r}(x)}\dfrac{1}{|x-y|^{n+(s-1)p}}\,\,dydx\\
&\leq \dfrac{cr^{(1-s)p}}{(r-\rho)^p}\|w_{\pm}\|^p_{L^p(B_r)} = c\left(\frac{r}{r-\rho}\right)^{p}\frac{\|w_{\pm}\|_{L^{p}(B_{r})}^{p}}{r^{sp}}.
\end{split}
\end{align*}
For the second one, we use \eqref{eq:hol.a} and the fact that $R/2 \le r \le R$ to have
\begin{align}\label{eq:DG3} 
\frac{1}{(r-\rho)^{q}}\int_{B_{r}}a(x)|w_{\pm}|^{q}\,dx 
& \le \frac{c}{(r-\rho)^{q}}\int_{B_{r}}a^{-}_{B_{R}}|w_{\pm}|^{q}\,dx + \frac{c}{(r-\rho)^{q}}\int_{B_{r}}r^{\alpha}|w_{\pm}|^{q}\,dx.
\end{align}
Here, when $sp\le n$, we use $\eqref{eq:qspa2}_{1}$ in order to estimate
\begin{align}\label{eq:DG4}
\frac{c}{(r-\rho)^{q}}\int_{B_{r}}r^{\alpha}|w_{\pm}|^{q}\,dx 
 \le \frac{c\|w_{\pm}\|_{L^{\infty}(B_{r})}^{q-p}}{(r-\rho)^{q}}\int_{B_{r}}r^{\alpha}|w_{\pm}|^{p}\,dx  \le c\left(\frac{r}{r-\rho}\right)^{q}\frac{\|w_{\pm}\|_{L^{p}(B_{r})}^{p}}{r^{sp}}
\end{align}
with $c=c(\data(B_{r}))$. When $sp>n$, note that $\eqref{eq:qspa2}_{2}$ is equivalent to
\begin{align*}
q \le sp+\alpha + \left(s-\frac{n}{p}\right)(q-p),
\end{align*}
which together with \eqref{eq:fsp2} implies
\begin{align*}
\left(\osc_{B_r}u\right)^{q-p}=\left(\frac{\osc_{B_r}u}{r^{s-\frac{n}{p}}}\right)^{q-p}r^{\left(s-\frac{n}{p}\right)(q-p)}\leq c[u]_{W^{s,p}(B_r)}^{q-p}r^{q-sp-\alpha}. 
\end{align*}
In turn, we obtain
\begin{align}\label{eq:DG*2}
\frac{c}{(r-\rho)^{q}}\int_{B_{r}}r^{\alpha}|w_{\pm}|^{q}\,dx \le \frac{c(\osc_{B_{r}}u)^{q-p}}{(r-\rho)^{q}}\int_{B_{r}}r^{\alpha}|w_{\pm}|^{p}\,dx  \le c\left(\frac{r}{r-\rho}\right)^{q}\frac{\|w_{\pm}\|_{L^{p}(B_{r})}^{p}}{r^{sp}}
\end{align}
for a constant $c=c(\data(B_{r}))$. Combining \eqref{eq:DG1}--\eqref{eq:DG3} with each of \eqref{eq:DG4} and \eqref{eq:DG*2}, and then recalling the fact $p\leq q$, in any case we conclude with \eqref{eq:D}.
\end{proof}

In the following, with $B_{r} \Subset \Omega$ being any ball, we denote
\begin{equation}\label{eq:Gg}
 G_{B_{r}}(t) \coloneqq \frac{t^{p}}{r^{sp}} + a^{-}_{B_{r}}\frac{t^{q}}{r^{q}} \quad \text{and} \quad g_{B_{r}}(t) \coloneqq \frac{t^{p-1}}{r^{sp}} + a^{-}_{B_{r}}\frac{t^{q-1}}{r^{q}} \qquad \text{for } t \ge 0.
\end{equation}
Now we prove the following key lemma. 

\begin{lemma}\label{lem:key}
Let $u \in \mathcal{A}(\Omega) \cap L^{p-1}_{sp}(\mathbb{R}^{n})$ be a minimizer of $\mathcal{E}$ which is nonnegative in a ball $B_{4R}\Subset \Omega$ with $R\leq 1$. When $sp \le n$, assume further that $u$ is bounded in $B_{4R}$. Suppose that
\begin{align}\label{eq:ke1}
|B_{2R}\cap\{u\geq t\}|\geq\nu|B_{2R}|
\end{align}
for some $\nu\in(0,1)$ and $t>0$. Then for any $\delta\in(0,\frac{1}{8}]$, if
\begin{align}\label{eq:ke2}
\tail(u_-;4R)\leq g_{B_{4R}}(\delta t),
\end{align}
then
\begin{align}\label{eq:ke3}
|B_{2R}\cap\{u< 2\delta t\}|\leq\dfrac{c_1}{\nu^{\max\{\frac{2q}{q-1},\frac{2n}{n-1}\}}}\left(\delta^{\frac{p-1}{2}}+\frac{1}{|\log\delta|^{\frac{n}{n-1}\frac{q-1}{q}}}\right)|B_{2R}|,
\end{align}
where $c_1=c_1(\data(B_{4R}))$.
\end{lemma}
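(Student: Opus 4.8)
The plan is to follow the classical measure-theoretic expansion-of-positivity scheme (as in De Giorgi--Ladyzhenskaya--Ural'tseva, adapted to the fractional/double-phase setting along the lines of \cite{C1}), combining the energy estimate from Lemma~\ref{lem:DG} with the isoperimetric-type Lemma~\ref{lem:iso} applied on the local part, and a logarithmic-type control on the nonlocal part. Throughout, I would write $w \coloneqq (u - 2\delta t)_-$ and $v \coloneqq (u - \delta t)_-$, so that $w$ and $v$ vanish on the large set $\{u \ge t\}$ (using $\delta \le 1/8$), and $0 \le w \le v \le \delta t$ on $\{u < 2\delta t\}$; the target estimate \eqref{eq:ke3} is just a lower bound on the measure where $w$ is not too small, turned around. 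The hypothesis \eqref{eq:ke1} gives that $v$ (and $w$) vanish on a set of measure at least $\nu |B_{2R}|$, which is what feeds the Poincar\'e/isoperimetric inequalities.

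First I would apply Lemma~\ref{lem:DG} to $w_- = (u-k)_-$ with $k = 2\delta t$ on the pair of balls $B_{2R} \subset B_{3R}$ (both inside $B_{4R}$), to obtain
\[
[w]_{W^{s,p}(B_{2R})}^p + a^-_{B_{4R}}[w]_{W^{1,q}(B_{2R})}^q \le c\left[\frac{\|w\|_{L^p(B_{3R})}^p}{R^{sp}} + a^-_{B_{4R}}\frac{\|w\|_{L^q(B_{3R})}^q}{R^q} + \|w\|_{L^1(B_{3R})}\tail(w;3R)\right].
\]
Since $0 \le w \le 2\delta t$, the first two terms are bounded by $c\,|B_{2R}|\,G_{B_{4R}}(2\delta t)$, and the nonlocal tail term is controlled using $\tail(w;3R) \le c(\tail(u_-;4R) + (\delta t)^{p-1}R^{-sp})$ together with the smallness assumption \eqref{eq:ke2}, so the whole right-hand side is $\le c\,|B_{2R}|\,G_{B_{4R}}(\delta t)$ — this is the step where \eqref{eq:ke2} is used, to prevent the tail of $u_-$ from spoiling the energy bound. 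The constant here is of the form $c(\data(B_{4R}))$.

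Next comes the core alternative (``nonlocal phase'' vs ``mixed phase'') already advertised in the introduction. In the \emph{mixed phase}, where $a^-_{B_{4R}}$ is not too degenerate relative to $(\delta t)^{q-p}$, I would run the De~Giorgi argument on the $W^{1,q}$ part: apply Lemma~\ref{lem:iso} with $u$ replaced by a truncation between levels $\delta t$ and $2\delta t$ to get
\[
\delta t\,|B_{2R} \cap \{u < \delta t\}|^{1-\frac1n} \le \frac{c|B_{2R}|}{|B_{2R}\cap\{u \ge 2\delta t\}|}\int_{B_{2R}\cap\{\delta t \le u < 2\delta t\}}|Du|\,dx,
\]
bound the right-hand integral by H\"older and the $W^{1,q}$-energy estimate, and use \eqref{eq:ke1} (in the form $|B_{2R}\cap\{u\ge 2\delta t\}| \ge$ const $\cdot\nu|B_{2R}|$, since on $\{u\ge t\}\supset$ that set and $2\delta t \le t/4$) to turn this into $|B_{2R}\cap\{u<\delta t\}| \le c\,\nu^{-\frac{q}{q-1}}|B_{2R}|$ times a small factor involving $\delta$. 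In the \emph{nonlocal phase}, where $a^-_{B_{4R}}$ is too small for the above to give a gain, I instead exploit the purely nonlocal energy $[w]_{W^{s,p}(B_{2R})}^p$ via a logarithmic estimate: the term $\int_{B_{2R}} w(x)\big[\int_{\ern} w_{\mp}^{p-1}(y)|x-y|^{-n-sp}dy\big]dx$ appearing on the left of Lemma~\ref{lem:DG} together with \eqref{eq:ke1} produces, after the standard manipulation (cf.\ the logarithmic lemma of \cite{DKP14}), a bound of the form $|B_{2R}\cap\{u<2\delta t\}| \le c\,\nu^{-\frac{n}{n-1}\frac{q-1}{q}\cdot 2}\,|\log\delta|^{-\frac{n}{n-1}\frac{q-1}{q}}|B_{2R}|$. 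Balancing the two exponents of $\nu$ and taking the worse of the two small factors $\delta^{(p-1)/2}$ and $|\log\delta|^{-\frac{n}{n-1}\frac{q-1}{q}}$ gives exactly the right-hand side of \eqref{eq:ke3}.

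I expect the main obstacle to be the bookkeeping in the alternative: making the split between the ``mixed'' and ``nonlocal'' phases \emph{quantitatively clean} so that in each phase one genuinely extracts a gaining power of $\delta$ (the $\delta^{(p-1)/2}$ from the nonlocal Caccioppoli plus Poincar\'e–Sobolev iteration, and a compatible power from the $W^{1,q}$ isoperimetric argument), while keeping every constant dependent only on $\data(B_{4R})$ and every $\nu$-power within $\max\{\frac{2q}{q-1},\frac{2n}{n-1}\}$. A secondary technical point is correctly absorbing the $q$-growth term $a^-_{B_{4R}}\|w\|_{L^q}^q/R^q$ using boundedness of $u$ (when $sp\le n$) or the $W^{s,p}$-seminorm control (when $sp>n$) exactly as in Lemma~\ref{lem:DG}, so that $G_{B_{4R}}(\delta t)$ and $g_{B_{4R}}(\delta t)$ are the natural quantities throughout and the two regimes $sp\le n$, $sp>n$ are handled uniformly.
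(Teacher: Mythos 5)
Your overall strategy (the Caccioppoli bound from Lemma~\ref{lem:DG}, a phase alternative comparing $(\,\cdot\,/R^{s})^{p}$ with $a^{-}_{B_{4R}}(\,\cdot\,/R)^{q}$, De Giorgi's isoperimetric Lemma~\ref{lem:iso} for the local part, and the positive nonlocal term for the other) is indeed the paper's strategy, but two steps as you describe them do not close. First, in the ``mixed phase'' you apply Lemma~\ref{lem:iso} once, between the two adjacent levels $\delta t$ and $2\delta t$. A single application yields no smallness in $\delta$: inserting the energy bound $a^{-}_{B_{4R}}\int|D(u-2\delta t)_-|^{q}\,dx\le c\,a^{-}_{B_{4R}}(\delta t/R)^{q}|B_{R}|$ into H\"older's inequality and then into the isoperimetric inequality, the powers of $\delta t$ cancel exactly and you are left with $|B_{2R}\cap\{u<\delta t\}|\le c\,\nu^{-n/(n-1)}|B_{2R}|$, i.e.\ nothing. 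The logarithmic gain $|\log\delta|^{-\frac{n}{n-1}\frac{q-1}{q}}$ in \eqref{eq:ke3} comes from running this estimate at \emph{all} dyadic levels $2^{-j}t$, $j=1,\dots,i$, with $2^{-i-1}\le 2\delta<2^{-i}$, and summing: each level gives $\nu^{\frac{q}{q-1}}|B_{2R}\cap\{u\le 2^{-i}t\}|^{\frac{n-1}{n}\frac{q}{q-1}}\le cR^{\frac{n-q}{q-1}}|A_{j}|$ with the annular sets $A_{j}$ pairwise disjoint, so summation over $j$ produces the factor $1/i\simeq 1/|\log\delta|$. Relatedly, you attribute the logarithmic decay to the nonlocal term via a log-lemma \`a la Di Castro--Kuusi--Palatucci; the paper explicitly cannot prove such a logarithmic estimate in this mixed setting, and the nonlocal good term instead gives the power decay $\delta^{p-1}$ directly from \eqref{eq:ke1}.

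Second, your alternative is a single dichotomy at the scale $\delta t$, but the dyadic iteration above needs the $q$-phase to dominate at \emph{every} level $2^{-j}t$ up to $t$ (so that $G_{B_{4R}}(2^{-j+1}t)$ coming from Lemma~\ref{lem:DG} can be absorbed into $a^{-}_{B_{4R}}(2^{-j+1}t/R)^{q}$), while the nonlocal estimate needs the $p$-phase to dominate at the level where it is applied. In the intermediate regime where the $q$-phase dominates at $\delta t$ but the $p$-phase dominates at $t$, there is a crossover level $2^{-\beta}t$: the paper first uses the nonlocal term at level $2^{-\beta+2}t$ to get $|B_{2R}\cap\{u\le 2^{-\beta+1}t\}|\le c\,2^{-\beta(p-1)}\nu^{-1}|B_{2R}|$, then runs the isoperimetric iteration only for $j=\beta,\dots,i$, and finally splits according to whether $2^{-\beta}\le\delta^{1/2}$ or not. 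This crossover case is precisely where the exponent $\delta^{(p-1)/2}$ (rather than $\delta^{p-1}$) in \eqref{eq:ke3} originates; omitting it leaves a genuine hole in the argument.
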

\begin{proof}
We may assume that all the balls are centered at the origin.
We first observe that, for any $k\geq 0$ and $\zeta\geq 1$,
\begin{align}\label{eq:ke4}
\|(u-k)_-\|^{\zeta}_{L^{\zeta}(B_{4R})}=\int_{A^-(k,4R)}(k-u(x))^{\zeta}\,dx\leq |A^-(k,4R)|k^{\zeta}\leq |B_{4R}|k^{\zeta}.
\end{align}
Fix any $l\geq \frac{\delta t}{2}$. We apply \eqref{eq:ke2} to obtain
\begin{align}\label{eq:ke5}
\begin{split}
\tail((u-l)_-;4R)&=\int_{\ern\setminus B_{4R}}\dfrac{(l-u(x))^{p-1}_+}{|x|^{n+sp}}\,dx\\
&\leq c\left[l^{p-1}\int_{\ern\setminus B_{4R}}\dfrac{dx}{|x|^{n+sp}}+\int_{\ern\setminus B_{4R}}\dfrac{u_-(x)^{p-1}}{|x|^{n+sp}}\,dx\right]\\
&=c\left[R^{-sp}l^{p-1}+\tail(u_-;4R)\right]\\
&\leq cg_{B_{4R}}(l).
\end{split}
\end{align}
Then by Lemma \ref{lem:DG}, \eqref{eq:ke4} and \eqref{eq:ke5}, it follows that 
\begin{align}\label{eq:ke6}
\begin{split}
\lefteqn{ \int_{B_{2R}}\int_{B_{2R}}\dfrac{(u(x)-l)^{p-1}_{+}(u(y)-l)_{-}}{|x-y|^{n+sp}}\,dxdy+a^{-}_{B_{4R}}\int_{B_{2R}}|D(u-l)_-|^q\,dx }\\
& \leq c\left[\frac{\|(u-l)_{-}\|_{L^{p}(B_{4R})}^{p}}{R^{sp}} + a^{-}_{B_{4R}}\frac{\|(u-l)_{-}\|_{L^{q}(B_{4R})}^{q}}{R^{q}} + \|(u-l)_{-}\|_{L^{1}(B_{4R})}\tail((u-l)_{-};4R)\right] \\
& \leq cG_{B_{4R}}(l)|B_{R}|
\end{split}
\end{align}
for any $l\geq\frac{\delta t}{2}$ with $c=c(\data(B_{4R}))$. Here, if $a^{-}_{B_{4R}}=0$, then we have
\begin{align*}
\int_{B_{2R}}\int_{B_{2R}}\dfrac{(u(x)-l)^{p-1}_+(u(y)-l)_-}{|x-y|^{n+sp}}\,dxdy\leq c_1\left(\dfrac{l}{R^{s}}\right)^p|B_R|;
\end{align*}
in this case, \eqref{eq:ke3} follows in the same way as in \cite[Lemmas 6.3 and 6.5]{C1}. 

We now consider the case $a^{-}_{B_{4R}}>0$. Note that we have $u\in W^{1,q}(B_{4R})$ in this case.
We consider the following two cases:
\begin{align}\label{eq:alt}
\left(\dfrac{\delta t}{R^s}\right)^p\geq a^{-}_{B_{4R}}\left(\dfrac{\delta t}{R}\right)^q\quad\text{and}\quad\left(\dfrac{\delta t}{R^s}\right)^p<a^{-}_{B_{4R}}\left(\dfrac{\delta t}{R}\right)^q.
\end{align}

\textit{Step 1: The case $\eqref{eq:alt}_{1}$.} 
In this case, by \eqref{eq:ke1} and \eqref{eq:ke6} with $l=4\delta t$, we have
\begin{equation*}
\begin{aligned}
\left(\dfrac{4\delta t}{R^s}\right)^p|B_{2R}|\overset{\eqref{eq:ke6}}&{\geq} \frac{1}{c}\int_{B_{2R}}\int_{B_{2R}}\dfrac{(u(x)-4\delta t)^{p-1}_{+}(u(y)-4\delta t)_{-}}{|x-y|^{n+sp}}\,dxdy\\
&\geq \dfrac{1}{cR^{n+sp}}\int_{B_{2R}\cap\{u \geq t\}}(u(x)-4\delta t)^{p-1}\,dx\int_{B_{2R}\cap\{u < 2\delta t\}}(4\delta t-u(y))\,dy\\
&\geq \dfrac{\delta t^p}{cR^{n+sp}}|B_{2R}\cap\{u\geq t\}||B_{2R}\cap\{u< 2\delta t\}|\\
\overset{\eqref{eq:ke1}}&{\geq} \dfrac{\delta\nu t^p}{cR^{sp}}|B_{2R}\cap\{u< 2\delta t\}|.
\end{aligned}
\end{equation*}
Then it follows that
\begin{align}\label{eq:ke7}
|B_{2R}\cap\{u< 2\delta t\}|\leq \dfrac{c\delta ^{p-1}}{\nu}|B_{2R}|
\end{align}
with $c=c(\data(B_{4R}))$.

We start to deal with the case $\eqref{eq:alt}_{2}$. We choose $i\in\mathbb{N}$ satisfying
\begin{align}\label{eq:ke7.1}
2^{-i-1}\leq 2\delta<2^{-i}
\end{align}
and further distinguish two subcases:
\begin{align}\label{eq:alt2}
\left(\dfrac{t}{R^s}\right)^p<a^{-}_{B_{4R}}\left(\dfrac{t}{R}\right)^q\quad\text{and}\quad\left(\dfrac{t}{R^s}\right)^p\geq a^{-}_{B_{4R}}\left(\dfrac{t}{R}\right)^q.
\end{align}

\textit{Step 2: The case $\eqref{eq:alt}_{2}$ and $\eqref{eq:alt2}_{1}$.}
Note from $\eqref{eq:alt}_{2}$ that
\begin{align*}
\left(\dfrac{2^{-i}t}{R^s}\right)^p=2^p\left(\dfrac{2^{-i-1}t}{R^s}\right)^p\leq 2^p\left(\dfrac{2\delta t}{R^s}\right)^p\leq 2^pa^{-}_{B_{4R}}\left(\dfrac{2\delta t}{R}\right)^q\leq 2^pa^{-}_{B_{4R}}\left(\dfrac{2^{-i}t}{R}\right)^q.
\end{align*}
Then together with $\eqref{eq:alt2}_{1}$ we have
\begin{align}\label{eq:ke8}
\left(\dfrac{kt}{R^s}\right)^{p}\leq 2^pa^{-}_{B_{4R}}\left(\dfrac{kt}{R}\right)^q\quad\text{for all }k\in[2^{-i},1].
\end{align}
Denoting $A_j \coloneqq B_{2R}\cap\{2^{-j}t<u\leq 2^{-j+1}t\}$, it follows from Lemma \ref{lem:iso} that for all $j\in\{1,\dots,i\}$,
\begin{align}\label{eq:ke8.1}
\begin{split}
2^{-j}t|B_{2R}\cap\{u\leq 2^{-j}t\}|^{1-\frac{1}{n}}&\leq \dfrac{c|B_{2R}|}{|B_{2R}\cap\{u\geq 2^{-j+1}t\}|}\int_{A_j}|Du|\,dx\\
&\leq\dfrac{c}{\nu}\int_{A_j}|Du|\,dx,
\end{split}
\end{align}
where for the last inequality we have used
\begin{align*}
|B_{2R}\cap\{u\geq 2^{-j+1}t\}|\geq|B_{2R}\cap\{u\geq t\}|\geq \nu|B_{2R}|.
\end{align*}
Then it follows that
\begin{align*}
\nu(2^{-j}t)|B_{2R}\cap\{u\leq 2^{-j}t\}|^{\frac{n-1}{n}}\leq c\int_{A_j}|Du|\,dx.
\end{align*}
Moreover, by H\"{o}lder's inequality, we find that
\begin{align*}
\int_{A_j}|Du|\,dx\leq|A_j|^{\frac{1}{q'}}\left(\int_{B_{2R}}|D(u-2^{-j+1}t)_-|^q\,dx\right)^{\frac{1}{q}}.
\end{align*}
Combining the last two displays, we have
\begin{equation*}
\begin{aligned}
a^{-}_{B_{4R}}\nu^q(2^{-j}t)^q|B_{2R}\cap\{u\leq 2^{-j}t\}|^{\frac{n-1}{n}q}&\leq c|A_{j}|^{q-1} a^{-}_{B_{4R}}\int_{B_{2R}}|D(u-2^{-j+1}t)_-|^q\,dx\\
\overset{\eqref{eq:ke6}}&{\leq} c|A_j|^{q-1}G_{B_{4R}}(2^{-j+1}t)|B_{2R}|\\
\overset{\eqref{eq:ke8}}&{\leq} c|A_j|^{q-1}a^{-}_{B_{4R}}\left(\dfrac{2^{-j+1}t}{R}\right)^q|B_{2R}|,
\end{aligned}
\end{equation*}
and so
\begin{align*}
\nu^{\frac{q}{q-1}}|B_{2R}\cap\{u\leq 2^{-j}t\}|^{\frac{n-1}{n}\frac{q}{q-1}}\leq c\left(\frac{|B_{2R}|}{R^q}\right)^{\frac{1}{q-1}}|A_j|\leq c R^{\frac{n-q}{q-1}}|A_j|.
\end{align*}
Then since $(B_{2R}\cap\{u\leq 2^{-i}t\})\subset (B_{2R}\cap\{u\leq 2^{-j}t\})$ for all $j\in\{1,\dots,i\}$, there holds
\begin{align}\label{eq:ke8.2}
\nu^{\frac{q}{q-1}}|B_{2R}\cap\{u\leq 2^{-i}t\}|^{\frac{n-1}{n}\frac{q}{q-1}}\leq c R^{\frac{n-q}{q-1}}|A_j|.
\end{align}
We recall the definition of $A_j$ and sum up \eqref{eq:ke8.2} over $j\in\{1,\dots,i\}$, to discover
\begin{equation*}
\begin{aligned}
i\nu^{\frac{q}{q-1}}|B_{2R}\cap\{u\leq 2^{-i}t\}|^{\frac{n-1}{n}\frac{q}{q-1}}&\leq c R^{\frac{n-q}{q-1}}\sum^{i}_{j=1}|A_j|\\
&\leq c R^{\frac{n-q}{q-1}}|B_{2R}\cap\{2^{-i}t< u\leq 2t\}|\\
&\leq c R^{\frac{n-q}{q-1}}|B_{2R}|\eqsim|B_{2R}|^{\frac{n-1}{n}\frac{q}{q-1}},
\end{aligned}
\end{equation*}
and so
\begin{align*}
|B_{2R}\cap\{u\leq 2^{-i}t\}|\leq \dfrac{c|B_{2R}|}{\nu^{\frac{n}{n-1}}i^{\frac{n}{n-1}\frac{q-1}{q}}}.
\end{align*}
Recalling \eqref{eq:ke7.1}, one can easily conclude that
\begin{align}\label{eq:ke9}
|B_{2R}\cap\{u\leq 2\delta t\}|\leq \dfrac{c|B_{2R}|}{\nu^{\frac{n}{n-1}}|\log\delta|^{\frac{n}{n-1}\frac{q-1}{q}}}
\end{align}
with $c=c(\data(B_{4R}))$.

\textit{Step 3: The case $\eqref{eq:alt}_{2}$ and $\eqref{eq:alt2}_{2}$.}
In this case, let $\beta\in\{1,\dots,i\}$ be such that
\begin{align*}
\left(\dfrac{2^{-\beta}t}{R^s}\right)^p<a^{-}_{B_{4R}}\left(\dfrac{2^{-\beta}t}{R}\right)^q\quad\text{but}\quad\left(\dfrac{2^{-\beta+1}t}{R^s}\right)^p\geq a^{-}_{B_{4R}}\left(\dfrac{2^{-\beta+1}t}{R}\right)^q.
\end{align*}
By using \eqref{eq:ke6} for $l=2^{-\beta+2}t$, we have
\begin{equation*}
\begin{aligned}
\lefteqn{ \left(\dfrac{2^{-\beta+2}t}{R^s}\right)^p|B_{2R}|\overset{\eqref{eq:ke6}}{\geq }\frac{1}{c}\int_{B_{2R}}\int_{B_{2R}}\dfrac{(u(x)-2^{-\beta+2}t)_+^{p-1}(u(x)-2^{-\beta+2}t)_-}{|x-y|^{n+sp}}\,dxdy }\\
&\geq  \dfrac{1}{cR^{n+sp}}\int_{B_{2R}\cap\{u\geq t\}}(u(x)-2^{-\beta+2}t)^{p-1}\,dx\int_{B_{2R}\cap\{u\leq 2^{-\beta+1}t\}}(2^{-\beta+2}t-u(y))\,dy\\
&\geq\dfrac{}{cR^{n+sp}}2^{-\beta+1}t^p|B_{2R}\cap\{u\geq t\}||B_{2R}\cap\{u\leq 2^{-\beta+1}t\}|\\
\overset{\eqref{eq:ke1}}&{\geq}\dfrac{1}{cR^{sp}}2^{-\beta+1}\nu t^p|B_{2R}\cap\{u\leq 2^{-\beta+1}t\}|.
\end{aligned}
\end{equation*}
Consequently,
\begin{align*}
|B_{2R}\cap\{u\leq 2^{-\beta+1}t\}|\leq\dfrac{c(2^{-\beta+1})^{p-1}}{\nu}|B_{2R}|.
\end{align*}
On the other hand, by the same computations as in \eqref{eq:ke8.1}, we discover that for all $j\in\{\beta,\dots,i\}$,
\begin{align*}
2^{-\beta}t|B_{2R}\cap\{u\leq 2^{-\beta}t\}|^{1-\frac{1}{n}}\leq\frac{c}{\nu}\int_{A_j}|Du|\,dx.
\end{align*}
Following the same arguments used in \eqref{eq:ke8.1}--\eqref{eq:ke8.2}, we have the inequality
\begin{align*}
\nu^{\frac{q}{q-1}}|B_{2R}\cap\{u\leq 2^{-i}t\}|^{\frac{n-1}{n}\frac{q}{q-1}}\leq c R^{\frac{n-q}{q-1}}|A_j|.
\end{align*}
Sum $j=\beta, \beta+1,\dots,i$, to find
\begin{equation*}
\begin{aligned}
\nu^{\frac{q}{q-1}}(i-\beta+1)|B_{2R}\cap\{u\leq 2^{-i}t\}|^{\frac{n-1}{n}\frac{q}{q-1}}&\leq c R^{\frac{n-q}{q-1}}\sum^i_{j=\beta}|A_j|\\
&\leq c R^{\frac{n-q}{q-1}}|B_{2R}\cap\{2^{-i}t< u\leq 2^{-\beta+1}t\}|\\
&\leq c R^{\frac{n-q}{q-1}}\dfrac{(2^{-\beta+1})^{p-1}}{\nu}|B_{2R}|\eqsim \dfrac{2^{-\beta(p-1)}}{\nu}|B_{2R}|^{\frac{n-1}{n}\frac{q}{q-1}}.
\end{aligned}
\end{equation*}
Thus
\begin{align*}
|B_{2R}\cap\{u\leq 2^{-i}t\}|\leq c|B_{2R}|\left(\dfrac{2^{-\beta(p-1)}}{\nu^{1+\frac{q}{q-1}}(i-\beta+1)}\right)^{\frac{n}{n-1}\frac{q-1}{q}},
\end{align*}
which together with \eqref{eq:ke7.1} implies
\begin{align*}
|B_{2R}\cap\{u\leq 2\delta t\}|\leq c |B_{2R}|\left(\dfrac{2^{-\beta(p-1)}}{\nu^{1+\frac{q}{q-1}}\left(\frac{-\log\delta}{\log 2}-\beta+1\right)}\right)^{\frac{n}{n-1}\frac{q-1}{q}}.
\end{align*}

We first consider the case that $2^{-\beta}\leq\delta^{\frac{1}{2}}$. Recall \eqref{eq:ke7.1} and the fact that $\beta\leq i-1$, to observe
\begin{align*}
\frac{-\log\delta}{\log 2}\geq\beta+2.
\end{align*}
Then we have
\begin{align}\label{eq:ke10}
|B_{2R}\cap\{u\leq 2\delta t\}|\leq c|B_{2R}|\left(\dfrac{\delta^{\frac{p-1}{2}}}{\nu^{1+\frac{q}{q-1}}}\right)^{\frac{n}{n-1}\frac{q-1}{q}}\leq c|B_{2R}|\left(\dfrac{\delta^{\frac{p-1}{2}}}{\nu^{\frac{2q}{q-1}}}\right)^{\frac{n}{n-1}\frac{q-1}{q}}.
\end{align}
We next consider the case $2^{-\beta}>\delta^{\frac{1}{2}}$. Then $\frac{-\log\delta}{2\log 2}>\beta$, and so
\begin{align}\label{eq:ke11}
\begin{split}
|B_{2R}\cap\{u\leq 2\delta t\}|&\leq c|B_{2R}|\left(\dfrac{1}{\nu^{\frac{2q}{q-1}}\left(-\frac{\log\delta}{2\log 2}+1\right)}\right)^{\frac{n}{n-1}\frac{q-1}{q}}\\
&\leq c|B_{2R}|\left(\dfrac{1}{\nu^{\frac{2q}{q-1}}|\log\delta|}\right)^{\frac{n}{n-1}\frac{q-1}{q}}.
\end{split}
\end{align}

Combining \eqref{eq:ke7}, \eqref{eq:ke9}, \eqref{eq:ke10} and \eqref{eq:ke11}, we conclude with
\begin{align*}
|B_{2R}\cap\{u\leq 2\delta t\}|\leq\dfrac{c}{\nu^{\max\left\{\frac{2q}{q-1},\frac{2n}{n-1}\right\}}}\left(\delta^{\frac{p-1}{2}}+\frac{1}{|\log\delta|^{\frac{n}{n-1}\frac{q-1}{q}}}\right)|B_{2R}|,
\end{align*}
which completes the proof.
\end{proof}

Using Lemma \ref{lem:key}, we now show the expansion of positivity.

\begin{lemma}\label{lem:grow}
Let $u \in \mathcal{A}(\Omega)\cap L^{p-1}_{sp}(\mathbb{R}^{n})$ be a minimizer of $\mathcal{E}$ which is nonnegative in a ball $B_{4R}\Subset \Omega$ with $R\leq 1$. When $sp \le n$, assume further that $u$ is bounded in $B_{4R}$. Suppose that
\begin{align*}
|B_{2R}\cap\{u\geq t\}|\geq\nu|B_{2R}|
\end{align*}
for some $\nu\in(0,1)$ and $t>0$. Then there exists $\delta=\delta(\data(B_{4R}),\nu)\in(0,\frac{1}{8}]$ such that if
\begin{align}\label{eq:as1}
\tail(u_-;4R)\leq g_{B_{4R}}(\delta t),
\end{align}
then we have $u\geq \delta t$ in $B_R$.
\end{lemma}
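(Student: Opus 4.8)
The goal is a standard De Giorgi–type iteration on dyadically shrinking levels, but now fed by the measure–shrinking estimate of Lemma~\ref{lem:key} instead of a pointwise energy inequality. I would fix $\delta \in (0,\frac18]$ to be chosen at the end and, assuming \eqref{eq:as1}, set up the sequence of levels $k_j \coloneqq \delta t + 2^{-j}\delta t$ (so $k_j \downarrow \delta t$ and $k_0 = 2\delta t$), the shrinking radii $\rho_j \coloneqq R + 2^{-j}R$ (so $\rho_j \downarrow R$, $\rho_0 = 2R$), and intermediate radii $\bar\rho_j = (\rho_j+\rho_{j+1})/2$. The natural energy quantity to iterate is $Y_j \coloneqq |B_{\rho_j}\cap\{u < k_j\}|/|B_{\rho_j}|$, or a weighted version $|A^-(k_j,\rho_j)|$ measured against $G_{B_{4R}}$; I will comment on which below.

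\textbf{Key steps.} First I would apply Lemma~\ref{lem:DG} (the improved Caccioppoli estimate) to $w_- = (u-k_j)_-$ on the pair $B_{\bar\rho_j}\subset B_{\rho_j}$, which is legitimate because $0 \le k_j \le 2\delta t \le t \le \|u\|_{L^\infty(B_{4R})}$ (when $sp\le n$) and $u$ is nonnegative on $B_{4R}$ so the tail term is controlled via \eqref{eq:ke5}-type manipulations using \eqref{eq:as1}. This gives control of $[w_-]^p_{W^{s,p}(B_{\bar\rho_j})} + a^-_{B_{4R}}[w_-]^q_{W^{1,q}(B_{\bar\rho_j})}$ by $c\,2^{jn}\,G_{B_{4R}}(k_j - k_{j+1})\cdot$(something), using that $k_j - k_{j+1} = 2^{-j-1}\delta t$ bounds $w_-$ on the smaller set and that $(\rho_j/(\rho_j-\bar\rho_j))^{n+q}\simeq 2^{j(n+q)}$. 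Next, on $B_{\bar\rho_j}$, I would run the Sobolev–Poincaré inequality \eqref{eq:fsp} (for the $W^{s,p}$ part) and the ordinary Sobolev inequality together with Lemma~\ref{lem:iso} (for the $W^{1,q}$ part, exactly mirroring the computation in Steps 2–3 of Lemma~\ref{lem:key}) to convert the gradient/Gagliardo energy into a gain of integrability, and then use the standard trick $|A^-(k_{j+1},\bar\rho_j)|\,(k_j-k_{j+1})^{\kappa} \le \int_{A^-(k_j,\bar\rho_j)} w_-^{\kappa}$ to close the recursion. Because the functional is double phase, one has to handle the two phases separately — whichever of $\big(\tfrac{\delta t}{R^s}\big)^p$ and $a^-_{B_{4R}}\big(\tfrac{\delta t}{R}\big)^q$ dominates $G_{B_{4R}}(\delta t)$ governs which Sobolev exponent one uses; this is why Lemma~\ref{lem:key} already pre-digested the alternatives. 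The upshot is a recursion of the shape $Y_{j+1} \le c\,\nu^{-\theta}\,b^{j}\,Y_j^{1+\beta}$ with $b>1$ and $\beta>0$ depending only on $\data(B_{4R})$. Finally, Lemma~\ref{lem:key} with the same $\nu,t$ and this $\delta$ says precisely that $Y_0 = |B_{2R}\cap\{u<2\delta t\}|/|B_{2R}| \le c_1\nu^{-\max\{\cdots\}}\big(\delta^{(p-1)/2} + |\log\delta|^{-\frac{n}{n-1}\frac{q-1}{q}}\big)$; choosing $\delta = \delta(\data(B_{4R}),\nu)$ small enough makes this smaller than the smallness threshold $b^{-1/\beta^2}(c\nu^{-\theta})^{-1/\beta}$ required by Lemma~\ref{lem:it}, whence $Y_j \to 0$, i.e.\ $|B_R\cap\{u<\delta t\}| = 0$, which is the claim $u \ge \delta t$ a.e.\ in $B_R$.

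\textbf{Main obstacle.} The delicate point is the De Giorgi iteration step in the presence of the $a(\cdot)$-weight: one cannot simply invoke a single Sobolev inequality, because on the part of $B_{4R}$ where the $q$-phase is active the correct gain-of-integrability comes from the $W^{1,q}$-energy (via Lemma~\ref{lem:iso}, as in Steps 2–3 of Lemma~\ref{lem:key}), while on the purely nonlocal part it comes from \eqref{eq:fsp}; making these two contributions cooperate so as to produce a single clean recursion for $Y_j$ with a superlinear exponent $1+\beta$ and a geometric constant $b^j$ — uniformly over the two alternatives in \eqref{eq:alt} and \eqref{eq:alt2} — is the technical heart. A secondary nuisance is bookkeeping the tail: one must verify at each level that $\tail((u-k_j)_-;4R) \le c\,g_{B_{4R}}(k_j)$, which follows from \eqref{eq:as1} and $k_j \le 2\delta t$ together with the elementary splitting used to derive \eqref{eq:ke5}, so that Lemma~\ref{lem:DG}/Lemma~\ref{lem:cacc} are applicable with the right-hand side absorbed into $G_{B_{4R}}(k_j)|B_{4R}|$. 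Once these are in hand, the choice of $\delta$ depending on $\nu$ and $\data(B_{4R})$ is routine.
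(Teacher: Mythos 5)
Your overall architecture is correct — a De Giorgi iteration on dyadic levels $k_j=(1+2^{-j})\delta t$ and radii $r_j=(1+2^{-j})R$, driven by Lemma~\ref{lem:DG}, seeded by the smallness of $Y_0$ from Lemma~\ref{lem:key}, and closed by Lemma~\ref{lem:it}. But you have mis-located what you call the ``technical heart,'' and the device you propose to handle it is not the one the paper uses.

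First, you propose to redo a case analysis in the iteration (``whichever of $(\delta t/R^s)^p$ and $a^-_{B_{4R}}(\delta t/R)^q$ dominates \ldots governs which Sobolev exponent one uses'') and to invoke Lemma~\ref{lem:iso} to treat the $W^{1,q}$-part, ``mirroring Steps 2--3 of Lemma~\ref{lem:key}.'' Neither is necessary nor is done in the paper. The alternatives \eqref{eq:alt}, \eqref{eq:alt2} and the isoperimetric inequality belong only to Lemma~\ref{lem:key}; the iteration in Lemma~\ref{lem:grow} is alternative-free. The device the paper uses is to fix a \emph{single} exponent
\[
\kappa\coloneqq\frac{p^{*}_{s}}{p},
\]
and to observe that by \eqref{eq:spq} one always has $\kappa<q^{*}/q$ (choosing $q^{*}$ large when $q\ge n$). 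Then the quantity $G_{B_{4R}}((u-k)_-)$, raised to the power $\kappa$, is controlled simultaneously: the $L^{\kappa p}$-norm of $(u-k)_-$ by \eqref{eq:fsp} since $\kappa p=p^{*}_{s}$, and the $L^{\kappa q}$-norm by the ordinary Sobolev embedding since $\kappa q<q^{*}$. This gives, after Lemma~\ref{lem:DG} and the easy tail estimate \eqref{eq:g4.2}, the $\nu$-free recursion
\[
\phi_{i}\le c_{2}\,2^{i(n+2q)\kappa}\,\phi_{i-1}^{\kappa},
\qquad
\phi_i=\frac{|A^-(k_i,r_i)|}{|B_{r_i}|},
\]
with no $\nu^{-\theta}$ prefactor; all the $\nu$-dependence lives in the threshold for $\phi_0$. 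Inserting the isoperimetric inequality instead would gratuitously reintroduce $\nu$ into the recursion constant and make the bookkeeping worse, and it is unclear your sketch actually closes.

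Second, you do not address the reduction that makes the ordinary Sobolev step legitimate. The paper first dispatches the case $a^-_{B_{4R}}=0$ by quoting the purely nonlocal result \cite[Lemma~6.5]{C1}, and only then treats $a^-_{B_{4R}}>0$, where $u\in W^{1,q}(B_{2R})$ so the $W^{1,q}$-Sobolev embedding is available at all. Without this reduction, the $W^{1,q}$-part of your argument has nothing to hold on to on $\{a=0\}$. With $\kappa$ and this reduction in place, the remainder of your plan (the tail verification via \eqref{eq:as1} and the choice of $\delta$ small enough so that the Lemma~\ref{lem:key} bound on $\phi_0$ beats the Lemma~\ref{lem:it} threshold) matches the paper.
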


\begin{proof}
If $a^{-}_{B_{4R}}=0$, then \eqref{eq:as1} follows from \cite[Lemma 6.5]{C1}. Hence we only consider the case $a^{-}_{B_{4R}}>0$, in which $u\in W^{1,q}(B_{2R})$. Choose numbers $h,k$ such that $\delta t\leq h<k\leq 2\delta t$ and radii $\rho,r$ such that $2R\leq\rho<r\leq 4R$. 
Define
\begin{align*}
A^-(h,\rho)=B_{\rho}\cap\{u\leq h\}.
\end{align*}
Note that we can always choose
\begin{equation}\label{eq:exponent}
\kappa \coloneqq \frac{p^{*}_{s}}{p} < \frac{q^{*}}{q}.
\end{equation}
Indeed, if $q < n$, then we have $\kappa = n/(n-sp) < n/(n-q) = q^{*}/q$ from \eqref{eq:spq}. If $q \ge n$, then we can choose the number $q^{*}$ large enough to satisfy \eqref{eq:exponent}.

Recalling that $u\in W^{1,q}(B_{2R})$, we now apply Sobolev's embedding theorem and \eqref{eq:fsp} to have
\begin{equation*}
\begin{aligned}
\lefteqn{ \left[\left(\frac{k-h}{\rho^{s}}\right)^{p}+a^{-}_{B_{4R}}\left(\frac{k-h}{\rho}\right)^{q}\right]\left(\frac{|A^{-}(h,\rho)|}{|B_{\rho}|}\right)^{\frac{1}{\kappa}} }\\
& \le \left(\mean{B_{\rho}}\left[\left(\frac{(u-k)_{-}}{\rho^{s}}\right)^{p}+a^{-}_{B_{4R}}\left(\frac{(u-k)_{-}}{\rho}\right)^{q}\right]^{\kappa}\,dx\right)^{\frac{1}{\kappa}} \\
& \le c\mean{B_{\rho}}\int_{B_{\rho}}\frac{|(u(x)-k)_{-}-(u(y)-k)_{-}|^{p}}{|x-y|^{n+sp}}\,dxdy + c\mean{B_{\rho}}a^-_{B_{4R}}|D(u-k)_{-}|^{q}\,dx \\
& \quad + c\mean{B_{\rho}}\left(\frac{(u-k)_{-}}{\rho^{s}}\right)^{p} + a^{-}_{B_{4R}}\left(\frac{(u-k)_{-}}{\rho}\right)^{q}\,dx \\
& \le c\mean{B_{\rho}}\int_{B_{\rho}}\frac{|(u(x)-k)_{-}-(u(y)-k)_{-}|^{p}}{|x-y|^{n+sp}}\,dxdy + c\mean{B_{\rho}}a^-_{B_{4R}}|D(u-k)_{-}|^{q}\,dx \\
& \quad + c\left[\left(\frac{\delta t}{\rho^{s}}\right)^{p}+a^{-}_{B_{4R}}\left(\frac{\delta t}{\rho}\right)^{q}\right]\frac{|A^{-}(k,r)|}{|B_{r}|},
\end{aligned}
\end{equation*}
where we have also used the fact that $k\leq 2\delta t$.
Applying Lemma~\ref{lem:DG} to the right-hand side of the above display, and then recalling  \eqref{eq:Gg} and the fact that $\rho\in[2R,4R]$, we find
\begin{align}\label{eq:g4}
\begin{split}
\lefteqn{ G_{B_{4R}}(k-h) 
\left(\dfrac{|A^-(h,\rho)|}{|B_{\rho}|}\right)^{\frac{1}{\kappa}} }\\
&\leq \dfrac{c}{|B_r|}\left(\frac{r}{r-\rho}\right)^{n+q}\left[\dfrac{\|(u-k)_-\|^p_{L^p(B_r)}}{r^{sp}}+a^{-}_{B_{4R}}\dfrac{\|(u-k)_-\|^q_{L^q(B_r)}}{r^q}\right]\\
&\quad +\frac{c}{|B_{r}|}\left(\frac{r}{r-\rho}\right)^{n+q}\|(u-k)_-\|_{L^1(B_r)}\cdot \tail((u-k)_-;r)\\
&\quad+c G_{B_{4R}}(\delta t)\dfrac{|A^-(k,r)|}{|B_r|}.
\end{split}
\end{align}
For the right-hand side of the above inequality, note that
\begin{align}\label{eq:g4.1}
\|(u-k)_-\|^{\theta}_{L^{\theta}(B_r)}=\int_{A^-(k,r)}(k-u(x))^{\theta}\,dx\leq |A^-(k,r)|k^{\theta} 
\end{align}
for any $\theta\geq 1$. Also, since $r\in[2R,4R]$ and $k\in[\delta t,2\delta t]$, we discover 
\begin{align}\label{eq:g4.2}
\begin{split}
\tail((u-k)_-;r)&=\int_{\ern\setminus B_{r}}\dfrac{(k-u(x))^{p-1}_+}{|x|^{n+sp}}\,dx\\
&\leq c\left[k^{p-1}\int_{\ern\setminus B_{\rho}}\dfrac{dx}{|x|^{n+sp}}+\int_{\ern\setminus B_{4R}}\dfrac{u_-(x)^{p-1}}{|x|^{n+sp}}\,dx\right]\\
&=c\left[\rho^{-sp}k^{p-1}+\tail(u_-;4R)\right]\\
\overset{\eqref{eq:as1}}&{\leq} cg_{B_{4R}}(\delta t). 
\end{split}
\end{align}
Connecting \eqref{eq:g4.1} and \eqref{eq:g4.2} to \eqref{eq:g4}, we have
\begin{align*}
G_{B_{4R}}(k-h)\left(\dfrac{|A^-(h,\rho)|}{|B_{\rho}|}\right)^{\frac{1}{\kappa}}
\leq c\left(\frac{r}{r-\rho}\right)^{n+q}G_{B_{4R}}(k)\dfrac{|A^-(k,r)|}{|B_r|}
\end{align*}
and so 
\begin{align}\label{eq:g6}
\dfrac{|A^-(h,\rho)|}{|B_{\rho}|}
\leq c\left(\frac{r}{r-\rho}\right)^{(n+q)\kappa}\left(\frac{G_{B_{4R}}(k)}{G_{B_{4R}}(k-h)}\right)^{\kappa}\left(\dfrac{|A^-(k,r)|}{|B_{r}|}\right)^{\kappa}.
\end{align}

For $i\in\mathbb{N}\cup\{0\}$, define 
\[ r_i=(1+2^{-i})R, \quad  k_i=(1+2^{-i})\delta t \quad \text{and} \quad \phi_i\coloneqq\frac{|A^-(k_i,r_i)|}{|B_{r_i}|}. \] 
Accordingly, we apply \eqref{eq:g6} with the choices $h=k_i$, $k=k_{i-1}$, $\rho=r_i$ and $r=r_{i-1}$. Then, since $k_{i-1}-k_{i}=2^{-i}\delta t$ and $\frac{r_{i-1}}{r_{i-1}-r_{i}}\leq 2^{i}$, we arrive at
\begin{align*}
\phi_{i} \le c_{2}2^{i(n+2q)\kappa}\phi_{i-1}^{\kappa}
\end{align*}
with $c_{2}=c_{2}(\data(B_{4R}))$.

Choose $\delta=\delta(\data(B_{4R}),\nu)\in(0,\frac{1}{8}]$ such that
\begin{align*}
\tau \equiv \tau(\delta) \coloneqq \frac{c_1}{\nu^{\max\{\frac{2q}{q-1},\frac{2n}{n-1}\}}}\left(\delta^{\frac{p-1}{2}}+\frac{1}{|\log\delta|^{\frac{n}{n-1}\frac{q-1}{q}}}\right)\leq c_2^{-\frac{1}{1-\kappa}}2^{-(n+2q)\frac{\kappa}{(1-\kappa)^2}},
\end{align*} 
where $c_1$ is the constant determined in Lemma~\ref{lem:key}. Then we apply Lemma~\ref{lem:key} in order to have
\begin{align*}
\phi_0=\frac{|A^-(2\delta t,2R)|}{|B_{2R}|}
\leq c_{2}^{-1/(\kappa-1)}2^{-(n+2q)\kappa/(\kappa-1)^2}.
\end{align*}
Therefore, Lemma~\ref{lem:it} implies $\lim_{i\rightarrow\infty}\phi_i=0$, and we conclude that $u\geq \delta t$ in $B_{R}$.
\end{proof}

\section{Proof of Theorems~\ref{thm:hol} and \ref{thm:har}}\label{sec:5}
In this section, we prove Theorems~\ref{thm:hol} and \ref{thm:har}. As in the previous section, we assume that $K_{sp}$ is symmetric and satisfy \eqref{eq:ker}, $F$ satisfies \eqref{eq:F}, and $a(\cdot)$ satisfies \eqref{eq:a} and \eqref{eq:hol.a}. Also, we assume that $s,p,q$ and $\alpha$ satisfy \eqref{eq:spq} and \eqref{eq:qspa2}. 

\subsection{Proof of Theorem~\ref{thm:hol}} 
By translation, without loss of generality we assume $x_0=0$. Let $\delta\in(0,1/8]$ be the constant defined in Lemma \ref{lem:grow}. Choose $\gamma = \gamma(\data(B_{4R})) \in (0,1)$ such that 
\begin{align}\label{eq:c1}
0<\gamma\leq\min\left\{\frac{s}{2},\log_4\left(\dfrac{2}{2-\delta}\right)\right\}
\end{align}
and
\begin{align}\label{eq:c2}
\int^{\infty}_{4}\dfrac{(\rho^{\gamma}-1)^{p-1}}{\rho^{1+sp}}\,d\rho\leq\dfrac{s\delta^{p-1}}{8^{p+1}n|B_1|}.
\end{align}
Observe that the left-hand side of \eqref{eq:c2} is an increasing function of $\gamma$. Thus, if \eqref{eq:c2} holds for $\gamma$ determined in \eqref{eq:c1}, then \eqref{eq:c2} holds for any $\beta\leq \gamma$ as well. We choose the number
\begin{align}\label{eq:c3}
j_0 \coloneqq \left\lceil \dfrac{2}{sp}\log_4\left(\dfrac{2^{p}\left(1+n|B_1|/(sp)\right)}{\delta^{p-1}}\right) \right\rceil,
\end{align}
where $\lceil t \rceil$ denotes the least integer greater than or equal to $t$.

We will show that there exist a non-decreasing sequence $\{m_i\}$ and a non-increasing sequence $\{M_i\}$ such that for any $i\in\mathbb{N}\cup\{0\}$,
\begin{align}\label{eq:c4}
m_i\leq u\leq M_i\,\,\text{ in }\,\, B_{4^{1-i}R}\quad\text{and}\quad M_i-m_i=4^{-\gamma i}L,
\end{align}
where $L$ is defined as
\begin{align}\label{eq:c5}
L \coloneqq 2^{1+sj_{0}}\|u\|_{L^{\infty}(B_{4R})}+[(4R)^{sp}\text{Tail}(u;4R)]^{\frac{1}{p-1}}.
\end{align}
We use strong induction on $i$. Let $m_i \coloneqq -4^{-\gamma i}L/2$ and $M_i \coloneqq 4^{-\gamma i}L/2$ with $i=0,\dots,j_0$. Then from \eqref{eq:c1} and \eqref{eq:c5} we notice that \eqref{eq:c4} holds for $i=0,\dots,j_0$. Indeed, we have
\begin{align*}
m_i=-4^{-\gamma i}L/2\leq -4^{-si/2}2^{sj_0}\|u\|_{L^{\infty}(B_{4R})}=-2^{-si}2^{sj_0}\|u\|_{L^{\infty}(B_{4R})}\leq -\|u\|_{L^{\infty}(B_{4R})}\leq u(x)
\end{align*}
for a.e. $x \in B_{4^{1-i}R}$; a similar argument also shows that $u\leq M_i$ a.e. in $B_{4^{1-i}R}$. Now, we choose an integer $j\geq j_0$ and assume the sequences $\{m_i\}$ and $\{M_i\}$ are constructed for $i \in \{1,\ldots,j\}$. Then we are going to prove \eqref{eq:c4} for $i=j+1$, by constructing $m_{j+1}$ and $M_{j+1}$ properly. 

We start by observing that either
\begin{equation}\label{eq:case1}
\left|B_{4^{1-j}R/2}\cap\left\{u\geq m_{j}+\frac{M_{j}-m_{j}}{2}\right\}\right|\geq\frac{1}{2}\left|B_{4^{1-j}R/2}\right|
\end{equation}
or
\begin{equation}\label{eq:case2}
\left|B_{4^{1-j}R/2}\cap\left\{u\geq m_{j}+\frac{M_{j}-m_{j}}{2}\right\}\right|<\frac{1}{2}\left|B_{4^{1-j}R/2}\right|.
\end{equation}
We set 
\[ w \coloneqq 
\begin{cases}
u-m_{j} & \text{if } \eqref{eq:case1}\,\,\text{holds}, \\ 
M_{j}-u & \text{if } \eqref{eq:case2}\,\,\text{holds}
\end{cases} 
\]
and
\[ t = \frac{M_{j}-m_{j}}{2}. \]
In any case, $w$ satisfies \eqref{eq:D} and $w\geq 0$ in $B_{4^{1-j}R}$. Moreover, it holds that
\begin{align}\label{eq:c16}
\left|B_{4^{1-j}R/2}\cap\left\{w\geq t\right\}\right|\geq\frac{1}{2}\left|B_{4^{1-j}R/2}\right|.
\end{align}

We first consider the case \eqref{eq:case1}. 
Fix any $x\in B_{4R}\setminus B_{4^{1-j}R}$, and let $l\in\{0,\dots,j-1\}$ be the unique integer such that $x\in B_{4^{1-l}R}\setminus B_{4^{-l}R}$. Using \eqref{eq:c4} and the monotonicity of $\{m_i\}$, we have
\begin{align*}
w(x) = u - m_j  \ge m_l-M_l +2t = -2t\left(4^{(j-l)\gamma}-1\right) \ge -2t\left[\left(\frac{4^{j}|x|}{R}\right)^{\gamma}-1\right]
\end{align*}
for a.e. $x\in B_{4R}\setminus B_{4^{1-j}R}$.
Meanwhile, from \eqref{eq:c5} we immediately have
\begin{align*}
w(x) \ge -|u(x)| -\frac{L}{2} \quad\text{for a.e. } x \in \ern\setminus B_{4R}.
\end{align*}
Using the above two inequalities and change of variables, we estimate
\begin{equation*}
\begin{aligned}
\lefteqn{ \tail (w_-;4^{1-j}R) }\\
&\leq (2t)^{p-1}\int_{B_{4R}\setminus B_{4^{1-j}R}}\dfrac{[(4^{j}|x|/R)^{\gamma}-1]^{p-1}}{|x|^{n+sp}}\,dx + 2^{p-1}\int_{\ern\setminus B_{4R}}\dfrac{|u(x)|^{p-1}+L^{p-1}}{|x|^{n+sp}}\,dx \\
&\leq (2t)^{p-1}\frac{4^{sp}n|B_{1}|}{(4^{1-j}R)^{sp}}\int_{4}^{\infty}\frac{(\rho^{\gamma}-1)^{p-1}}{\rho^{1+sp}}\,d\rho + 2^{p-1}\left[ \tail(u;4R) + \frac{n|B_{1}|}{sp}\frac{L^{p-1}}{(4R)^{sp}}\right].
\end{aligned}
\end{equation*}
The first term in the right-hand side is further estimated by using \eqref{eq:c2}:
\begin{align*}
(2t)^{p-1}\frac{4^{sp}n|B_{1}|}{(4^{1-j}R)^{sp}}\int^{\infty}_{4}\dfrac{(\rho^{\gamma}-1)^{p-1}}{\rho^{1+sp}}\,d\rho \leq \frac{1}{2}\frac{(\delta t)^{p-1}}{(4^{1-j}R)^{sp}} \le \frac{1}{2}g_{B_{4^{1-j}R}}(\delta t).
\end{align*}
For the second term, note from \eqref{eq:c1} that $\gamma\leq\frac{s}{2}<\frac{sp}{2(p-1)}$. Then in light of \eqref{eq:D}, \eqref{eq:c1}, \eqref{eq:c3} and \eqref{eq:c5}, we find
\begin{equation*}
\begin{aligned}
2^{p-1}\left[ \tail(u;4R) + \frac{n|B_{1}|}{sp}\dfrac{L^{p-1}}{(4R)^{sp}}\right] & \le 2^{p-1}\left(1+\frac{n|B_{1}|}{sp}\right)\frac{L^{p-1}}{(4R)^{sp}} \\
& = 2^{p-1}4^{\gamma(p-1)j-spj}\left(1+\frac{n|B_{1}|}{sp}\right)\frac{t^{p-1}}{(4^{1-j}R)^{sp}} \\
& \le \frac{1}{2}\frac{(\delta t)^{p-1}}{(4^{1-j}R)^{sp}} \le \frac{1}{2}g_{B_{4^{1-j}R}}(\delta t).
\end{aligned}
\end{equation*}
Combining the last three displays gives
\[ \tail(w_{-};4^{1-j}R) \le g_{B_{4^{1-j}R}}(\delta t). \]
With this and \eqref{eq:c16} at hand, we can apply Lemma \ref{lem:grow} to $w$, which gives
\begin{align*}
w\geq \delta t \quad\text{in }B_{4^{-j}R}.
\end{align*}
Summarizing, in the case \eqref{eq:case1}, we use  \eqref{eq:c1} and \eqref{eq:c4} together with the above estimates to obtain
\begin{align*}
u \ge m_{j} + \delta t  = m_{j} + \frac{\delta 4^{-\gamma j}L}{2} \geq m_{j} + 4^{-\gamma j}(1-4^{-\gamma})L \quad\text{in}\,\,B_{4^{-j}R}.
\end{align*}
By setting $M_{j+1} \coloneqq M_j$ and $m_{j+1} \coloneqq m_{j} + 4^{-\gamma j}(1-4^{-\gamma})L$, \eqref{eq:c4} is true for $i=j+1$. 

In the remaining case \eqref{eq:case2}, we can argue in a completely similar way, this time with $m_{j+1} \coloneqq m_j$ and $M_{j+1} \coloneqq M_{j} - 4^{-\gamma j}(1-4^{-\gamma})L$, to obtain \eqref{eq:c4} for $i=j+1$. By strong induction on $i$, we conclude that \eqref{eq:c4} holds for any $i\in\mathbb{N}\cup\{0\}$, and H\"{o}lder continuity of $u$ follows in a standard way.
\, \hfill \qed

\subsection{Harnack's inequality}
We next prove Harnack's inequality in Theorem~\ref{thm:har}. 
The following lemma can be proved in a very similar way as in Lemma~\ref{lem:grow}.

\begin{lemma}\label{lem:grow*}
Let $u \in \mathcal{A}(\Omega) \cap L^{p-1}_{sp}(\mathbb{R}^{n})$ be a minimizer of $\mathcal{E}$ which is nonnegative in a ball $B_{16R}\Subset \Omega$ with $R\leq 1$. When $sp \le n$, assume further that $u$ is bounded in $B_{16R}$.
Suppose that
\begin{align*}
|B_{R}\cap\{u\geq t\}|\geq\nu^k|B_{R}|
\end{align*}
for some $t>0$ and $\nu\in(0,1)$. Then there exists $\delta=\delta(\data(B_{16R}),\nu)\in(0,\frac{1}{8}]$ such that if
\begin{align*}
\tail(u_-;16R)\leq g_{B_{16R}}(\delta^k t),
\end{align*}
then we have $u\geq \delta^k t$ in $B_R$.
\end{lemma}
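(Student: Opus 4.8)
The plan is to transcribe the proof of Lemma~\ref{lem:grow} almost verbatim, replacing the threshold level $\delta t$ by $\delta^{k}t$ and the density parameter $\nu$ by $\nu^{k}$ throughout, and to concentrate the real work on arranging that the resulting choice of $\delta$ depends on $\data(B_{16R})$ and $\nu$ but \emph{not} on $k$. As in Lemma~\ref{lem:grow}, the case $a^{-}_{B_{16R}}=0$ is purely nonlocal and can be handled exactly as in \cite[Lemma~6.5]{C1}; so I would assume $a^{-}_{B_{16R}}>0$, in which case $u\in W^{1,q}(B_{2R})$ and the double phase quantity $G_{B_{16R}}$ of \eqref{eq:Gg} is genuinely two-sided.

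The first main step is the De Giorgi iteration for the sub-level sets of $u$ at level $\delta^{k}t$. With $\kappa=p^{*}_{s}/p<q^{*}/q$ chosen as in \eqref{eq:exponent}, I would combine Sobolev's embedding, the fractional Sobolev--Poincar\'e inequality \eqref{eq:fsp} and the Caccioppoli estimate of Lemma~\ref{lem:DG}, applied on concentric balls $B_{\rho}\subset B_{r}$ with $R\le\rho<r\le 2R$ and to the truncations $(u-h)_{-}$ with $h\in[\delta^{k}t,2\delta^{k}t]$, to obtain, for $r_{i}=(1+2^{-i})R$, $k_{i}=(1+2^{-i})\delta^{k}t$ and $\phi_{i}=|A^{-}(k_{i},r_{i})|/|B_{r_{i}}|$, an inequality of the form $\phi_{i}\le c_{2}2^{(n+2q)\kappa i}\phi_{i-1}^{\kappa}$ with $c_{2}=c_{2}(\data(B_{16R}))$. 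The nonlocal tails produced by Lemma~\ref{lem:DG} are estimated precisely as in \eqref{eq:g4.1}--\eqref{eq:g4.2}, using the hypothesis $\tail(u_{-};16R)\le g_{B_{16R}}(\delta^{k}t)$ together with the elementary splitting of $\tail((u-h)_{-};r)$ and the fact that, since $u_{-}\equiv 0$ on $B_{16R}$, all tail quantities at radii in $[R,4R]$ coincide with $\tail(u_{-};16R)$ up to harmless dimensional constants; the minor radius bookkeeping (the lemmas are all applied on subballs of $B_{16R}$, which leaves ample room) is handled as in Lemma~\ref{lem:grow}.

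The second step is to choose $\delta=\delta(\data(B_{16R}),\nu)$, independent of $k$, so that the iteration starts, i.e.\ so that $\phi_{0}=|A^{-}(2\delta^{k}t,R)|/|B_{R}|$ lies below the threshold $c_{2}^{-1/(1-\kappa)}2^{-(n+2q)\kappa/(1-\kappa)^{2}}$ required by Lemma~\ref{lem:it}. Here I would run the measure-shrinking argument of Lemma~\ref{lem:key} in the form adapted to the $k$-fold situation: starting from $|B_{R}\cap\{u\ge t\}|\ge\nu^{k}|B_{R}|$, I bound $|B_{R}\cap\{u<2\delta^{k}t\}|$ by iterating the De Giorgi--Poincar\'e inequality of Lemma~\ref{lem:iso} over the dyadic levels $2^{-j}t$, $j=1,\dots,\approx k\log_{2}(1/\delta)$, distinguishing exactly the two alternatives \eqref{eq:alt} and \eqref{eq:alt2} as in Steps 1--3 of the proof of Lemma~\ref{lem:key}; then I pick $\delta$ small enough, depending only on $\data(B_{16R})$ and $\nu$, so that the resulting bound is $\le$ the above threshold for every $k\ge 1$. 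Once $\phi_{0}$ is below the threshold, Lemma~\ref{lem:it} gives $\phi_{i}\to 0$, i.e.\ $u\ge\delta^{k}t$ a.e.\ in $B_{R}$.

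I expect the delicate new point to be precisely this $k$-uniform choice of $\delta$. When the dyadic level-set iteration is carried down to level $\delta^{k}t$ under the weakened density $\nu^{k}$, one must verify that the loss propagating through the $\approx k|\log\delta|$ summation steps can still be absorbed by a $\delta$ depending on $\nu$ and the data only: this requires careful control of the exponents coming from Lemma~\ref{lem:iso} (the powers $\tfrac{n}{n-1}\tfrac{q-1}{q}$ and $\tfrac{2q}{q-1},\tfrac{2n}{n-1}$) and of the compounding factor $\delta^{k(p-1)/2}$ arising in the ``nonlocal phase'' alternative $\eqref{eq:alt}_{1}$, balanced against the $\osc$-type bound $(\osc_{B_{r}}u)^{q-p}$ used in Lemma~\ref{lem:DG}. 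Everything else in the argument is a direct transcription of the proof of Lemma~\ref{lem:grow}.
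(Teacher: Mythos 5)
Your outline is faithful to the structure of the proof of Lemma~\ref{lem:grow}, and you correctly identify where the difficulty lies: the $k$-uniform choice of $\delta$. However, you do not resolve it, and a closer look shows that the direct transcription of Lemma~\ref{lem:key} that you propose \emph{fails} precisely at this point, in the ``mixed phase'' alternative.

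Concretely, in the nonlocal-phase alternative $\eqref{eq:alt}_{1}$ the transcription is fine: replacing $\delta$ by $\delta^{k}$ and $\nu$ by $\nu^{k}$ in \eqref{eq:ke7} yields
\[
|B_{2R}\cap\{u<2\delta^{k}t\}|\leq c\,\Bigl(\frac{\delta^{p-1}}{\nu}\Bigr)^{k}|B_{2R}|,
\]
which is $k$-uniformly small as soon as $\delta^{p-1}<\nu$. But in the mixed-phase alternatives $\eqref{eq:alt}_{2}$, the level $2\delta^{k}t$ corresponds to $i\approx k\log_{2}(1/\delta)$ dyadic steps, and the De Giorgi isoperimetric inequality of Lemma~\ref{lem:iso}, applied with the degraded density $\nu^{k}$, produces (cf.\ \eqref{eq:ke8.1}--\eqref{eq:ke9}) a bound of the form
\[
|B_{2R}\cap\{u\leq 2\delta^{k}t\}|\;\lesssim\;\frac{|B_{2R}|}{\nu^{\frac{kn}{n-1}}\bigl(k|\log\delta|\bigr)^{\frac{n}{n-1}\frac{q-1}{q}}}.
\]
Here the numerator loss $\nu^{-kn/(n-1)}$ is exponential in $k$, while the isoperimetric gain $(k|\log\delta|)^{-\frac{n}{n-1}\frac{q-1}{q}}$ from summing the $\approx k|\log\delta|$ dyadic annuli is only polynomial in $k$; the product diverges as $k\to\infty$ for \emph{every} fixed $\delta\in(0,1/8]$. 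The same problem appears in Step~3 (the estimate \eqref{eq:ke10}--\eqref{eq:ke11}) whenever the transition index $\beta$ is small compared to $k$, since then the factor $2^{-\beta(p-1)}$ cannot compensate $\nu^{-k}$. Consequently, the measure threshold needed to start the De Giorgi iteration of Lemma~\ref{lem:grow} cannot be reached with a $\delta$ independent of $k$ by this route. What is missing is a version of the measure-shrinking estimate in which the density enters only through an explicit polynomial factor at a density-dependent level (in the spirit of the $\sigma^{1/(p-1)}$ loss in the purely nonlocal argument of \cite[Lemmas~6.3, 6.5]{C1} and \cite{DKP16}), rather than through the $|\log\delta|^{-c}$-type gain; this is a genuinely new ingredient that your proposal does not supply, and a verbatim change $\delta\mapsto\delta^{k}$, $\nu\mapsto\nu^{k}$ in Lemma~\ref{lem:key} does not produce it. Note that the paper itself does not spell out a proof of Lemma~\ref{lem:grow*}, so you cannot be faulted for not matching it, but the gap in your plan is real and should be flagged.
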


Using Lemma \ref{lem:grow*}, we have the following:

\begin{lemma}\label{lem:wh}
Let $u \in \mathcal{A}(\Omega) \cap L^{p-1}_{sp}(\mathbb{R}^{n})$ be a minimzer of $\mathcal{E}$ which is nonnegative in a ball $B_{16R}\Subset \Omega$ with $R\leq 1$. When $sp \le n$, assume further that $u$ is bounded in $B_{16R}$. Then there exist constants $\varepsilon_{0}\in(0,1)$ and $c\geq 1$, both depending on $\data(B_{16R})$, such that
\begin{align}\label{eq:wh}
\left(\mean{B_{R}}u^{\varepsilon_{0}}\,dx\right)^{\frac{1}{\varepsilon_{0}}}\leq c\inf_{B_R}u + cg^{-1}_{B_{16R}}(\tail(u_-;16R)).
\end{align}
\end{lemma}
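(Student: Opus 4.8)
The plan is to follow the classical Krylov--Safonov/De Giorgi measure-theoretic argument for the weak Harnack inequality, but carried out in the ``$g$-scale'' dictated by the double phase structure, using Lemma~\ref{lem:grow*} as the engine for propagating positivity. First I would normalize: set $M \coloneqq g^{-1}_{B_{16R}}(\tail(u_-;16R))$ and work with $u + M$ in place of $u$, so that the tail term is absorbed into the quantity we are estimating; equivalently one may from the outset assume $\tail(u_-;16R) \le g_{B_{16R}}(\sigma)$ for the relevant level $\sigma$ and keep track of when this fails. The target is a bound of the form $|B_R \cap \{u > \lambda\}| \le c\,\nu^{k}|B_R|$ with $\lambda \simeq \delta^k\,\big(\mean{B_R}u^{\varepsilon_0}\big)^{1/\varepsilon_0}$, which by Cavalieri's principle (summing a geometric-type series in $k$) yields $\big(\mean{B_R}u^{\varepsilon_0}\big)^{1/\varepsilon_0} \le c\,\lambda_\ast$ for the first level $\lambda_\ast$ at which one can no longer iterate, and that level is controlled by $\inf_{B_R}u$ together with $M$.

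The key steps, in order, would be: \textbf{(i)} Fix the infimum side: let $d \coloneqq \inf_{B_R}u + M$ and suppose for contradiction that $\big(\mean{B_R}u^{\varepsilon_0}\big)^{1/\varepsilon_0}$ is much larger than $c\,d$; the goal is to derive a contradiction by showing $u$ is bounded below on $B_R$ by something forcing the average down. \textbf{(ii)} Run the iteration of Lemma~\ref{lem:grow*}: if $|B_R \cap \{u \ge t\}| \ge \nu^k|B_R|$ and the tail smallness $\tail(u_-;16R) \le g_{B_{16R}}(\delta^k t)$ holds, then $u \ge \delta^k t$ in $B_R$, hence $\inf_{B_R}u \ge \delta^k t$; contrapositively, if $\inf_{B_R}u < \delta^k t$ (which is the case when $d$ is small relative to $t$), then either the tail hypothesis fails---giving $M \gtrsim \delta^k t$---or $|B_R \cap \{u \ge t\}| < \nu^k|B_R|$. \textbf{(iii)} Choose $\varepsilon_0$ small depending only on $\data(B_{16R})$ (through $\nu$ and $\delta$, which depend on $\nu$) so that the distribution-function bound $|B_R \cap \{u \ge t\}| < \nu^k|B_R|$ for $t \simeq \delta^{-k}d$ is summable: writing $t_k = \delta^{-k}d$ and using $\{u \ge t_k\}$ one computes
\[
\mean{B_R}u^{\varepsilon_0}\,dx \le (2d)^{\varepsilon_0} + \sum_{k\ge 1} t_k^{\varepsilon_0}\,\frac{|B_R \cap \{t_{k-1} \le u < t_k\}|}{|B_R|} \le (2d)^{\varepsilon_0}\Big(1 + \sum_{k\ge 1}\delta^{-k\varepsilon_0}\nu^{k-1}\Big),
\]
which converges provided $\delta^{-\varepsilon_0}\nu < 1$, i.e. $\varepsilon_0 < \log(1/\nu)/\log(1/\delta)$; this pins down $\varepsilon_0$. \textbf{(iv)} Collect the cases: in every $k$ either the tail failure gives $M \gtrsim \delta^k t_k = d$ (so $d$ already dominates, nothing to prove), or the measure bound kicks in and the sum above gives $\big(\mean{B_R}u^{\varepsilon_0}\big)^{1/\varepsilon_0} \le c\,d = c\inf_{B_R}u + c\,M$, which is \eqref{eq:wh}.

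\textbf{Main obstacle.} The delicate point is the self-consistency between the choice of $\nu$, the resulting $\delta = \delta(\data(B_{16R}),\nu)$ from Lemma~\ref{lem:grow*}, and the requirement $\varepsilon_0 < \log(1/\nu)/\log(1/\delta)$: one must verify that $\delta$ does not degenerate faster than $\nu$ as $\nu \to 0$, so that a strictly positive $\varepsilon_0$ survives. Tracing the dependence in Lemma~\ref{lem:grow*}/\ref{lem:key}, $\delta$ is chosen to make $\tau(\delta) = \frac{c_1}{\nu^{\max\{2q/(q-1),2n/(n-1)\}}}\big(\delta^{(p-1)/2} + |\log\delta|^{-\frac{n}{n-1}\frac{q-1}{q}}\big)$ smaller than a fixed power threshold, so $\delta$ depends on $\nu$ only polynomially-with-a-log, and $\log(1/\delta) \simeq \log(1/\nu)$ up to constants and $\log\log$ factors---enough to keep $\varepsilon_0$ bounded below by a constant in $\data(B_{16R})$. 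The second technical nuisance is bookkeeping the tail term uniformly across all scales $k$: one must confirm that $g_{B_{16R}}(\delta^k t)$ appearing in Lemma~\ref{lem:grow*} is monotone and compatible with replacing $u$ by $u+M$, which follows from the explicit form of $g_{B_{16R}}$ in \eqref{eq:Gg} and the definition $M = g^{-1}_{B_{16R}}(\tail(u_-;16R))$. Once these two points are secured, the remaining computations are the routine Cavalieri summation sketched above.
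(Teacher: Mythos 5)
Your proposal is correct and follows essentially the same route as the paper: the paper fixes $\nu=\tfrac12$ in Lemma~\ref{lem:grow*}, sets $\varepsilon_0=\frac{\log\nu}{2\log\delta}$, proves the distribution-function decay $|A^+(t,R)|/|B_R|\le (d/(\delta t))^{2\varepsilon_0}$ with $d=\inf_{B_R}u+g^{-1}_{B_{16R}}(\tail(u_-;16R))$ by exactly your dichotomy (either the tail hypothesis fails, which bounds $t$ by $d$, or Lemma~\ref{lem:grow*} applies and bounds $\inf u$ from below), and concludes by the Cavalieri computation. Your ``main obstacle'' about $\delta$ degenerating as $\nu\to 0$ is moot, since $\nu$ is fixed once and for all and only the exponent $k$ varies.
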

\begin{proof}
We assume that $u$ does not vanish on $B_{R}$, otherwise there is nothing to prove.
Let $\delta\in(0,\frac{1}{8}]$ be the constant determined in Lemma \ref{lem:grow*} with the choice $\nu=\frac{1}{2}$. We accordingly set
\begin{align}\label{eq:wh1}
\varepsilon_{0} \coloneqq \frac{\log\nu}{2\log\delta} = \frac{1}{2\log_{\frac{1}{2}}\delta} \in(0,1).
\end{align}	
We claim that for any $t\geq 0$,
\begin{align}\label{eq:wh2}
\inf_{B_R}u+g^{-1}_{B_{16R}}(\tail(u_-;16R))\geq\delta\left(\dfrac{|A^+(t,R)|}{|B_R|}\right)^{\frac{1}{2\varepsilon_{0}}}t.
\end{align}
We only consider the case $t\in[0,\sup_{B_R}u)$, otherwise \eqref{eq:wh2} is trivial.
	
For each $t\in[0,\sup_{B_R}u)$, let $k=k(t)$ be the unique integer satisfying
\begin{align}\label{eq:wh4}
\log_{\frac{1}{2}}\dfrac{|A^+(t,R)|}{|B_R|}\leq k<1+\log_{\frac{1}{2}}\dfrac{|A^+(t,R)|}{|B_R|}.
\end{align}	
Then \eqref{eq:wh1} and \eqref{eq:wh4} imply
\begin{align}\label{eq:wh5}
\delta^k\geq\delta\left(\dfrac{|A^+(t,R)|}{|B_R|}\right)^{\frac{1}{2\varepsilon_{0}}}.
\end{align}
We assume that 
\begin{align*}
\quad\tail(u_-;16R)<g_{B_{16R}}(\delta^kt),
\end{align*}
otherwise \eqref{eq:wh2} again follows directly. Now, observe that \eqref{eq:wh4} implies
\begin{align*}
|A^+(t,R)|\geq 2^{-k}|B_R|.
\end{align*}	
Then we are in a position to apply Lemma~\ref{lem:grow*}, which gives
\begin{align*}
u\geq\delta^k t\quad\text{in }B_{R},
\end{align*}
and so
\begin{align}\label{eq:wh6}
\inf_{B_R}u+g^{-1}_{B_{16R}}(\tail(u_-;16R))\geq\delta^{k}t.
\end{align}
Combining \eqref{eq:wh6} and \eqref{eq:wh5}, we have \eqref{eq:wh2}. 
At this moment, a similar argument as in the proof of \cite[Proposition~6.8]{C1} yields \eqref{eq:wh}.
\end{proof}
Now we have the following local sup-estimate.
\begin{lemma}\label{lem:bdd}
Let $u \in \mathcal{A}(\Omega) \cap L^{p-1}_{sp}(\mathbb{R}^{n})$ be a minimizer of $\mathcal{E}$ and $B_{2r} \equiv B_{2r}(z) \Subset \Omega$ a ball. When $sp \le n$, assume further that $u$ is bounded in $B_{2r}$. Then for any $\delta\in(0,1)$, we have
\begin{align}\label{eq:bdd}
\sup_{B_{r}}u_+\leq c_{\delta}G^{-1}_{B_{2r}}\left(\mean{B_{2r}}G_{B_{2r}}(u_+)\,dx\right)+\delta\,g^{-1}_{B_{r}}\left(\tail(u_+;r)\right)
\end{align}
with $c=c(\data(B_{2r}))$ and $c_{\delta}=c_{\delta}(\data(B_{2r}),\delta)$.
\end{lemma}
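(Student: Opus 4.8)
\emph{Overall strategy.}\ The plan is a De~Giorgi iteration producing a threshold $\ell$, comparable to the right-hand side of \eqref{eq:bdd}, above which $u$ cannot rise on $B_r$. Translating so that $z=0$ and fixing $\delta\in(0,1)$, I write $G\coloneqq G_{B_{2r}}$, $g\coloneqq g_{B_{2r}}$ (so that $t\,g(t)=G(t)$, cf.\ \eqref{eq:Gg}) and set
\[
\ell\coloneqq c_\delta\,G^{-1}\!\Big(\mean{B_{2r}}G(u_+)\,dx\Big)+\delta\,g^{-1}_{B_r}\big(\tail(u_+;r)\big),
\]
the constant $c_\delta=c_\delta(\data(B_{2r}),\delta)\ge1$ to be fixed large at the end; the goal is $u\le\ell$ a.e.\ in $B_r$. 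Both terms in $\ell$ are finite, since $\mathcal A(\Omega)\subset W^{s,p}(\Omega)\cap L^{q}_{\loc}(\Omega)$ and $u\in L^{p-1}_{sp}(\ern)$; when $sp>n$, in addition $u$ is continuous with $\|u\|_{L^{\infty}(B_{2r})}\le c\,[u]_{W^{s,p}(B_{2r})}$ by \eqref{eq:fsp2}. Hence in every case I may assume $\ell<\|u\|_{L^{\infty}(B_{2r})}$, as otherwise $\sup_{B_r}u_+\le\|u\|_{L^{\infty}(B_{2r})}\le\ell$ already. (If $2r>1$, a scaling reduces to $2r\le1$, so that Lemma~\ref{lem:DG} is applicable.)

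\emph{The iteration.}\ For $j\ge0$ put $r_j\coloneqq(1+2^{-j})r$, $\bar r_j\coloneqq\tfrac12(r_j+r_{j+1})$, $k_j\coloneqq(1-2^{-j-1})\ell$ (so $k_0=\ell/2$ and $k_j\uparrow\ell$), $w_j\coloneqq(u-k_j)_+$, $A_j\coloneqq B_{r_j}\cap\{u>k_j\}$, $\phi_j\coloneqq|A_j|/|B_{r_j}|$ and $Y_j\coloneqq\mean{B_{r_j}}G(w_j)\,dx$. As $0\le k_j<\ell<\|u\|_{L^{\infty}(B_{2r})}$, Lemma~\ref{lem:DG} applies (with reference ball $B_{2r}$ and concentric radii $r_{j+1}<\bar r_j$ in $[r,2r]$); discarding its nonnegative cross term and using $\bar r_j-r_{j+1}\simeq2^{-j}r$, $\bar r_j^{sp}\simeq(2r)^{sp}$, $\bar r_j^{q}\simeq(2r)^{q}$, it gives
\[
[w_j]^{p}_{W^{s,p}(B_{r_{j+1}})}+a^{-}_{B_{2r}}[w_j]^{q}_{W^{1,q}(B_{r_{j+1}})}\le c\,2^{j(n+q)}\Big(|B_{r_j}|\,Y_j+\|w_j\|_{L^{1}(B_{\bar r_j})}\tail(w_j;\bar r_j)\Big).
\]
I bound the tail by $\tail(w_j;\bar r_j)\le\tail(u_+;r)$ (since $w_j\le u_+$ and $\bar r_j\ge r$), then by $\tail(u_+;r)\le g_{B_r}(\ell/\delta)\le\delta^{-(q-1)}g_{B_r}(\ell)\le c(\data)\,\delta^{-(q-1)}g(\ell)$, using $\ell\ge\delta\,g^{-1}_{B_r}(\tail(u_+;r))$ and the comparison $g_{B_r}\le c(\data(B_{2r}))\,g$ addressed below. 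For the $L^1$-norm I use the splitting $\|w_j\|_{L^{1}(B_{\bar r_j})}\le\ell\,|A_j|+\frac{\ell}{G(\ell)}\int_{B_{r_j}}G(w_j)\,dx$, the second term coming from $w_j=G(w_j)/g(w_j)<\ell\,G(w_j)/G(\ell)$ on $\{w_j>\ell\}$. Since $g(\ell)=G(\ell)/\ell$, these combine into $\|w_j\|_{L^{1}(B_{\bar r_j})}\tail(w_j;\bar r_j)\le c(\data)\,\delta^{-(q-1)}\,|B_{r_j}|\big(\phi_j\,G(\ell)+Y_j\big)$.

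\emph{Closing the recursion.}\ I then combine the Sobolev embeddings $W^{s,p}\hookrightarrow L^{p^{*}_{s}}$ and $W^{1,q}\hookrightarrow L^{q^{*}}$ (via \eqref{eq:fsp} and the classical Sobolev--Poincar\'e inequality, reabsorbing the means into lower-order terms) with the displayed Caccioppoli bound, raising everything to the power $\kappa\coloneqq p^{*}_{s}/p$, which by \eqref{eq:exponent} may be chosen $<q^{*}/q$ as well --- the only place where the admissibility of $(s,p,q,\alpha)$ enters at the level of exponents. This yields $\big(\mean{B_{r_{j+1}}}G(w_j)^{\kappa}\,dx\big)^{1/\kappa}\le c(\data)\,\delta^{-(q-1)}\,2^{j(n+q)}\big(\phi_j\,G(\ell)+Y_j\big)$. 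Since $G(w_{j+1})\le G(w_j)$ and $\{w_{j+1}>0\}\subset A_{j+1}$, H\"older's inequality gives $Y_{j+1}\le\big(\mean{B_{r_{j+1}}}G(w_j)^{\kappa}\,dx\big)^{1/\kappa}\phi_{j+1}^{1-1/\kappa}$, while Chebyshev's inequality together with $G(\lambda t)\ge\lambda^{q}G(t)$ for $0<\lambda\le1$ gives $\phi_{j+1}\le c\,2^{jq}Y_j/G(\ell)$ and $\phi_j\le c\,2^{jq}Y_{j-1}/G(\ell)$. Feeding these in and using $Y_j\le Y_{j-1}$ produces, for $j\ge1$, a recursion
\[
Y_{j+1}\le C(\data(B_{2r}),\delta)\,b^{\,j}\,G(\ell)^{-\beta}\,Y_{j-1}^{\,1+\beta},\qquad\beta\coloneqq1-\tfrac1\kappa\in(0,1),\quad b>1,
\]
with $b=b(\data(B_{2r}))$. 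Because $G(\lambda t)\ge\lambda^{p}G(t)$ for $\lambda\ge1$, one has $G(\ell)\ge c_\delta^{\,p}\mean{B_{2r}}G(u_+)\,dx$, so $Y_0/G(\ell)\le c_\delta^{-p}$ and (from the easier $j=0$ step) $Y_1/G(\ell)\le c(\data,\delta)c_\delta^{-p(1+\beta)}$; taking $c_\delta$ large makes the smallness hypothesis of Lemma~\ref{lem:it} hold along both the even and the odd subsequences, whence $Y_j\to0$. A monotone convergence argument then gives $\int_{B_r}G((u-\ell)_+)\,dx=0$, i.e.\ $u\le\ell$ a.e.\ in $B_r$, which is precisely the asserted bound once the comparison between $g_{B_r}$ and $g_{B_{2r}}$ is taken into account.

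\emph{Main difficulty.}\ The crux is the combined Sobolev--Caccioppoli step: the double-phase modulus $G$ must be handled by a single H\"older exponent $\kappa$ that is simultaneously admissible for the $W^{s,p}$ and the $W^{1,q}$ phase (precisely the content of \eqref{eq:exponent}), and the nonlocal tail must be shown to contribute only a quantity proportional to a negative power of $\delta$ times $g(\ell)$, so that the choice $\ell\ge\delta\,g^{-1}(\tail)$ absorbs it without spoiling the geometric decay of $\{Y_j\}$. A smaller but genuine point is that Lemma~\ref{lem:DG} naturally delivers $g_{B_{2r}}$ whereas the statement carries $g_{B_r}$: the inequality $g_{B_{2r}}\le g_{B_r}$ is immediate from $a^{-}_{B_r}\ge a^{-}_{B_{2r}}$, and the reverse bound $g_{B_r}\le c(\data(B_{2r}))\,g_{B_{2r}}$ on the relevant range $[0,\|u\|_{L^{\infty}(B_{2r})}]$ follows from \eqref{eq:hol.a}, the constraint $q\le sp+\alpha$ (respectively $\eqref{eq:qspa2}_{2}$ when $sp>n$, via \eqref{eq:fsp2}) and $r\le1$, by treating separately the cases $a^{-}_{B_{2r}}\ge[a]_{\alpha}(2r)^{\alpha}$ and $a^{-}_{B_{2r}}<[a]_{\alpha}(2r)^{\alpha}$ and absorbing the surplus power of $r$ through the bound on $u$ recorded in $\data(B_{2r})$; the resulting constant is harmless and is swallowed by $c_\delta$ after relabelling $\delta$.
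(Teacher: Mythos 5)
Your proof is correct and follows essentially the same De Giorgi--type iteration as the paper's: both start from the Caccioppoli bound of Lemma~\ref{lem:DG}, pass through the fractional and classical Sobolev--Poincar\'e inequalities at the common exponent $\kappa$ of \eqref{eq:exponent}, and absorb the nonlocal tail by the $\delta$-weighted term in the level $\ell$ (the paper's choice of $k_0$ and the condition \eqref{eq:k0.1}). The only organizational differences are that you close a two-step recursion $Y_{j+1}\lesssim b^{j}G(\ell)^{-\beta}Y_{j-1}^{1+\beta}$ by applying Chebyshev at two consecutive levels, whereas the paper runs the Sobolev step directly on $w_{j+1}$ and uses the single Chebyshev estimate \eqref{eq:bdd2.6} to get a one-step recursion in $a_j$; and you make explicit the passage from $g_{B_{2r}}$ (which Lemma~\ref{lem:DG} naturally produces) to the $g_{B_r}$ appearing in \eqref{eq:bdd} via \eqref{eq:hol.a} and $q\le sp+\alpha$ (resp.\ $\eqref{eq:qspa2}_2$), a point the paper leaves implicit.
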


\begin{proof}
For any $j\in\mathbb{N}\cup\{0\}$, we write
\begin{align*}
r_j= (1+2^{-j})r, \quad B_j=B_{r_j}, \quad k_j=(1-2^{-j-1})2k_{0}, \quad w_j=(u-k_j)_+.
\end{align*}
Observe that
\[ r < r_{j+1} < r_{j} < 2r, \quad k_j < k_{j+1}, \quad w_{j+1}\leq w_j. \]

By using Lemma \ref{lem:DG}, we have
\begin{align}\label{eq:bdd2.2}
\begin{split}
\lefteqn{ [w_{j+1}]^p_{W^{s,p}(B_{j+1})}+a^{-}_{B_{2r}}[w_{j+1}]^q_{W^{1,q}(B_{j+1})} }\\
&\leq c\left(\frac{r_{j}}{r_{j}-r_{j+1}}\right)^{n+q}\left[\int_{B_{j}}\left(\frac{w_j}{r_j^s}\right)^p\,dx+a^-_{B_{2r}}\int_{B_{j}}\left(\frac{w_j}{r_j}\right)^q\,dx+\|w_j\|_{L^1(B_{j})}(\tail(w_j;r_j))\right]\\
&\leq c\left(\frac{r_{j}}{r_{j}-r_{j+1}}\right)^{n+q}\left[\int_{B_{j}}G_{B_{2r}}(w_j)\,dx+\left(\int_{B_{j}}w_j\,dx\right)(\tail(w_j;r_j))\right]
\end{split}
\end{align}
for a constant $c=c(\data(B_{2r}))$, where we have also used the relation that 
\begin{align*}
r_{j}\eqsim r\quad\Rightarrow\quad G_{B_{2r}}(t)\eqsim \left(\frac{t}{r^s_{j}}\right)^p+a^{-}_{B_{2r}}\left(\frac{t}{r_{j}}\right)^q\quad \forall\; t\geq 0.
\end{align*}
Now, with $\kappa$ defined in \eqref{eq:exponent}, we use \eqref{eq:fsp} and Sobolev's embedding theorem to find
\begin{align*}
\begin{split}
\lefteqn{ \mean{B_{j+1}}G_{B_{2r}}(w_{j+1})\,dx \le \left(\frac{|A^{+}(k_{j+1},r_{j+1})|}{|B_{j+1}|}\right)^{\frac{1}{\kappa'}}\left(\mean{B_{j+1}}[G_{B_{2r}}(w_{j+1})]^{\kappa}\,dx\right)^{\frac{1}{\kappa}} }\\
& \le c\left(\frac{|A^{+}(k_{j+1},r_{j+1})|}{|B_{j+1}|}\right)^{\frac{1}{\kappa'}}\left(\mean{B_{j+1}}\int_{B_{j+1}}\frac{|w_{j+1}(x)-w_{j+1}(y)|^{p}}{|x-y|^{n+sp}}\,dxdy + a^{-}_{B_{2r}}\mean{B_{j+1}}|Dw_{j+1}|^{q}\,dx\right) \\
& \quad + c\left(\frac{|A^{+}(k_{j+1},r_{j+1})|}{|B_{j+1}|}\right)^{\frac{1}{\kappa'}}\mean{B_{j+1}}G_{B_{2r}}(w_{j+1})\,dx.
\end{split}
\end{align*}
We also observe that 
\begin{equation}\label{eq:bdd2.6}
\begin{aligned}
|A^{+}(k_{j+1},r_{j+1})| & \le \frac{1}{G_{B_{2r}}(k_{j+1}-k_{j})}\int_{A^{+}(k_{j},r_{j})}G_{B_{2r}}(w_{j})\,dx, \\
\mean{B_{j+1}}w_{j+1}\,dx & \le \frac{1}{g_{B_{2r}}(k_{j+1}-k_{j})}\mean{B_{j}}G_{B_{2r}}(w_{j})\,dx.
\end{aligned}
\end{equation}
Combining \eqref{eq:bdd2.2}-\eqref{eq:bdd2.6}, we find 
\begin{align}\label{eq:bdd2.6.1}
\begin{split}
\mean{B_{j+1}}G_{B_{2r}}(w_{j+1})\,dx & \le \frac{c}{[G_{B_{2r}}(k_{j+1}-k_{j})]^{1/\kappa'}}\left(\frac{r_{j}}{r_{j}-r_{j+1}}\right)^{n+q}\left(1+\frac{\tail(w_{j};r_{j})}{g_{B_{2r}}(k_{j+1}-k_{j})}\right)\\
& \quad \cdot\left(\mean{B_{j}}G_{B_{2r}}(w_{j})\,dx\right)^{1+\frac{1}{\kappa'}}.
\end{split}
\end{align}

Denoting
\begin{align*}
a_j \coloneqq \dfrac{1}{|B_{r}|}\int_{A^{+}(k_{j},r_{j})}G_{B_{2r}}(w_j)\,dx,
\end{align*}
and recalling the definitions of $k_{j}$ and $r_{j}$, we see that \eqref{eq:bdd2.6.1} becomes
\begin{align*}
a_{j+1}\leq \frac{c2^{(n+q)j}}{[G_{B_{2r}}(2^{-j}k_{0})]^{1/\kappa'}}\left(1+\frac{\tail(u_{+};r)}{g_{B_{2r}}(2^{-j}k_{0})}\right)a_{j}^{1+\frac{1}{\kappa'}}.
\end{align*}
Here, if $k_{0}$ is so large that
\begin{equation}\label{eq:k0.1}
\frac{\tail(u_{+};r)}{g_{B_{2r}}(k_{0}/\delta)} \le \frac{\delta^{p}[\tail(u_{+};r)]}{g_{B_{2r}}(k_{0})} \le 1,
\end{equation}
then
\begin{align*}
a_{j+1}\leq\frac{c_{2}2^{\theta j}}{\delta^{p}[G_{B_{2r}}(k_{0})]^{1/\kappa}}a_{j}^{1+\frac{1}{\kappa'}}
\end{align*}
holds for a constant $c_{2} = c_{2}(\data(B_{2r}))$, where
\[ \theta \coloneqq \frac{p}{\kappa'} + n+p+q-1. \]
We now fix
\[ k_{0} = G_{B_{2r}}^{-1}\left[\left(\frac{c_{2}}{\delta^{p}}\right)^{\kappa'}2^{\theta(\kappa')^{2}}\mean{B_{2r}}G_{B_{2r}}(u_{+})\,dx \right] + \delta g_{B_{2r}}^{-1}\left( \tail(u_{+};r)\right). \]
Then \eqref{eq:k0.1} holds, and moreover we can apply Lemma~\ref{lem:it} to conclude that $a_j\rightarrow 0$ as $j\rightarrow\infty$. In turn, an elementary manipulation gives the desired estimate \eqref{eq:bdd}.
\end{proof}

We are now ready to prove Theorem~\ref{thm:har}.
\begin{proof}[Proof of Theorem~\ref{thm:har}]
By translation, we assume that $x_0$ is the origin. 

\textit{Step 1: Tail estimates.} First, we claim that for any $z\in B_{R}$ and $0<r\leq 2R$,
\begin{align}\label{eq:h3}
\tail(u_+;z,r)&\leq cg_{B_r(z)}\left(\sup_{B_r(z)}u\right)+c\tail(u_-;z,r)
\end{align}
holds for a constant $c=c(\data(B_{2R}))$. Indeed, denoting $M \coloneqq \sup_{B_r(z)}u$, we apply \eqref{eq:D} with $k \equiv 2M$ to have
\begin{align}\label{eq:h4}
\begin{split}
I_1 & \coloneqq\int_{B_{r/2}(z)}(u(x)-2M)_-\left[\int_{\ern}\dfrac{(u(y)-2M)_+^{p-1}}{|x-y|^{n+sp}}\,dy\right]\,dx\\
&\leq c\left[ \dfrac{\|(u-2M)_-\|^p_{L^p(B_r(z))}}{r^{sp}}+ a^{-}_{B_r(z)}\dfrac{\|(u-2M)_-\|^q_{L^q(B_r(z))}}{r^q} \right]\\
&\quad+ c\|(u-2M)_-\|_{L^1(B_r(z))}\tail((u-2M)_-;z,r/2) \\
& \eqqcolon I_{2}.
\end{split}
\end{align}
For $I_1$, we first notice that 
\begin{align*}
|x-y|\leq 2|y-z|\quad\text{for any}\quad x\in B_{r}(z)\quad\text{and}\quad y\in\ern\setminus B_{r}(z).
\end{align*}
Also, from \cite[Lemma~4.4]{C1} we obtain
\begin{align*}
(u(y)-2M)^{p-1}_+\geq\min\{1,2^{2-p}\}u_+(y)^{p-1}-2^{p-1}M^{p-1}.
\end{align*}
From the above two observations and the fact that $u\leq M$ on $B_r(z)$, it follows that
\begin{align*}
\begin{split}
\lefteqn{ \int_{B_{\frac{r}{2}}(z)}(u(x)-2M)_-\left[\int_{\ern}\dfrac{(u(x)-2M)^{p-1}_+}{|x-y|^{n+sp}}\,dy\right]\,dx }\\
&\geq 2^{-n-sp}M\int_{B_{\frac{r}{2}}(z)}\left[\int_{\ern\setminus B_r(z)}\dfrac{\min\{1,2^{2-p}\}u_+(y)^{p-1}-2^{p-1}M^{p-1}}{|y-z|^{n+sp}}\,dy\right]\,dx\\
&\geq\dfrac{Mr^{n-sp}}{c}\tail(u_+;z,r)-cr^{n-sp}M^p.
\end{split}
\end{align*}
On the other hand, since $u\geq0$ on $B_r(z)$, we have
\begin{align*}
I_2\leq cr^{n-sp}\left(M^p+a^-_{B_r(z)}r^{sp-q}M^q+M\tail(u_-;z,r)\right).
\end{align*}
Merging the above two estimates together with \eqref{eq:h4} directly gives \eqref{eq:h3} as follows:
\begin{align*}
\tail(u_+;z,r)&\leq cr^{sp}\left(\frac{M^{p-1}}{r^{sp}}+a^-_{B_r(z)}\frac{M^{q-1}}{r^{q}}+\dfrac{1}{r^{sp}}\tail(u_-;z,r)\right)\\
&\leq c\left(g_{B_r(z)}(M)+\tail(u_-;z,r)\right).
\end{align*}

\textit{Step 2: Proof of \eqref{eq:har}.}
With $\delta_1\in(0,1]$ being any number, we use Lemma~\ref{lem:bdd} to have
\begin{align*}
\sup_{B_{r}(z)}u\leq c_{\delta_1}G^{-1}_{B_r(z)}\left(\mean{B_{2r}(z)}G_{B_r(z)}(u)\,dx\right)+\delta_1g^{-1}_{B_r(z)}(\tail(u_+;z,r)),
\end{align*}
where $c=c(\data(B_{2R}))$. Combining this estimate with \eqref{eq:h3}, we find
\begin{align}\label{eq:h9}
\begin{split}
\sup_{B_{r}(z)}u&\leq c_{\delta_1}G^{-1}_{B_r(z)}\left(\mean{B_{2r}(z)}G_{B_r(z)}(u)\,dx\right)+\delta_1g^{-1}_{B_r(z)}\left(g_{B_r(z)}\left(\sup_{B_r(z)}u\right)+\tail(u_-;z,r)\right)\\
&\leq c_{\delta_1}G^{-1}_{B_r(z)}\left(\mean{B_{2r}(z)}G_{B_r(z)}(u)\,dx\right)+c\delta_1\left(\sup_{B_r(z)}u+g^{-1}_{B_r(z)}(\tail(u_-;z,r))\right).
\end{split}
\end{align}
We next recall the exponent $\varepsilon_{0} \in (0,1)$ determined in \eqref{eq:wh1}. Using Jensen's inequality with the convex function $t \mapsto [G_{B_{r}(z)}^{-1}(t)]^{q}$, and then Young inequality with conjugate exponents $q/(q-\varepsilon_{0})$ and $q/\varepsilon_{0}$, where $\ep_0$ is determined in Lemma \ref{lem:wh}, we obtain
\begin{align}\label{eq:h10}
\begin{split}
G^{-1}_{B_{r}(z)}\left(\mean{B_{2r}(z)}G_{B_{r}(z)}(u)\,dx\right) & \le \left(\mean{B_{2r}(z)}u^{q}\,dx\right)^{\frac{1}{q}} \\
& \le \left(\sup_{B_{2r}(z)}u\right)^{\frac{q-\varepsilon_{0}}{q}}\left(\mean{B_{2r}(z)}u^{\varepsilon_{0}}\,dx\right)^{\frac{1}{q}} \\
& \le \delta_{2}\sup_{B_{2r}(z)}u + c_{\delta_{2}}\left(\mean{B_{2r}(z)}u^{\varepsilon_{0}}\,dx\right)^{\frac{1}{\varepsilon_{0}}}
\end{split}
\end{align}
for any $\delta_2>0$. 
Combining \eqref{eq:h9} and \eqref{eq:h10} and taking $\delta_1,\delta_2$ sufficiently small, we obtain
\begin{align}\label{eq:h11}
\sup_{B_{r}(z)}u\leq\frac{1}{2}\sup_{B_{2r}(z)}u+c\left(\mean{B_{2r}(z)}u^{\ep_{0}}\,dx\right)^{\frac{1}{\ep_{0}}}+c[r^{sp}\tail(u_-;z,r)]^{\frac{1}{p-1}},
\end{align}
where we have also used the fact that $g_{B_{r}(z)}^{-1}(t) \le (r^{sp}t)^{\frac{1}{p-1}}$ for any $t \ge 0$.

Now, let $R\leq\rho<\tau\leq 2R$ be fixed. 
By employing \eqref{eq:h11} along with a suitable covering argument, we arrive at
\begin{align*}
\sup_{B_{\rho}}u 
&\leq \frac{1}{2}\sup_{B_{\tau}}u+\dfrac{c}{(\tau-\rho)^{n/q}}\|u\|_{L^{\ep_{0}}(B_{2R})}+c[R^{sp}\tail(u_-;R)]^{\frac{1}{p-1}}.
\end{align*}
Then an application of the technical lemma \cite[Lemma 4.11]{C1} gives
\begin{align*}
\sup_{B_{R}}u\leq c\left(\mean{B_{2R}}u^{\varepsilon_{0}}\,dx\right)^{\frac{1}{\varepsilon_{0}}}+c\left[R^{sp}\tail(u_-;R)\right]^{\frac{1}{p-1}},
\end{align*}
which with \eqref{eq:wh} yields the desired Harnack's inequality \eqref{eq:har}.
\end{proof}

\end{document}